\documentclass{article}
\usepackage{amssymb}
\usepackage{eurosym}
\usepackage{epsfig}
\usepackage{amsmath}
\usepackage{amsfonts}
\usepackage{pgf,tikz}
\usepackage{enumerate}
\usepackage{mathrsfs}
\usepackage{cite}
\usepackage{subfigure}
\usepackage[colorlinks=true, linkcolor=black, citecolor=black, urlcolor=black]{hyperref}
\usepackage[top=2cm, bottom=2.5cm, left=2.5cm, right=2.5cm, headsep=1cm]{geometry}

\newtheorem{theorem}{\sc Theorem}[section]
\newtheorem{proposition}{\sc Proposition}[section]
\newtheorem{lemma}{\sc Lemma}[section]
\newtheorem{definition}{\sc Definition}[section]
\newtheorem{remark}{\sc Remark}[section]
\newtheorem{corollary}{\sc Corollary}[section]
\newtheorem{example}{\sc Example}[section]

\def\qed{\hbox to 0pt{}\hfill$\rlap{$\sqcap$}\sqcup$\medbreak}

\title{ Hybrid and component-wise Leggett-Williams type fixed point theorems in product spaces with applications}

\author{Laura Mª Fern\'andez--Pardo} 

\date{}
\begin{document}
 \maketitle

\begin{center}  {\small CITMAga \& Departamento de Estat\'{\i}stica, An\'alise Matem\'atica e Optimizaci\'on, \\ Universidade de Santiago de Compostela, \\ 15782, Facultade de Matem\'aticas, Campus Vida, Santiago, Spain.\\  Email: laura.fernandez.pardo@usc.es}
\end{center}

\medbreak

\noindent {\it Abstract.} In this paper, we present new multiplicity fixed point theorems for operators acting on Cartesian products of two normed linear spaces. We show that Leggett-Williams type conditions in each component of the system guarantee the existence of nine distinct fixed points, of which four of them are coexistence fixed points, i.e., points with all components nontrivial. In addition, a hybrid approach combining Leggett-Williams conditions in one component with Krasnosel'ski\u{\i} compression-expansion conditions in the other allows us to obtain three fixed points.
As an application, we establish the existence of multiple positive solutions for nonlinear systems of second-order equations with two-point boundary conditions.

\medbreak

\noindent     \textit{2020 MSC:} 47H10, 47H11, 45G15, 34B18.

\medbreak

\noindent     \textit{Key words and phrases.} Coexistence fixed point; fixed point index; Leggett-Williams
fixed-point theorem;  Krasnosel'ski\u{\i} fixed-point theorem; positive solutions; multiple solutions; nonlinear systems. 

\section{Introduction}
The existence and multiplicity of nontrivial solutions for various types of boundary value problems have long been central topics in nonlinear analysis. Among the most effective tools for establishing such results is fixed point theory in cones, with classical results such as the Krasnosel'ski\u{\i} compression-expansion theorem \cite{Kras} and the Leggett-Williams multiplicity theorem \cite{Leggett-Williams}. These results have played a key role in the localization of positive solutions, forming the foundation for many subsequent developments in the field.

In the case of systems, the fixed points guaranteed by both Krasnosel'ski\u{\i} and Leggett-Williams theorems are generally not localized independently in each component, leaving open the possibility that some components may be trivial—a limitation emphasized in \cite{JRL, LFP_JRL, PrecupFPT, PrecupSDC}. To address this issue, a vectorial version of the Krasnosel'ski\u{\i} theorem has been developed in \cite{PrecupFPT, PrecupSDC}, in which conditions are imposed independently on each component. This approach provides sufficient conditions for the existence of fixed points whose components are all nontrivial, referred to as \textit{coexistence fixed points} by Lan \cite{Lan1}. Other works that take advantage of the distinctive features arising from studying a system, rather than a single equation, to obtain results on the existence and localization of solutions include the following \cite{BCP,BGK23,CPR,CZ,InMaRo,INOP}.

It is clear that compression-expansion type results can produce directly multiplicity just by a nesting argument in different conical shells. However, by virtue of the additivity property of the fixed point index, additional solutions can be obtained, as established by the classical Leggett-Williams three-solutions theorem. The main objective of this work is to exploit the full strength of the aforementioned Leggett-Williams theorem to derive multiple solutions in the case of systems, with emphasis on coexistence fixed points. 

First, we establish a hybrid vectorial approach that combines, in a component-wise manner, the conditions of the Krasnosel'ski\u{\i} theorem with those of the Leggett-Williams theorem, yielding three fixed points, two of which are coexistence fixed points. This number of fixed points coincides with that ensured under the classical Leggett-Williams theorem; the difference, however, is that in the context of systems, our result guarantees the proper localization of each component, as mentioned. Second, we present a vectorial version of the classical Leggett-Williams multiplicity result, showing that for operators acting on the product of normed spaces, the number of fixed points grows exponentially with the number of components, highlighting the complexity of multiplicity phenomena in product spaces. In this way, nine fixed points are obtained (instead of three as in the classical setting!), four of which are coexistence fixed points. Our approach is based on the Leray-Schauder fixed point index in retracts.

In the original Leggett-Williams theorem, assumptions on the operator are imposed in sets of the form
\[
\overline{K}_c:=\{u\in K: \|u\|\leq c\}\quad \text{ and } \quad K(\alpha,a,b) := \{ u \in \overline{K}_c : \alpha(u) > a \text{ and } \|u\| \leq b \},
\]
where \(a,b,c \in \mathbb{R}_+ := (0,\infty)\) satisfy \(a < b\leq c\), \(K\) is a cone in the normed space \((X,\|\cdot\|)\) under consideration, and \(\alpha: K \to [0,\infty)\) is a continuous concave functional. The operator $T:\overline{K}_c\to K$ maps elements with $\alpha(u)=a$ outward in the sense that $\alpha(Tu)\ge a$ whereas the entire upper boundary of the domain $\|u\|=c$ ($c\in \mathbb{R}_+$, $b\le c$) is mapped inward, $\|Tu\|\le c$, yielding a compressive result. Applications to boundary value problems of this original result can be found, for instance, in \cite{guolak, kara, Xiaoming}.

In the early 2000s, the works \cite{AA, AH, ADH} extended this theorem by replacing norm-based bounds with convex functionals and by allowing subsets of both boundaries to be mapped inward and outward, thereby establishing an expansive version of the Leggett-Williams theorem and enabling a broader localization of multiple solutions. In this paper, we retain the original Leggett-Williams approach, preserving invariance on the upper boundary, while extending it to sets defined via concave and convex functionals in the line of those works. Specifically, we consider sets of the form
\[
\overline{\Omega}^\varphi_c:=\{u\in K: \varphi(u)\leq c\}\quad \text{ and } \quad \overline{\Omega}^\varphi_c(\alpha,\theta,a,b) := \{ u \in \overline{\Omega}^\varphi_c : \alpha(u) > a \text{ and } \theta(u) \leq b \},
\]
where \(\varphi, \theta:K\to [0,\infty)\) are convex functionals.

Our abstract setting is fairly general and can be applied to a variety of nonlinear systems involving integral or differential equations. We decided to illustrate it by extending to systems the problems studied in the original work of Leggett-Williams \cite{Leggett-Williams} for a single equation.
 
In this way, we investigate the existence of multiple solutions for the system
\begin{equation}
	\label{eq1}
	\left\{
	\begin{aligned}
		& u_1''(t) + f_1(u_1(t), u_2(t)) = 0, \quad t\in[0,1], \\
		& u_2''(t) + f_2(u_1(t), u_2(t)) = 0, \quad t\in[0,1],\\
		& u_1(0) = u_1'(1) = 0 = u_2(0) = u_2'(1),
	\end{aligned}
	\right.
\end{equation}
where the nonlinearities $f_1,f_2:[0,\infty)^2\to[0,\infty)$ are continuous. We first present a three-solutions result, by applying the hybrid Leggett-Williams and Krasnosel'ski\u{\i} approach (Theorem \ref{th_Kras_LW}) under expansive assumptions in the second component. 
It should be noted that, although the classical Leggett-Williams theorem can also guarantee the existence of three solutions for such a system, it does not ensure that at least two of them have both components nontrivial. A further key novelty of our hybrid approach resides in the fact that, whereas the classical theorem requires compressive behavior on the outer boundary, our theorem permits the second component to exhibit expansion, allowing the second nonlinearity to display different types of behavior while still ensuring the multiplicity result.

We also establish an existence result guaranteeing nine solutions, four of which possess nontrivial components, by means of the vectorial version of the Leggett-Williams theorem. This result is illustrated through the analysis of systems of the form \eqref{eq1} with symmetric nonlinearities, namely, those satisfying $f_2(u_1,u_2)=f_1(u_2,u_1)$ (see Example \ref{symmetric_systems} below). 

Finally, we establish the existence of four coexistence solutions for systems of the form
\begin{equation}
	\label{eq2}
	\left\{
	\begin{aligned}
		& \beta_1 u_1''(t) - u_1'(t) + f_1(u_1(t), u_2(t)) = 0, \quad t\in[0,1], \\
		& \beta_2 u_2''(t) - u_2'(t) + f_2(u_1(t), u_2(t)) = 0, \quad t\in[0,1],\\
		& \beta_1 u_1'(0) - u_1(0) = 0, \quad u_1'(1) = 0, \\
		& \beta_2 u_2'(0) - u_2(0) = 0, \quad u_2'(1) = 0,
	\end{aligned}
	\right.
\end{equation}
where $\beta_1,\beta_2>0$ and $f_1,f_2:[0,\infty)^2\to\mathbb{R}$ are continuous functions.  Systems of the form \eqref{eq2} naturally appear in reaction--convection--diffusion models and are relevant in applications such as chemical reactor theory and biological transport processes, see among others \cite{infante2,Leggett-Williams, Deim, varma}.

 The paper is organized as follows. In Section 2, we recall some basic properties of the Leray-Schauder fixed point index in cones and, moreover, we show specific computations of the index for operators defined in Cartesian products, which will be crucial in the proof of our main results. Sections 3 and 4 contain the hybrid and vectorial versions of Leggett-Williams theorem, respectively. Finally, Section 5 is concerned with the existence of multiple positive solutions for systems of second-order equations of the form \eqref{eq1} and \eqref{eq2}.

\section{ Preliminaries on fixed point index and computations}

\subsection{Key properties of the fixed point index}

At this point, we briefly recall some fundamental notions and the key properties of the fixed point index that will be used in our reasoning. Let $U$ be a relatively open bounded subset of a retract $R$ in a normed space $X$, and let $T: \overline{U} \to R$ be a compact operator that has no fixed points on the boundary of $U$ (denoted by $\partial_R U$). Under these conditions, the \emph{fixed point index} of $T$ on $U$ with respect to the retract $R$, denoted by $i_R(T,U)$, is well-defined. For further details, see \cite{amann,guolak}.

	\begin{proposition}
	Let $R$ be a retract of a normed space $X$, $U\subset R$ be a bounded relatively open set and $T:\overline{U}\rightarrow R$ be a compact map such that $T$ has no fixed points on $\partial_{R}\,U$. Then the fixed point index of $T$ on the set $U$ with respect to $R$, $i_{R}(T,U)$, has the following properties:
	\begin{enumerate}
		\item (Additivity) Let $U$ be the disjoint union of two open sets $U_1$ and $U_2$. If $0\not\in(I-T)(\overline{U}\setminus(U_1\cup U_2))$, then \[i_{R}(T,U)=i_{R}(T,U_1)+i_{R}(T,U_2).\]
		\item (Existence) If $i_{R}(T,U)\neq 0$, then there exists $x\in U$ such that $Tx=x$.
		\item (Homotopy invariance) If $H:\overline{U}\times[0,1]\rightarrow R$ is a compact homotopy and $0\not\in(I-H)(\partial_R\,U\times[0,1])$, then
		\[i_{R}(H(\cdot,0),U)=i_{R}(H(\cdot,1),U).\]
		\item (Normalization) If $T$ is a constant map with $Tx=\bar{x}$ for every $x\in\overline{U}$, then
		\[i_{R}(T,U)=\left\{\begin{array}{ll} 1, & \text{ if } \bar{x}\in U, \\ 0, & \text{ if } \bar{x}\not\in\overline{U}. \end{array} \right. \]
	\end{enumerate}
\end{proposition}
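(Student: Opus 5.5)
This proposition collects the standard properties of the Leray--Schauder fixed point index on retracts, so the plan is to recall the construction of $i_R(T,U)$ and derive each property from the corresponding property of the index in the whole space $X$ (itself obtained from Brouwer degree via finite-dimensional approximation), rather than to build anything new. Since $R$ is a retract of $X$, fix a retraction $r:X\to R$. Choose a bounded open set $V\subset X$ with $r^{-1}(U)\cap V$ bounded and containing all fixed points of $T\circ r$. Then define
\[
i_R(T,U):=\operatorname{ind}_X\bigl(T\circ r,\; r^{-1}(U)\cap V\bigr),
\]
where $\operatorname{ind}_X$ is the Leray--Schauder index of the compact map $T\circ r$ on an open bounded subset of $X$. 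A fixed point $x$ of $T\circ r$ satisfies $x=T(rx)\in R$, hence $rx=x$, and so it is a fixed point of $T$ in $\overline U$. Consequently the absence of fixed points on $\partial_R U$ guarantees that $T\circ r$ has no fixed points on the boundary of $r^{-1}(U)\cap V$, and the index is well defined.

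The first genuine step is to show that this definition does not depend on $r$ or on $V$. Independence of $V$ follows from the excision property of the Leray--Schauder index. For two retractions $r_0,r_1$, the plan is to use the homotopy $T\circ r_s$ with $r_s=r_0\circ\bigl((1-s)r_0+s r_1\bigr)$ (or to use the commutativity property of the index). This step is the main obstacle: one must check that the homotopy stays compact and has no fixed points on the boundary, which again reduces to the observation that any fixed point lies in $R$ and therefore in $U$.

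Once the definition is settled, the four properties transfer directly.
\begin{enumerate}
\item \emph{Additivity:} the sets $r^{-1}(U_j)\cap V$ are disjoint open sets whose union contains all fixed points of $T\circ r$ in $r^{-1}(U)\cap V$, so additivity of $\operatorname{ind}_X$ applies.
\item \emph{Existence:} a nonzero index $\operatorname{ind}_X$ yields a fixed point $x$ of $T\circ r$, and by the observation above $x\in U$ is a fixed point of $T$.
\item \emph{Homotopy invariance:} apply the Leray--Schauder homotopy invariance to $H(r(\cdot),s)$. The hypothesis that $H$ has no fixed points on $\partial_R U$ gives the required boundary condition in $X$.
\item \emph{Normalization:} the map $T\circ r$ is constant equal to $\bar x$. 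If $\bar x\in U$, then $\bar x\in r^{-1}(U)$ since $r\bar x=\bar x$, and the index in $X$ is $1$. If $\bar x\notin\overline U$, there are no fixed points, and the index is $0$ by excision.
\end{enumerate}

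In the paper itself, I would simply cite \cite{amann,guolak} for these facts, since they are classical.
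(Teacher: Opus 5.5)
Your proposal is correct and follows the standard construction in the references \cite{amann,guolak} that the paper cites in place of a proof: define the index as the Leray--Schauder index of $T\circ r$ on $r^{-1}(U)\cap V$, and inherit the four properties from the degree. The one step to tighten is independence of the retraction, because the sets $r_s^{-1}(U)$ move with $s$. Either run the homotopy $T\circ r_s$ on $\{x\in V: r_s(x)\in U \text{ for all } s\in[0,1]\}$, which is open by compactness of $[0,1]$ and contains $U$, and then apply excision, or use the commutativity property as you suggest.
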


The following useful fixed point index computation may be derived from \cite[Lemma 2.4.1]{guolak}.

\begin{proposition}
	\label{prop_index}
	Let $R$ be a retract of a normed space $X$ and $R_1$ a bounded retract of $R$.  Let $U$ be a nonempty relatively open subset of $R$ such that $U\subset R_1$. Suppose that $T:R_1\rightarrow R$ is completely continuous, $T(R_1)\subset R_1$ and $T$ has no fixed points on $R_1\setminus U$. Then $i_R(T,U)=1$.
\end{proposition}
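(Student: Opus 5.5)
The plan is to derive the statement from the normalization and homotopy invariance properties, together with the additivity (excision) property of the index. Since $R_1$ is a bounded retract of $R$, and $R$ is a retract of $X$, $R_1$ is itself a retract of $X$. Fix $u_0\in U$; this is possible because $U$ is nonempty.

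\textbf{Step 1 (computing the index on $R_1$).} The natural object is ``$i_{R_1}(T,R_1)$''. I would first show that the index of $T$ on $R_1$ with respect to the retract $R_1$ equals $1$. Consider the homotopy $H(x,t)=tTx+(1-t)u_0$. It need not stay in $R_1$, since $R_1$ is not assumed convex. So instead fix a retraction $\rho:X\to R_1$ and use
\[
H(x,t)=\rho\bigl(tTx+(1-t)u_0\bigr).
\]
Then $H$ is compact, because $T(R_1)$ is relatively compact and $\rho$ is continuous. Moreover $H(\cdot,1)=T$, since $T(R_1)\subset R_1$ and $\rho$ is the identity on $R_1$. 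The boundary of $R_1$ relative to itself is empty, so homotopy invariance applies without any boundary check. By normalization, $H(\cdot,0)\equiv u_0\in R_1$ gives index $1$. Hence $i_{R_1}(T,R_1)=1$.

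\textbf{Step 2 (excision to $U$ relative to $R_1$).} The set $U$ is relatively open in $R$ and contained in $R_1$, hence relatively open in $R_1$. By hypothesis $T$ has no fixed points in $R_1\setminus U$. Additivity with $U_1=U$ and $U_2=\emptyset$ then yields $i_{R_1}(T,U)=i_{R_1}(T,R_1)=1$.

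\textbf{Step 3 (passing from $R_1$ to $R$).} This is the main obstacle: we must show $i_R(T,U)=i_{R_1}(T,U)$. I would use the standard permanence property of the retract index, as in \cite{amann,guolak}. Recall the definition $i_R(T,U)=i_X(T\circ r, r^{-1}(U)\cap B)$ for a retraction $r:X\to R$, where $B$ is a large ball, and the fact that the index is independent of the chosen retraction. Take $\rho$ as a retraction of $X$ onto $R_1$. On $\overline U$ we have $T(\overline U)\subset R_1\subset R$, and every fixed point of $T\circ\rho$ or $T\circ r$ lies in $R_1$.

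Next connect $T\circ r$ and $T\circ \rho$ on a suitable open set $V\subset X$ with $V\cap R=U$ via the homotopy $T\circ\rho\circ((1-t)\,\mathrm{id}+t\,r)$, or simply note that $\rho$ restricted to $R$ is a retraction of $R$ onto $R_1$. The only delicate point is ensuring no fixed points appear on the boundary during the deformation. Any fixed point $x$ satisfies $x=T(\cdot)\in R_1$, hence $r(x)=\rho(x)=x$ and $x\in U$ by the hypothesis on $R_1\setminus U$. So admissibility holds, and the indices coincide. This is essentially \cite[Lemma 2.4.1]{guolak}; citing that lemma directly may be the cleanest way to finish.
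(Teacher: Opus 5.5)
Your argument is correct and is essentially the standard proof behind \cite[Lemma 2.4.1]{guolak}, which the paper cites without giving a proof: index $1$ on $R_1$ via a homotopy to a constant (composed with a retraction onto $R_1$), excision down to $U$, and then the permanence property of the index to pass from $R_1$ to $R$. Step 3 is exactly that permanence property, which applies because $T(\overline U)\subset R_1$ and $R_1$ is a retract of $R$; if you prove it by hand, say explicitly that your homotopy ends at $T\circ\rho\circ r$, and that this agrees with $T\circ r$ on $r^{-1}(\overline U)$ because $\rho$ fixes $R_1\supset\overline U$.
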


Concerning the fixed point index, it is common to work in the context of cones instead of general retracts.

\begin{definition}
 A \emph{cone} in a normed linear space $X$ is a closed convex subset $K$ satisfying $\lambda u \in K$ for every $u \in K$ and all $\lambda \geq 0$, and such that $K \cap (-K) = \{0\}$.
\end{definition}

It is well-known that a cone is a retract due to its closedness and convexity \cite[Corollary 4.2]{Dujun}. 
 
\subsection{Main fixed point index computations in product spaces}

In the sequel, let $(X,\|\cdot\|_1)$ and $(Y,\|\cdot\|_2)$ be normed linear spaces, and let $K_1 \subset X$ and $K_2 \subset Y$ be cones. We set $K := K_1 \times K_2$, which is itself a cone in $X \times Y$. When no ambiguity is expected, both norms will be denoted simply by $\|\cdot\|$.

Let $\alpha,\beta: K_1 \rightarrow [0,\infty)$ be continuous concave and convex functionals on $K_1$, respectively; that is, continuous functions satisfying
\[\begin{aligned}
	& \alpha(\lambda u + (1-\lambda)v) \ge \lambda \alpha(u) + (1-\lambda)\alpha(v), 
	\quad \text{for all } u, v \in K_1 \text{ and } \lambda \in [0,1],\, \text{ and }\\
	& \beta(\lambda u + (1-\lambda)v) \le \lambda \beta(u) + (1-\lambda)\beta(v), 
	\quad \text{for all } u, v \in K_1 \text{ and } \lambda \in [0,1].
\end{aligned}\]
In addition, let $\varphi : K_1 \rightarrow [0,\infty)$ be a continuous convex functional and denote for $a\in\mathbb{R}_+$
\[\begin{aligned}
	& \Omega^{\varphi}_a:=\{u\in K_1: \varphi(u)< a\},\\
	& \overline{\Omega}^{\varphi}_a:=\{u\in K_1: \varphi(u)\leq a\}.
\end{aligned}\]

For $a,b,c\in\mathbb{R}_+$ and arbitrary continuous functionals $\gamma,\theta,\psi:K_1\rightarrow[0,\infty)$, consider the following notation
\[\begin{aligned}
	& K_1(\gamma,\theta,a,b)=\{u\in K_1: a<\gamma(u), \, \theta(u)\leq b\},\\
	& K_1[\gamma,\theta,a,b]=\{u\in K_1: a\leq\gamma(u), \, \theta(u)\leq b\},\\
	& \overline{\Omega}_c^{\psi}(\gamma,\theta,a,b)=\{u\in \overline{\Omega}_c^{\psi}: a<\gamma(u), \, \theta(u)< b\},\\
	& \overline{\Omega}_c^{\psi}[\gamma,\theta,a,b]=\{u\in \overline{\Omega}_c^{\psi}: a\le\gamma(u), \, \theta(u)\le b\},\\
	& \overline{\Omega}_c^{\psi}(\gamma,a)=\{u\in \overline{\Omega}_c^{\psi}:  \gamma(u)< a\},\\
	& \overline{\Omega}_c^{\psi}[\gamma,a]=\{u\in \overline{\Omega}_c^{\psi}:  \gamma(u)\le a\}.
\end{aligned}\]

We introduce here the first fixed point index computation, obtained by combining a Leggett-Williams type condition in one component with the Leray-Schauder condition in the other.

\begin{proposition}
	\label{prop1}
	Let $a,b,c\in \mathbb{R}_+$ be such that $a<b\leq c$, $U\subset K_2$ a bounded relatively open set with $0\in U$. Suppose that $T:\overline{\Omega}^\varphi_c\times\overline{U}\rightarrow K$ is a compact operator satisfying the following conditions
		\begin{enumerate}[(A)]
			\item
			\begin{enumerate}[(a)]
				\item $\{u\in \overline{\Omega}_c^{\varphi}[\alpha,\beta,a,b]\times \overline{U}: \alpha(u_1)>a\}\neq \emptyset$ and $\alpha(T_1u)>a$ for $u\in \overline{\Omega}_c^{\varphi}[\alpha,\beta,a,b]\times U$ with $\alpha(u_1)=a$;
				\item $\varphi(T_1 u)\leq c$ for $u\in K_1[\alpha,\varphi,a,c]\times \overline{U}$;
				\item $\alpha(T_1 u)>a$ for $u\in K_1[\alpha,\varphi,a,c]\times U$ with $\alpha(u_1)=a$ and $\beta(T_1u)>b$,
			\end{enumerate}
			\item $T_2u\neq \lambda u_2$ for $u_1\in K_1[\alpha,\varphi,a,c]$, $u_2\in \partial_{K_2}U$ and $\lambda\geq 1$.
	\end{enumerate}
	Then 
	\[i_{\overline{\Omega}^\varphi_c\times K_2}\left(T,K_1(\alpha,\varphi,a,c)\times U\right)=1.\]
\end{proposition}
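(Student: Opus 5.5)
The plan is to reduce the index on the product to a product of two indices, one per component, after deforming $T$ through admissible homotopies. Concretely, I will build a compact homotopy acting separately in each component and, at the end of the deformation, an operator whose index can be computed with Proposition \ref{prop_index}.

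The first step is a retraction in the first component. Since $\overline{\Omega}^\varphi_c[\alpha,\beta,a,b]$ is nonempty (by (a)), closed and convex (because $\alpha$ is concave and $\beta,\varphi$ are convex), it is a retract of $K_1$. Let $r$ be a retraction of $\overline{\Omega}^\varphi_c$ onto $K_1[\alpha,\varphi,a,c]$; this set is also closed, convex and nonempty, hence a retract. I will work with the auxiliary set $R_1=K_1[\alpha,\varphi,a,c]\times \overline{U}$ and first consider the operator $\tilde T u=(T_1(r(u_1),u_2),T_2(r(u_1),u_2))$. This operator coincides with $T$ on $\overline{K_1(\alpha,\varphi,a,c)}\times\overline U$, so the two indices are equal.

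Next I deform the second component to $0$ via
\[
H(u,t)=\bigl(\tilde T_1 u,\; t\,\tilde T_2 u\bigr),\qquad t\in[0,1].
\]
Condition (B) excludes fixed points with $u_2\in\partial U$: for $t\in(0,1]$, the relation $t\tilde T_2u=u_2$ means $\tilde T_2u=\lambda u_2$ with $\lambda=1/t\ge1$, and for $t=0$ we would have $u_2=0\in U$. The first component needs no fixed points on the relative boundary of $K_1(\alpha,\varphi,a,c)$ inside $\overline{\Omega}^\varphi_c$, which is the set where $\alpha(u_1)=a$.

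This is the main obstacle. Take a fixed point with $\alpha(u_1)=a$ and $u_2\in U$. By (b), $\varphi(u_1)=\varphi(T_1u)\le c$. Two cases arise. If $\beta(T_1u)\le b$, then $u_1\in\overline{\Omega}^\varphi_c[\alpha,\beta,a,b]$, and (a) gives $\alpha(u_1)=\alpha(T_1u)>a$, a contradiction. If $\beta(T_1u)>b$, then (c) gives $\alpha(T_1u)>a$, again a contradiction. Points with $u_2\in\partial U$ are already excluded. So $H$ is admissible and the index equals $i(\,(\tilde T_1,0),K_1(\alpha,\varphi,a,c)\times U)$.

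Finally I compute this last index with Proposition \ref{prop_index}. To make the map self-map a bounded retract, I first deform the first component $\tilde T_1$ within $K_1[\alpha,\varphi,a,c]$. Choose $u_0$ with $\alpha(u_0)>a$ and $\beta(u_0)\le b$, which exists by (a), and use the convex homotopy
\[
(1-s)\tilde T_1 u+s\,u_0 .
\]
Concavity of $\alpha$ and convexity of $\beta$ ensure that boundary points with $\alpha(u_1)=a$ are not fixed: by the same case analysis as above, $\alpha$ of the convex combination is strictly greater than $a$. The endpoint is the constant map $u_0$, and $u_0\in K_1(\alpha,\varphi,a,c)$ provided $\varphi(u_0)\le c$, which holds since $u_0\in\overline{\Omega}^\varphi_c$. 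Normalization then gives $i((u_0,0),K_1(\alpha,\varphi,a,c)\times U)=1$, because $(u_0,0)$ lies in the open set.

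The delicate point is the case $\beta(T_1u)>b$ in this convex homotopy. I expect to handle it as follows. If $\alpha(T_1u)>a$, then the combination has $\alpha>a$ by concavity. If $\alpha(T_1u)\le a$, then (a) forces $\beta(T_1u)>b$, and (c) applies only when $\alpha(u_1)=a$, which is exactly the case here. So every boundary point has $\alpha(T_1u)>a$, and concavity finishes the argument.
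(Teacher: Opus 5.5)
Your argument is correct and is essentially the paper's proof. The paper uses the single homotopy $h(u,t)=\bigl(tT_1u+(1-t)\bar u_1,\,tT_2u\bigr)$ down to the constant $(\bar u_1,0)$ and then normalization. You split this into two stages: first $t\tilde T_2$ in the second component, then the convex deformation in the first. The case split on $\beta(T_1u)$ is the same, using (a), (c), concavity of $\alpha$ and convexity of $\beta$.

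The retraction $r$ and the reference to Proposition~\ref{prop_index} are superfluous. In your last paragraph, (a) by itself only yields $\beta(u_1)>b$. Passing to $\beta(T_1u)>b$ requires convexity of $\beta$ together with $\beta(u_0)\le b$ and $s<1$ (at $s=1$ the fixed-point equation gives $u_1=u_0$, which has $\alpha(u_0)>a$). This is exactly the step the paper makes.
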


\noindent
{\bf Proof.} First, we show that \(T\) has no fixed points on 
\[
\partial_{\overline{\Omega}^\varphi_c\times K_2}
\bigl(K_1[\alpha,\varphi,a,c]\times \overline{U}\bigr)
=
\bigl(\{u_1\in \overline{\Omega}_c^{\varphi}:\alpha(u_1)=a\}\times U\bigr)
\cup
\bigl(K_1[\alpha,\varphi,a,c]\times \partial_{K_2} U\bigr),
\]
which together with $T_1\left(K_1[\alpha,\varphi,a,c]\times \overline{U}\right)\subset\overline{\Omega}_c^\varphi$ as a consequence of $(A)\text{--}(b)$, ensures that the corresponding fixed point index is well defined.

Assume, to the contrary, that there exists 
\(u\in \{u_1\in \overline{\Omega}_c^{\varphi}:\alpha(u_1)=a\}\times U\) 
such that \(Tu=u\). Then \(T_1 u=u_1\), with \(\alpha(u_1)=a\) and 
\(u_2\in U\). If \(u_1\in \overline{\Omega}_c^{\varphi}[\alpha,\beta,a,b]\), then 
\(\alpha(T_1 u)=\alpha(u_1)=a\), contradicting \((A)\text{--}(a)\).
Alternatively, if \(\beta(u_1)>b\), then \(\beta(T_1u)>b\). 
Thus, by \((A)\text{--}(c)\), we obtain \(\alpha(T_1u)>a\), again a contradiction. Moreover, there is clearly no 
\(u\in K_1[\alpha,\varphi,a,c]\times \partial_{K_2} U\) 
such that \(Tu=u\). Indeed, this would imply \(T_2u=u_2\), which is excluded by 
condition~(B) with \(\lambda=1\).

Now, for a fixed
\(\overline{u} \in \overline{\Omega}_c^{\varphi}[\alpha,\beta,a,b]\times \overline{U}\) satisfying 
\(\alpha(\overline{u}_1)>a\), consider the map  
\[
h:K_1[\alpha,\varphi,a,c]\times \overline{U}\times[0,1] \to \overline{\Omega}_c^\varphi\times K_2,
\qquad
h(u_1,u_2,t)=\bigl(t T_1u+(1-t)\overline{u}_1,\; t T_2 u\bigr).
\]
Since $T$ is compact, $h$ is a compact homotopy. Moreover, it contains no points  
\[
(u,t)\in \partial_{\overline{\Omega}^{\varphi}_{c}\times K_2}
\bigl(K_1[\alpha,\varphi,a,c]\times \overline{U}\bigr)\times [0,1]
\]
such that $h(u,t)=u$. 

Indeed, assume that such a point exists with 
$\alpha(u_1) = a$ and $u_2 \in U$. If $\beta(T_1 u) > b$, then \((A)\text{--}(c)\) implies $\alpha(T_1 u) > a$. Therefore, $\alpha(u_1) = \alpha\bigl(t T_1 u + (1-t)\overline{u}_1\bigr)
\ge t\,\alpha(T_1 u) + (1-t)\,\alpha(\overline{u}_1) > a,$ a contradiction. If, on the other hand $\beta(T_1 u) \le b$, then $\beta(u_1) = \beta\bigl(t T_1 u + (1-t)\overline{u}_1\bigr)
\le t\,\beta(T_1 u) + (1-t)\,\beta(\overline{u}_1) \le b$. Hence, $u_1 \in \overline{\Omega}_c^{\varphi}[\alpha,\beta,a,b]$, implying $\alpha(T_1 u) > a$ by \((A)\text{--}(a)\). Consequently, $\alpha(u_1) = \alpha\bigl(t T_1 u + (1-t)\overline{u}_1\bigr) > a,$ which is impossible.
Otherwise, if the point is such that $u_1 \in K_1[\alpha,\varphi,a,c]$
and $u_2 \in \partial_{K_2}\, U$, then we would have $t T_2 u = u_2$. 
For $t = 0$, this gives $u_2 = 0 \in  \partial_{K_2}\,U$, which is not possible since $0\in U$ and $U$ is open. 
For $t \in (0,1]$, we obtain $T_2 u = \frac{1}{t} u_2$, also leading to a contradiction, this time with $(B)$.

Therefore, by the homotopy invariance and normalization properties of the fixed point index, we obtain
\[
i_{\overline{\Omega}^\varphi_c\times K_2}
\bigl(T,\; K_1(\alpha,\varphi,a,c)\times U\bigr)
=
i_{\overline{\Omega}^\varphi_c\times K_2}
\bigl((\overline{u}_1,0),\; K_1(\alpha,\varphi,a,c)\times U\bigr)
= 1.
\]
\qed

\begin{proposition}
	\label{prop2}
	Let $a,b,c\in \mathbb{R}_+$ be such that $a<b\leq c$ and $U\subset K_2$ a bounded relatively open set. Suppose that $T:\overline{\Omega}^\varphi_c\times\overline{U}\rightarrow K$ and $S:K_1[\alpha,\varphi,a,c]\times \overline{U}\rightarrow K_2$ are compact operators satisfying the following conditions
	\begin{enumerate}[(A)]
		\item
		\begin{enumerate}[(a)]
			\item $\{u\in \overline{\Omega}_c^{\varphi}[\alpha,\beta,a,b]\times \overline{U}: \alpha(u_1)>a\}\neq \emptyset$ and $\alpha(T_1u)>a$ for $u\in \overline{\Omega}_c^{\varphi}[\alpha,\beta,a,b]\times U$ with $\alpha(u_1)=a$;
			\item $\varphi(T_1 u)\leq c$ for $u\in K_1[\alpha,\varphi,a,c]\times \overline{U}$;
			\item $\alpha(T_1 u)>a$ for $u\in K_1[\alpha,\varphi,a,c]\times U$ with $\alpha(u_1)=a$ and $\beta(T_1u)>b$,
		\end{enumerate}
		\item 
		\begin{enumerate}
			\item $\inf_{u\in K_1[\alpha,\varphi,a,c]\times \overline{U}}\|Su\|>0$;
			\item $x_2-T_2 u\neq \mu S u$ for $u_1\in K_1[\alpha,\varphi,a,c]$, $u_2\in\partial_{K_2} U$ and $\mu\geq 0$.
		\end{enumerate}
	\end{enumerate}
	Then
	\[i_{\overline{\Omega}^\varphi_c\times K_2}\left(T,K_1(\alpha,\varphi,a,c)\times U\right)=0.\]
\end{proposition}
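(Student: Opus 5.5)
The plan follows the proof of Proposition \ref{prop1}: the first component keeps the same Leggett-Williams homotopy toward a fixed point $\overline{u}_1$, while the second component is pushed in the direction of $S$ until no fixed points remain. (I read the $x_2$ in (B)--(b) as $u_2$, so (B)--(b) says $u_2-T_2u\neq\mu Su$.) First, exactly as in Proposition \ref{prop1}, condition (A) shows that $T$ has no fixed points with $\alpha(u_1)=a$, $u_2\in U$. Condition (B)--(b) with $\mu=0$ excludes fixed points with $u_2\in\partial_{K_2}U$. Together with $T_1\bigl(K_1[\alpha,\varphi,a,c]\times\overline U\bigr)\subset\overline{\Omega}^\varphi_c$, which follows from (A)--(b), the index is well defined.

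Second, fix $\overline u$ as in (A)--(a) with $\alpha(\overline u_1)>a$. Consider the compact homotopy
\[
h(u,t)=\bigl(tT_1u+(1-t)\overline u_1,\;T_2u\bigr),
\]
whose second component is unchanged. The argument of Proposition \ref{prop1} rules out fixed points with $\alpha(u_1)=a$, $u_2\in U$; that argument uses only (A) and the concavity/convexity of $\alpha$ and $\beta$. Fixed points with $u_2\in\partial_{K_2}U$ are ruled out by (B)--(b) with $\mu=0$. Hence
\[
i(T,\cdot)=i\bigl((\overline u_1,T_2),\cdot\bigr),
\]
where both indices are over $K_1(\alpha,\varphi,a,c)\times U$ relative to $\overline{\Omega}^\varphi_c\times K_2$.

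Third, deform along $G(u,\mu)=(\overline u_1,\,T_2u+\mu Su)$ for $\mu\in[0,M]$. This is compact because $S$ is compact and $\mu$ ranges over a bounded interval. Boundary fixed points cannot occur. If $\alpha(u_1)=a$, the first component gives $u_1=\overline u_1$, but $\alpha(\overline u_1)>a$. If $u_2\in\partial_{K_2}U$, the second component would give $u_2-T_2u=\mu Su$, contradicting (B)--(b). Therefore $i\bigl((\overline u_1,T_2),\cdot\bigr)=i\bigl(G(\cdot,M),\cdot\bigr)$.

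Finally, I need $M$ so large that $G(\cdot,M)$ has no fixed points at all in $K_1[\alpha,\varphi,a,c]\times\overline U$; the existence property then gives index $0$. A fixed point would satisfy $u_2=T_2u+MSu$. Then
\[
M\delta\le M\|Su\|\le\|u_2\|+\|T_2u\|\le \sup_{\overline U}\|u_2\|+\sup\|T_2u\|,
\]
with $\delta:=\inf\|Su\|>0$ from (B)--(a). The right-hand side is finite because $U$ is bounded and $T$ is compact, hence has bounded range. Choosing $M$ larger than this bound divided by $\delta$ gives the contradiction.

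This last step is where (B)--(a) is essential, and I expect it to be the main point requiring care. Two details must be checked: that $G(\cdot,M)$ maps into $\overline{\Omega}^\varphi_c\times K_2$, which holds because $\varphi(\overline u_1)\le c$ and $S$ takes values in $K_2$; and that $S$ is only defined on $K_1[\alpha,\varphi,a,c]\times\overline U$, which matches the closure of the set where we compute the index.
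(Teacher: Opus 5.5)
Your proof is correct and follows the paper's argument. The paper uses the single homotopy $h(u,t)=\bigl(tT_1u+(1-t)\overline{u}_1,\;T_2u+(1-t)\mu_0Su\bigr)$ with $\mu_0>\bigl(\sup_{\overline{U}}\|u_2\|+\sup\|T_2u\|\bigr)/\inf\|Su\|$, which merges your two successive deformations into one, and it finishes with the same norm estimate showing that the endpoint map has no fixed points (your reading of $x_2$ as $u_2$ in (B)--(b) is the intended one).
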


\noindent
{\bf Proof.} As in the previous result, \(T\) has no fixed points on the boundary of
\(K_1[\alpha,\varphi,a,c]\times \overline{U}\). Therefore, the fixed point index is well defined.

Consider the following positive numbers
\[
\alpha := \displaystyle \sup_{u_2 \in \overline{U}} \|u_2\|, \quad
\beta := \displaystyle \sup_{u \in K_1[\alpha,\varphi,a,c] \times \overline{U}} \|T_2 u\|, \quad
\gamma := \displaystyle \inf_{u \in K_1[\alpha,\varphi,a,c] \times \overline{U}} \|Su\|.
\]

Choose $\overline{u} \in \overline{\Omega}_c^{\varphi}[\alpha,\beta,a,b]\times\overline{U}$ satisfying $\alpha(\overline{u}_1) > a$ as well as
$\mu_0 > (\alpha + \beta)/\gamma$. We introduce the mapping
\[
h :  K_1[\alpha,\varphi,a,c] \times \overline{U} \times[0,1]\to \overline{\Omega}_c^\varphi\times K_2, 
\qquad
h(u_1,u_2,t) = \bigl(t T_1 u + (1-t) \overline{u}_1,\; T_2 u + (1-t)\mu_0 S u\bigr).
\]

Again, $h$ is an admissible compact homotopy. That is, there exists no points $(u,t)$ such that $u\in \partial_{\overline{\Omega}^{\varphi}_{c} \times K_2} 
\bigl(K_1[\alpha,\varphi,a,c]\times \overline{U}\bigr)$ and $t\in[0,1]$ satisfying $h(u,t) = u$. Arguing as in the previous proof, there are no such points with 
$\alpha(u_1) = a$ and $u_2 \in U$. 
Similarly, there are no such points satisfying 
$u_1 \in K_1[\alpha,\varphi,a,c]$ and $u_2 \in \partial_{K_2} U$, 
because in this case we would have $u_2 = T_2 u + (1-t)\mu_0 S u,$ which contradicts condition $(B)$.

Applying then the homotopy invariance property of the fixed point index, we have
\[
i_{\overline{\Omega}^\varphi_c \times K_2}
\bigl(T,\; K_1(\alpha,\varphi,a,c) \times U\bigr)
=
i_{\overline{\Omega}^\varphi_c \times K_2}
\bigl(H(\cdot,0),\; K_1(\alpha,\varphi,a,c) \times U\bigr).
\]

Moreover, $H(\cdot,0)$ has no fixed points in $K_1(\alpha,\varphi,a,c) \times U$. 
Indeed, suppose that there exists $(u_1,u_2)$ such that $u_1 = \overline{u}_1$ and 
$u_2 = T_2(\overline{u}_1,u_2) + \mu_0 S(\overline{u}_1,u_2) $. Then
\[
\mu_0 = \frac{\|u_2 - T_2(\overline{u}_1,u_2)\|}{\|S(\overline{u}_1,u_2)\|} 
\le \frac{\alpha + \beta}{\gamma},
\]
which yields a contradiction. 

The conclusion now follows from the existence property of the fixed point index.
 \qed
 
 \begin{remark}
 	\label{remark1}
 	Note that, in the previous proposition, if $\partial_{K_2} U$ is a retract of $\overline{U}$, we can rewrite the hypotheses by defining $S$ over the set $K_1[\alpha,\varphi,a,c]\times \partial_{K_2}U$ and changing $(B)\text{--}(a)$ by
 	\[\inf_{u\in K_1[\alpha,\varphi,a,c]\times \partial_{K_2}U}\|Su\|>0.\]
 \end{remark}
 
 \begin{proposition}
 	\label{prop3}
Let $c,d\in \mathbb{R}_+$ be such that $d\leq c$ and $\varphi,\psi:K_1\rightarrow[0,+\infty)$ continuous convex funtionals. Additionally, let $U\subset K_2$ be a relatively open bounded subset of $K_2$ and $T:\overline{\Omega}^\varphi_c[\psi,d]\times \overline{U}\rightarrow\overline{\Omega}^\varphi_c\times K_2$ a compact operator such that $T_1(\overline{\Omega}^\varphi_c[\psi,d]\times U)\subset\overline{\Omega}^\varphi_c(\psi,d)$.
\begin{enumerate}[(i)]
	\item If $0\in U$ and
	\begin{equation}
		\label{ec1}
		T_2 u\neq \lambda u_2, \text{ for } u_1\in\overline{\Omega}^\varphi_c[\psi,d], \, u_2\in \partial_{K_2}U \text{ and }\lambda\ge1,
	\end{equation}
	then 
	\[i_{\overline{\Omega}^\varphi_c\times K_2}\left(T,\overline{\Omega}^\varphi_c(\psi,d)\times U\right)=1.\]
\item If there exists $S:\overline{\Omega}^\varphi_c[\psi,d]\times \overline{U}\rightarrow K_2$ a compact operator such that
	\begin{enumerate}[(a)]
	\item $\inf_{u\in \overline{\Omega}^\varphi_c[\psi,d]\times \overline{U}}\|Su\|>0$;
	\item $u_2-T_2 u\neq \mu S u$ for $u_1\in \overline{\Omega}^\varphi_c[\psi,d]$, $u_2\in\partial_{K_2} U$ and $\mu\geq 0$,
\end{enumerate}
then
	\[i_{\overline{\Omega}^\varphi_c\times K_2}\left(T,\overline{\Omega}^\varphi_c(\psi,d)\times U\right)=0.\]
\end{enumerate}
 \end{proposition}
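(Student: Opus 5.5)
The plan follows the proofs of Propositions \ref{prop1} and \ref{prop2}, with a fixed point of the first component as the anchor for the homotopy. The set $\overline{\Omega}^\varphi_c[\psi,d]=\{u_1\in K_1:\varphi(u_1)\le c,\ \psi(u_1)\le d\}$ is convex, because $\varphi$ and $\psi$ are convex. Its relative boundary inside $\overline{\Omega}^\varphi_c$ is contained in $\{\psi(u_1)=d\}$.

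\textbf{Well-definedness.} I first check that the index is defined. The relevant boundary is
\[
\bigl(\{u_1\in\overline{\Omega}^\varphi_c:\psi(u_1)=d\}\times \overline{U}\bigr)\cup\bigl(\overline{\Omega}^\varphi_c[\psi,d]\times\partial_{K_2}U\bigr).
\]
A fixed point with $\psi(u_1)=d$ and $u_2\in U$ is impossible, since $T_1u\in\overline{\Omega}^\varphi_c(\psi,d)$ forces $\psi(T_1u)<d$. A fixed point with $u_2\in\partial_{K_2}U$ is excluded by \eqref{ec1} with $\lambda=1$ in case (i), and by (b) with $\mu=0$ in case (ii).

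\textbf{Choice of anchor.} Fix $\overline{u}_1$ with $\psi(\overline{u}_1)<d$ and $\varphi(\overline{u}_1)\le c$. Any point of the form $T_1(\cdot,u_2)$ with $u_2\in U$ will do; such points exist whenever $U\neq\emptyset$, and if $U=\emptyset$ the index is trivially $0$. The statement does not say $0\in\overline{\Omega}^\varphi_c(\psi,d)$, so I avoid using $0$ as the anchor.

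\textbf{Case (i).} Use the homotopy
\[
h(u,t)=\bigl(tT_1u+(1-t)\overline{u}_1,\ tT_2u\bigr).
\]
Suppose $h(u,t)=u$ with $\psi(u_1)=d$ and $u_2\in U$. Convexity gives $\psi(u_1)\le t\psi(T_1u)+(1-t)\psi(\overline{u}_1)<d$, a contradiction. The same convexity argument with $\varphi$ shows that $h$ takes values in $\overline{\Omega}^\varphi_c\times K_2$. Suppose instead $u_2\in\partial_{K_2}U$. If $t=0$ then $u_2=0\in U$, which is impossible since $U$ is open. If $t>0$ then $T_2u=u_2/t$, which contradicts \eqref{ec1}. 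Homotopy invariance and normalization then give index $1$, because $(\overline{u}_1,0)\in\overline{\Omega}^\varphi_c(\psi,d)\times U$.

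\textbf{Case (ii).} Set $A=\sup_{\overline{U}}\|u_2\|$, $B=\sup\|T_2u\|$ and $\gamma=\inf\|Su\|>0$, and pick $\mu_0>(A+B)/\gamma$. Use the homotopy
\[
h(u,t)=\bigl(tT_1u+(1-t)\overline{u}_1,\ T_2u+(1-t)\mu_0Su\bigr).
\]
On $\{\psi(u_1)=d\}$ the first component excludes fixed points exactly as in case (i). On $\partial_{K_2}U$ a fixed point would give $u_2-T_2u=(1-t)\mu_0Su$, which contradicts (b). At $t=0$, a fixed point would satisfy $\mu_0\|Su\|=\|u_2-T_2u\|\le A+B$, which contradicts the choice of $\mu_0$. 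So $h(\cdot,0)$ has no fixed points, its index is $0$, and homotopy invariance gives $0$ for $T$.

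\textbf{Main obstacle.} The delicate point is the bookkeeping around the anchor $\overline{u}_1$. One needs it to lie in the open part $\psi<d$ and the homotopy to stay inside the retract $\overline{\Omega}^\varphi_c\times K_2$, and the convexity of $\psi$ and $\varphi$ handles both. The remaining steps are routine adaptations of the proofs of Propositions \ref{prop1} and \ref{prop2}.
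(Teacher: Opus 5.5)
Your proof is correct, and the second-component arguments are the same as the paper's. The first component is handled differently.

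\textbf{The paper's route.} It first turns the invariance hypothesis into the Leray--Schauder-type condition $T_1u\neq\lambda u_1$ for $\psi(u_1)=d$, $u_2\in U$, $\lambda\ge 1$, via $\psi(\tfrac{1}{\lambda}T_1u)\le\tfrac{1}{\lambda}\psi(T_1u)$. It then contracts the first component radially to the origin. In (i) it uses $H_{(i)}(u,t)=tTu$ and closes with normalization at $(0,0)$. In (ii) it uses $H_{(ii)}(u,t)=(tT_1u,\,T_2u+(1-t)\mu_0Su)$.

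\textbf{Your route.} You use the convex-combination homotopy toward an anchor $\overline{u}_1\in\overline{\Omega}^\varphi_c(\psi,d)$ taken in the range of $T_1$, as in the proof of Proposition~\ref{prop1}. Convexity of $\psi$ and $\varphi$ then keeps the homotopy off $\{\psi=d\}$ and inside the retract.

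\textbf{What each buys.} Your version needs nothing about the origin. The paper's argument tacitly uses $\psi(0)=0$, or at least $\psi(0)<d$, together with $\varphi(0)\le c$. It needs these to get the scaling inequality, to keep $tT_1u$ in $\overline{\Omega}^\varphi_c$, to exclude $t=0$, and for normalization. These hold for the norms used in the applications, but they are not among the stated hypotheses. Conversely, the paper's route only uses the Leray--Schauder condition on $\{\psi(u_1)=d\}$, which is what justifies the weakening in the remark after the proposition. Your argument genuinely needs $\psi(T_1u)<d$ there.

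\textbf{A small point.} Your anchor also requires $\overline{\Omega}^\varphi_c[\psi,d]\neq\emptyset$, not only $U\neq\emptyset$. This is harmless: (i) tacitly presupposes it, and in (ii) an empty set makes the index trivially $0$.
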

 
 \noindent
 {\bf Proof.} Note that, since $T_1(\overline{\Omega}^\varphi_c[\psi,d] \times U) \subset \overline{\Omega}^\varphi_c(\psi,d)$, we have
 \begin{equation}
 	\label{ec2}
 	T_1 u \neq \lambda u_1 \quad \text{for all } u \in \overline{\Omega}^\varphi_c \times U \text{ with } \psi(u_1) = d \text{ and } \lambda \ge 1.
 \end{equation}
 Indeed, if the contrary were true, there would exist $u \in \overline{\Omega}^\varphi_c[\psi,d] \times U$ and $\lambda \ge 1$ such that $\psi(u_1) = d$ and $T_1 u = \lambda u_1$. Then, using that $\psi$ is convex, we would obtain
 \[
 d = \psi(u_1) = \psi\left(\frac{1}{\lambda} T_1 u \right) \leq \frac{1}{\lambda} \psi(T_1 u),
 \]
 which implies $\psi(T_1 u) \geq \lambda d \geq d$, contradicting the fact that $T_1 u \in \overline{\Omega}^\varphi_c(\psi,d)$.
 
 For the proof of $(i)$, we introduce the mapping
 \[
 H_{(i)}: \overline{\Omega}^\varphi_c[\psi,d]\times \overline{U} \times [0,1] \longrightarrow \overline{\Omega}^\varphi_c \times K_2, 
 \quad H_{(i)}(u,t) = t \, T u.
 \]
 This defines a compact homotopy such that $H_{(i)}(u,t) \neq u$ for all $u \in \partial_{\overline{\Omega}^\varphi_c \times K_2}(\overline{\Omega}^\varphi_c[\psi,d]\times \overline{U})$ and $t \in [0,1]$. Suppose, by contradiction, that there exists $u \in (\{u_1\in\overline{\Omega}_c^\varphi:\psi(u_1)=d\} \times U) \cup (\overline{\Omega}^\varphi_c[\psi,d]\times \partial_{K_2} U)$ and $t \in [0,1]$ such that $u = t \, T u$. If $t = 0$, then $u = 0$, which is impossible since both $\overline{\Omega}^\varphi_c(\psi,d)$ and $U$ are relatively open sets containing the origin. If instead $t \in (0,1]$, then $T u = \frac{1}{t} u$, which leads to a contradiction with \eqref{ec1} if $u \in\overline{\Omega}^\varphi_c[\psi,d] \times \partial_{K_2} U$, or with \eqref{ec2} if $u \in\{u_1\in\overline{\Omega}_c^\varphi:\psi(u_1)=d\}  \times U$. Consequently, by the homotopy invariance and normalization properties of the fixed point index, we obtain 
 \[
 i_{\overline{\Omega}^\varphi_c \times K_2} \left(T,\overline{\Omega}^\varphi_c(\psi,d)\times U\right)
 = i_{\overline{\Omega}^\varphi_c \times K_2} \left(H_{(i)}(\cdot, 0), \overline{\Omega}^\varphi_c(\psi,d)\times U\right)
 = 1.
 \]
 
 Concerning $(ii)$, an analogous reasoning applies. Let
 \[\alpha := \displaystyle \sup_{u_2 \in \overline{U}} \|u_2\|, \quad
 \beta := \displaystyle \sup_{u \in\overline{\Omega}^\varphi_c[\psi,d] \times \overline{U}} \|T_2 u\|, \quad
 \gamma := \displaystyle \inf_{u \in \overline{\Omega}^\varphi_c[\psi,d] \times \overline{U}} \|Su\|.\]
 
  Consider the compact homotopy
 \[
 H_{(ii)}: \overline{\Omega}^\varphi_c[\psi,d] \times \overline{U} \times [0,1] \longrightarrow \overline{\Omega}^\varphi_c \times K_2, 
 \quad H_{(ii)}(u,t) = \bigl(t T_1 u,\, T_2 u + (1-t) \mu_0 S u\bigr),
 \]
 where $\mu_0 > (\alpha + \beta)/\gamma$. It is straightforward to verify, by following the proof of Proposition \ref{prop2}, that $H_{(ii)}$ is admissible and $H_{(ii)}(\cdot,0)$ has no fixed points in $\overline{\Omega}^\varphi_c(\psi,d)\times U$. Therefore, the existence property together with the homotopy invariance of the fixed point index yields
 \[
 i_{\overline{\Omega}^\varphi_c \times K_2} \left(T,\overline{\Omega}^\varphi_c(\psi,d) \times U\right)
 = i_{\overline{\Omega}^\varphi_c \times K_2} \left(H_{(ii)}(\cdot,0), \overline{\Omega}^\varphi_c(\psi,d)\times U\right)
 = 0.
 \]
 \qed
 
 \begin{remark}
Note that in Proposition~\ref{prop3} the condition
$T_1(\overline{\Omega}^\varphi_c[\psi,d]\times U) \subset \overline{\Omega}^\varphi_c(\psi,d)$
can be weakened by assuming \eqref{ec2}. Nevertheless, in the subsequent sections we shall apply the proposition under its original hypotheses, as stated.

Moreover, when $c = d$ and $\psi = \varphi$, the condition $T_1(\overline{\Omega}^\varphi_c[\psi,d] \times U) \subset \overline{\Omega}^\varphi_c(\psi,d)$, which is equivalent to $T_1(\overline{\Omega}^\varphi_c \times U) \subset \Omega^\varphi_c$ is not necessary. Since we are computing the fixed point index with respect to $\overline{\Omega}^\varphi_c$ in the first component, it suffices to assume that $T_1(\overline{\Omega}^\varphi_c \times U) \subset \overline{\Omega}^\varphi_c$.

 \end{remark}
 
 \section{Multiple fixed points in product spaces via a combination of Krasnosel’ski\u{\i} and Leggett-Williams conditions}
 
Our first result in this section provides sufficient conditions ensuring the existence of at least one fixed point with all components nontrivial, via a computation of the fixed point index. It will play a key role in establishing our first multiplicity theorem, where conditions combining Krasnosel'ski\u{\i}-type assumptions on one component with Leggett-Williams conditions on the other guarantee the existence of at least three fixed points, two of which have both components nontrivial.

Throughout this section, we work under the notation and assumptions introduced in the previous section.
In addition, for a given $r\in\mathbb{R}_+$, we introduce the following notation
\[\begin{aligned}
	&\overline{\Omega}_r^{\|\cdot\|_2}:=\{u\in K_2: \, \|u\|\leq r\},\\
	& \Omega_r^{\|\cdot\|_2}:=\{u\in K_2: \, \|u\|< r\}.
\end{aligned}\]
More generally, from now on, given a functional $\eta: K_2 \to \mathbb{R}$, we will denote 
\[\begin{aligned}
	&\overline{\Omega}_r^{\eta}:=\{u\in K_2: \, \eta(u)\leq r\},\\
	& \Omega_r^{\eta }:=\{u\in K_2: \, \eta(u)< r\}.
\end{aligned}\]
Observe that we previously used a similar notation for operators on $K_1 \subset X$. Here, we specify $\|\cdot\|_2$ in $\overline{\Omega}_r^{\|\cdot\|_2}$ and $\Omega_r^{\|\cdot\|_2}$ to indicate that the functional is defined on $K_2 \subset Y$, avoiding ambiguity.

Let $r,R\in \mathbb{R}_+$ with $r<R$ and consider also the following sets
\[\begin{aligned}
&(\overline{K}_2)_{r,R}:=\{u\in K_2: \, r\leq \|u\|\leq R\},\\
& (K_2)_{r,R}:=\{u\in K_2: \, r< \|u\|< R\}.
\end{aligned}\]
We note that $(\overline{K}_2)_{r,R}$ is a retract of $\overline{\Omega}^{\|\cdot\|_2}_R$ (see \cite{JRL, LFP_JRL} for an explicit construction), which will be used in the following fixed point result.

\begin{proposition}
	\label{Prop_Kras_LW}
	Let $a,b,c,r,R\in \mathbb{R}_+$ be such that $a<b\leq c$ and $r<R$. Suppose that $T:\overline{\Omega}^{\varphi}_c\times(\overline{K}_2)_{r,R}\rightarrow K$ is a compact operator satisfying the following conditions
	\begin{enumerate}[(A)]
		\item
		\begin{enumerate}[(a)]
			\item $\{u\in \overline{\Omega}_c^\varphi[\alpha,\beta,a,b]\times (\overline{K}_2)_{r,R}: \alpha(u_1)>a\}\neq \emptyset$ and $\alpha(T_1u)>a$ for $u\in \overline{\Omega}_c^\varphi[\alpha,\beta,a,b]\times (\overline{K}_2)_{r,R}$ with $\alpha(u_1)=a$;
			\item $\varphi(T_1 u)\leq c$ for $u\in K_1[\alpha,\varphi,a,c]\times (\overline{K}_2)_{r,R}$;
			\item $\alpha(T_1 u)>a$ for $u\in K_1[\alpha,\varphi,a,c]\times (\overline{K}_2)_{r,R}$ with $\alpha(u_1)=a$ and $\beta(T_1u)>b$,
		\end{enumerate}
		\item For $u\in K_1[\alpha,\varphi,a,c]\times(\overline{K}_2)_{r,R}$
		\begin{enumerate}[(i)]
			\item 	$\|T_2 u\|>\|u_2\|$ if $\|u_2\|=r$ and $\|T_2 u\|<\|u_2\|$ if $\|u_2\|=R$; or
			\item	$\|T_2 u\|<\|u_2\|$ if $\|u_2\|=r$ and $\|T_2 u\|>\|u_2\|$ if $\|u_2\|=R$.
		\end{enumerate}
	\end{enumerate}
Then
\[
i_{\overline{\Omega}^\varphi_c \times K_2}\bigl(T, K_1(\alpha,\varphi,a,c) \times (K_2)_{r,R}\bigr)
= (-1)^s,
\qquad
s =\left\{\begin{array}{ll} 0, & \quad \text{if $(B)\text{--}(i)$ holds}, \\ 	1, & \quad \text{if $(B)\text{--}(ii)$ holds}. \end{array} \right. 
\]
In either case, $T$ admits a fixed point $(u_1,u_2)$ in $K_1(\alpha,\varphi,a,c) \times (K_2)_{r,R}$.
\end{proposition}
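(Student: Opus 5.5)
Following the classical proof of the Krasnosel'ski\u{\i} theorem via additivity, we extend $T$ from $\overline{\Omega}^{\varphi}_c\times(\overline{K}_2)_{r,R}$ to $\overline{\Omega}^{\varphi}_c\times\overline{\Omega}^{\|\cdot\|_2}_R$. Let $\rho:\overline{\Omega}^{\|\cdot\|_2}_R\to(\overline{K}_2)_{r,R}$ be a retraction, and set $\widetilde T(u_1,u_2):=T(u_1,\rho(u_2))$. Then $\widetilde T$ is compact and coincides with $T$ whenever $r\le\|u_2\|\le R$. The plan is to compute two indices of $\widetilde T$, one on $K_1(\alpha,\varphi,a,c)\times\Omega^{\|\cdot\|_2}_R$ and one on $K_1(\alpha,\varphi,a,c)\times\Omega^{\|\cdot\|_2}_r$, using Propositions~\ref{prop1} and~\ref{prop2} with $U=\Omega^{\|\cdot\|_2}_R$ and $U=\Omega^{\|\cdot\|_2}_r$. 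The index on the annulus $K_1(\alpha,\varphi,a,c)\times(K_2)_{r,R}$ is then obtained by additivity as their difference.

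\textbf{Admissibility.} Before applying additivity, we check that $\widetilde T$ has no fixed points on $\{\alpha(u_1)=a\}\times\overline{\Omega}^{\|\cdot\|_2}_R$, where the $(A)$ conditions hold for $\widetilde T$, nor with $\|u_2\|\in\{r,R\}$. Fixed points with $\alpha(u_1)=a$ are excluded exactly as in the first paragraph of the proof of Proposition~\ref{prop1}. The conditions $(A)$--$(a)$--$(c)$ transfer to $\widetilde T$ because $\rho(u_2)\in(\overline{K}_2)_{r,R}$. The non-emptiness requirement in $(A)$--$(a)$ also transfers, since a point of $(\overline{K}_2)_{r,R}$ lies in $\overline{\Omega}^{\|\cdot\|_2}_R$. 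Fixed points on $\|u_2\|=r$ or $\|u_2\|=R$ are excluded by the strict inequalities in $(B)$.

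\textbf{Case $(B)$--$(i)$.} On $\|u_2\|=R$ we have $\|\widetilde T_2u\|<R$. Hence $\widetilde T_2u\neq\lambda u_2$ for all $\lambda\ge1$, since otherwise $\|\widetilde T_2 u\|=\lambda R\ge R$. This is condition $(B)$ of Proposition~\ref{prop1} with $U=\Omega^{\|\cdot\|_2}_R\ni 0$, so the index on $K_1(\alpha,\varphi,a,c)\times\Omega_R^{\|\cdot\|_2}$ equals $1$.

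On $\|u_2\|=r$ we apply Proposition~\ref{prop2} with $U=\Omega^{\|\cdot\|_2}_r$. This requires a compact $S$ with positive infimum of norms and with $u_2-\widetilde T_2u\neq\mu Su$ for $\|u_2\|=r$. The choice $Su:=\widetilde T_2u$ will not work for $\mu$ with $u_2=(1+\mu)\widetilde T_2 u$, because that would give $\|\widetilde T_2u\|\le r$, contradicting $(B)$--$(i)$. Instead I would take $S$ constant, $Su\equiv e$ with $e\in K_2\setminus\{0\}$ fixed; this is compact with $\inf\|Su\|>0$. It then remains to exclude $u_2=\widetilde T_2u+\mu e$ with $\|u_2\|=r$.

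This is the main obstacle: with a general norm, $\|\widetilde T_2u+\mu e\|$ need not exceed $\|\widetilde T_2u\|$. The remedy I would try first is to replace the constant map by a homotopy on the norm scale. Use $h(u,t)=\bigl(tT_1u+(1-t)\overline u_1,\;\widetilde T_2u\,\cdot\,\sigma(t)\bigr)$, with $\sigma$ increasing from $1$ to a value large enough that the scaled map $\sigma\widetilde T_2$ has norm exceeding $r$ on $\overline{\Omega}_r^{\|\cdot\|_2}$. Admissibility is preserved on $\|u_2\|=r$, since there $\|\sigma\widetilde T_2u\|\ge\|\widetilde T_2u\|>r$. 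The end map then has no fixed points in $\Omega_r^{\|\cdot\|_2}$, which requires a uniform lower bound $\inf\|\widetilde T_2\|>0$ on $\overline{\Omega}_r$. By compactness and $\rho$, this infimum is attained over $(\overline{K}_2)_{r,R}$-values. If it were zero, I would instead exploit Remark~\ref{remark1}, using $S=\widetilde T_2$ restricted to $\|u_2\|=r$ and building the homotopy there. So the first-component homotopy of Proposition~\ref{prop1} combined with the scaling in the second component gives index $0$. Additivity then yields index $1-0=1$ on the annulus.

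\textbf{Case $(B)$--$(ii)$.} The roles of $r$ and $R$ are swapped: the index on the $R$-set is $0$ and on the $r$-set is $1$, so the annulus index is $0-1=-1$. In either case the index is nonzero, and the existence property produces a fixed point in $K_1(\alpha,\varphi,a,c)\times(K_2)_{r,R}$, where $\widetilde T=T$.
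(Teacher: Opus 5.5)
\textbf{Where the proposal breaks down: the index-zero step.} Your architecture is the paper's: extend by $\widetilde T(u_1,u_2)=T(u_1,\rho(u_2))$, get index $1$ from Proposition~\ref{prop1} on the sphere where $T_2$ compresses, get index $0$ from Proposition~\ref{prop2} on the sphere where it expands, and subtract by additivity. The index-zero step, however, does not hold together as written.

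First, your reason for discarding $S=\widetilde T_2$ is backwards. If $u_2-\widetilde T_2u=\mu\widetilde T_2u$ with $\|u_2\|=r$, then $\|\widetilde T_2u\|=r/(1+\mu)\le r$, which contradicts $(B)$--$(i)$. That contradiction \emph{is} the verification of the hypothesis $u_2-\widetilde T_2u\neq\mu Su$ of Proposition~\ref{prop2}, so this choice works.

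Second, your scaling homotopy is not a new remedy. With $\sigma(t)=1+(1-t)\mu_0$, the map $\sigma(t)\widetilde T_2u$ is literally the homotopy $\widetilde T_2u+(1-t)\mu_0Su$ of Proposition~\ref{prop2} with $S=\widetilde T_2$. Also, $\sigma$ must equal $1$ at $t=1$, where the first component is $\widetilde T_1$, and be large at $t=0$; your orientation is reversed.

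Third, the lower bound the scaling needs, $\inf\|\widetilde T_2u\|>0$ over all of $K_1[\alpha,\varphi,a,c]\times\overline{\Omega}^{\|\cdot\|_2}_r$, does not follow from compactness. Compactness gives at best attainment on the closure of the image, not positivity. Hypothesis $(B)$ controls $\|T_2u\|$ only when $\|u_2\|\in\{r,R\}$, so $T_2$ may vanish at points with $r<\|u_2\|<R$, and $\rho$ may send points of the ball there.

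In case $(B)$--$(i)$ you could repair this by normalizing the retraction on the inner ball, $u\mapsto r\rho(u)/\|\rho(u)\|$ for $\|u\|<r$. Then $\overline{\Omega}^{\|\cdot\|_2}_r$ is sent into $\{\|v\|=r\}$ and the infimum is at least $r$. In case $(B)$--$(ii)$, however, the index-zero computation is on $\Omega^{\|\cdot\|_2}_R$, where $\widetilde T=T$ on the whole annulus, so no such normalization is available. The end map $(\overline u_1,\sigma_{\max}\widetilde T_2)$ can then have fixed points, for instance at $u_2=0$ if $T_2(\overline u_1,\rho(0))=0$, and the existence property no longer yields index $0$. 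Your ``roles swapped'' sentence hides a genuine failure here.

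\textbf{What closes the argument.} Your fallback does, and it is exactly the paper's proof. Apply Proposition~\ref{prop2} through Remark~\ref{remark1} with $S=\widetilde T_2$ defined only on $K_1[\alpha,\varphi,a,c]\times\partial_{K_2}U$, taking $U=\Omega^{\|\cdot\|_2}_r$ in case $(i)$ and $U=\Omega^{\|\cdot\|_2}_R$ in case $(ii)$.
\begin{itemize}
\item On that sphere, $(B)$ gives $\inf\|Su\|\ge r$ (respectively $\ge R$) directly.
\item Admissibility is the $1/(1+\mu)$ computation above.
\item The retract hypothesis of Remark~\ref{remark1} is automatic, since $u\mapsto r\rho(u)/\|\rho(u)\|$ and $u\mapsto R\rho(u)/\|\rho(u)\|$ retract the respective balls onto the spheres.
\end{itemize}
Make this the main argument and drop the constant-$S$ and pure-scaling detours. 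You then get $1-0=1$ in case $(i)$ and $0-1=-1$ in case $(ii)$, as claimed.
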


\noindent
{\bf Proof.} Assume that condition $(B)$--$(ii)$ holds. Then the second component of the operator exhibits expansive behavior. The case $(B)$--$(i)$, in which the second component is compressive, can be treated analogously.

As a first step, we extend the operator $T$ to
$\overline{\Omega}^{\varphi}_c\times\overline{\Omega}^{\|\cdot\|_2}_R$ by defining
\[
\tilde{T}:
\overline{\Omega}^{\varphi}_c\times\overline{\Omega}^{\|\cdot\|_2}_R
\longrightarrow K,
\qquad
\tilde{T}(u_1,u_2)
:=
T\bigl(u_1, \rho_2(u_2)\bigr),
\]
where $\rho_2 : \overline{\Omega}^{\|\cdot\|_2}_R \to (\overline{K}_2)_{r,R}$ is a retraction of $\overline{\Omega}^{\|\cdot\|_2}_R$ onto $(\overline{K}_2)_{r,R}$.

Observe that $\tilde{T}$ satisfies condition $(A)$ in
Propositions~\ref{prop1} and~\ref{prop2} by taking
$\overline{U}:=\overline{\Omega}^{\|\cdot\|_2}_R$ in those statements.

Condition $(B)$--$(ii)$ implies that 
\[
\tilde{T}_2u\neq \lambda u_2
\quad
\text{for }
u_1\in K_1[\alpha,\varphi,a,c],\ \|u_2\|=r \text{ and } \lambda\ge1.
\]
Indeed, otherwise there would exist
$\bar u=(\bar u_1,\bar u_2)\in
K_1[\alpha,\varphi,a,c]\times K_2$
with $\|\bar u_2\|=r$ and $\bar\lambda\ge1$ such that
$\tilde{T}_2\bar u=\bar\lambda\,\bar u_2$.
Consequently,
$\|\tilde{T}_2\bar u\|=\bar\lambda\|\bar u_2\|\ge \|\bar u_2\|$,
which contradicts $(B)$--$(ii)$ because $\tilde{T}_2\bar u=T_2\bar u$.
Proposition~\ref{prop1} therefore yields
\[
i_{\overline{\Omega}^\varphi_c\times K_2}
\left(\tilde T,K_1(\alpha,\varphi,a,c)\times \Omega^{\|\cdot\|_2}_r\right)=1.
\]

On the other hand, since
$\|\tilde{T}_2u\|>\|u_2\|$ for
$u_1\in K_1[\alpha,\varphi,a,c]$ and $\|u_2\|=R$,
we have
\[
\tilde{T}_2u\neq \mu u_2
\quad
\text{for }
u_1\in K_1[\alpha,\varphi,a,c],\ \|u_2\|=R\text{ and } \mu\in(0,1].
\]
This implies that
\[
u_2-\tilde{T}_2u\neq \mu \tilde{T}_2u
\quad
\text{for }
u_1\in K_1[\alpha,\varphi,a,c],\ \|u_2\|=R \text{ and } \mu\ge0.
\]
Indeed, if this were not the case, there would exist
$\bar u=(\bar u_1,\bar u_2)$ with
$\bar u_1\in K_1[\alpha,\varphi,a,c]$,
$\|\bar u_2\|=R$, and $\bar\mu\ge0$ such that
$\bar u_2-\tilde{T}_2\bar u=\bar\mu\,\tilde{T}_2\bar u$.
Hence,
$\tilde{T}_2\bar u=(1+\bar\mu)^{-1}\bar u_2$ with
$(1+\bar\mu)^{-1}\in(0,1]$, a contradiction.

Moreover,
\[
\inf_{u\in
	K_1[\alpha,\varphi,a,c]\times
	\partial_{K_2}\Omega^{\|\cdot\|_2}_R}
\|\tilde{T}_2u\|
=
\inf_{u\in
	K_1[\alpha,\varphi,a,c]\times
	\partial_{K_2}\Omega^{\|\cdot\|_2}_R}
\|T_2u\|
>R.
\]
Taking Remark~\ref{remark1} into account and applying
Proposition~\ref{prop2} with
$S:=\tilde{T}$ and $\overline{U}:=\overline{\Omega}^{\|\cdot\|_2}_R$, we obtain
\[
i_{\overline{\Omega}^\varphi_c\times K_2}
\left(\tilde T,K_1(\alpha,\varphi,a,c)\times \Omega^{\|\cdot\|_2}_R\right)=0.
\]

Since $\tilde{T}$ has no fixed points $u = (u_1,u_2)$ with
$u_1 \in K_1[\alpha,\varphi,a,c]$ and $\|u_2\| = r$,
we can apply the additivity property of the fixed point index, which yields
\[
\begin{aligned}
	i_{\overline{\Omega}^\varphi_c \times K_2}
	\bigl(\tilde{T}, K_1(\alpha,\varphi,a,c)\times (K_2)_{r,R}\bigr)
	&=
	i_{\overline{\Omega}^\varphi_c \times K_2}
	\left(\tilde{T}, K_1(\alpha,\varphi,a,c)\times \Omega^{\|\cdot\|_2}_R\right)
	\\
	&\quad -
	i_{\overline{\Omega}^\varphi_c \times K_2}
	\left(\tilde{T}, K_1(\alpha,\varphi,a,c)\times \Omega^{\|\cdot\|_2}_r\right)
	= -1.
\end{aligned}
\]
Finally, since $\tilde{T}=T$ on
$K_1[\alpha,\varphi,a,c]\times (\overline{K}_2)_{r,R}$,
it follows that
\[
i_{\overline{\Omega}^\varphi_c \times K_2}
\bigl(T, K_1(\alpha,\varphi,a,c)\times (K_2)_{r,R}\bigr)=-1.
\]
The existence of the fixed point is now a consequence of the existence property of the fixed point index.
\qed

\begin{theorem} 
	\label{th_Kras_LW}
	\emph{(Leggett-Williams $\times$ Krasnosel'ski\u{\i})}
	Let $a,b,c,d,r,R\in \mathbb{R}_+$ be such that $d<a<b\leq c$, $r<R$ and $\phi:K_1\rightarrow[0,+\infty)$ a continuous convex functional satisfying $\alpha(u_1)\le\phi(u_1)$, $u_1\in K_1$. Suppose that $T:\overline{\Omega}^\varphi_c\times(\overline{K}_2)_{r,R}\rightarrow \overline{\Omega}^\varphi_c\times K_2$ is a compact operator satisfying the following conditions
	\begin{enumerate}[(A)]
		\item
		\begin{enumerate}[(a)]
			\item $\{u\in \overline{\Omega}_c^\varphi[\alpha,\beta,a,b]\times (\overline{K}_2)_{r,R}: \alpha(u_1)>a\}\neq \emptyset$ and $\alpha(T_1u)>a$ for $u\in \overline{\Omega}_c^\varphi[\alpha,\beta,a,b]\times (\overline{K}_2)_{r,R}$ with $\alpha(u_1)=a$;
			\item $\phi(T_1 u)< d$ for $u\in\overline{\Omega}^\varphi_c[\phi,d]\times (\overline{K}_2)_{r,R}$;
			\item $\alpha(T_1 u)>a$ for $u\in K_1[\alpha,\varphi,a,c]\times (\overline{K}_2)_{r,R}$ with $\alpha(u_1)=a$ and $\beta(T_1u)>b$,
		\end{enumerate}
		\item For $u\in \overline{\Omega}^\varphi_c\times(\overline{K}_2)_{r,R}$
		\begin{enumerate}[(i)]
			\item 	$\|T_2 u\|>\|u_2\|$ if $\|u_2\|=r$ and $\|T_2 u\|<\|u_2\|$ if $\|u_2\|=R$; or
			\item	$\|T_2 u\|<\|u_2\|$ if $\|u_2\|=r$ and $\|T_2 u\|>\|u_2\|$ if $\|u_2\|=R$.
		\end{enumerate}
	\end{enumerate}
	Then, $T$ admits at least three distinct fixed points in $\overline{\Omega}^\varphi_c\times (K_2)_{r,R}$.
\end{theorem}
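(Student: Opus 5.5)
Following the pattern of the classical Leggett--Williams argument, I will compute the fixed point index of $T$ with respect to the retract $\overline{\Omega}^\varphi_c\times K_2$ on three sets: the whole set $\Omega^\varphi_c$-level domain, the ``small'' set $\overline{\Omega}^\varphi_c(\phi,d)$, and the Leggett--Williams set $K_1(\alpha,\varphi,a,c)$, each multiplied by the annulus $(K_2)_{r,R}$. The three indices are then compared through additivity. As in Proposition~\ref{Prop_Kras_LW}, I will assume $(B)$--$(ii)$; the case $(B)$--$(i)$ is identical with all signs flipped.

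\textbf{Step 1: the whole set.} I extend $T$ to $\tilde T(u_1,u_2)=T(u_1,\rho_2(u_2))$ on $\overline{\Omega}^\varphi_c\times\overline{\Omega}^{\|\cdot\|_2}_R$. Since $T_1$ maps into $\overline{\Omega}^\varphi_c$, the Remark after Proposition~\ref{prop3} (case $\psi=\varphi$, $d=c$) applies. For $U=\Omega^{\|\cdot\|_2}_r$, part (i) of Proposition~\ref{prop3} gives index $1$; the Leray--Schauder condition \eqref{ec1} follows from $(B)$--$(ii)$ exactly as in Proposition~\ref{Prop_Kras_LW}. For $U=\Omega^{\|\cdot\|_2}_R$, part (ii) with $S=\tilde T_2$ gives index $0$, using Remark~\ref{remark1}. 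By additivity,
\[
i\bigl(T,\overline{\Omega}^\varphi_c\times(K_2)_{r,R}\bigr)=-1,
\]
where points with $\varphi(u_1)=c$ cause no trouble because we are working in the retract $\overline{\Omega}^\varphi_c$.

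\textbf{Step 2: the small set.} I apply Proposition~\ref{prop3} with $\psi=\phi$. Condition $(A)$--$(b)$ gives exactly $T_1(\overline{\Omega}^\varphi_c[\phi,d]\times U)\subset\overline{\Omega}^\varphi_c(\phi,d)$. The same two computations on $\Omega_r$ and $\Omega_R$, followed by additivity, yield
\[
i\bigl(T,\overline{\Omega}^\varphi_c(\phi,d)\times(K_2)_{r,R}\bigr)=-1.
\]

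\textbf{Step 3: the Leggett--Williams set.} I apply Proposition~\ref{Prop_Kras_LW}, which gives index $-1$ on $K_1(\alpha,\varphi,a,c)\times(K_2)_{r,R}$. Its hypothesis $(A)$--$(b)$ holds because $T_1$ maps into $\overline{\Omega}^\varphi_c$.

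\textbf{Step 4: disjointness and the third fixed point.} The sets in Steps 2 and 3 are disjoint: if $\alpha(u_1)>a$, then $\phi(u_1)\ge\alpha(u_1)>a>d$. There are no fixed points on their boundaries (shown inside those propositions), nor on $\|u_2\|\in\{r,R\}$. Hence, by additivity, the index on the complement
\[
\bigl(\overline{\Omega}^\varphi_c\setminus(\overline{\Omega}^\varphi_c[\phi,d]\cup K_1[\alpha,\varphi,a,c])\bigr)\times(K_2)_{r,R}
\]
equals $-1-(-1)-(-1)=1\neq0$. This produces a third fixed point, distinct from the other two, and all three have $u_2\in(K_2)_{r,R}$.

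\textbf{Main obstacle.} The delicate point is that the boundaries of the regions must be fixed-point free. On $\phi(u_1)=d$ this follows from $(A)$--$(b)$, which forces $\phi(T_1u)<d$. On $\alpha(u_1)=a$ it follows from $(A)$--$(a)$ and $(A)$--$(c)$, as in Proposition~\ref{prop1}; note that $(A)$--$(a)$ is stated on the closed annulus, which covers what is needed. A second point to check is that the Leggett--Williams set is relatively open in $\overline{\Omega}^\varphi_c$, which holds because its first factor is $\{\alpha>a\}$ there.
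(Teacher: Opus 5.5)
Your proposal is correct and follows essentially the same route as the paper. You extend $T$ via $\rho_2$ and compute the index on $\overline{\Omega}^\varphi_c\times(K_2)_{r,R}$ and on $\overline{\Omega}^\varphi_c(\phi,d)\times(K_2)_{r,R}$ by applying Proposition~\ref{prop3} on $\Omega^{\|\cdot\|_2}_r$ and $\Omega^{\|\cdot\|_2}_R$. You take the index on $K_1(\alpha,\varphi,a,c)\times(K_2)_{r,R}$ from Proposition~\ref{Prop_Kras_LW} and conclude by additivity. The only difference is that you treat case $(B)$--$(ii)$, getting indices $-1,-1,-1$ and $1$ on the remaining set, whereas the paper treats $(B)$--$(i)$ and gets $1,1,1$ and $-1$.
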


\noindent
{\bf Proof.} Assume that condition $(B)$--$(i)$ holds. The proof in the remaining case is analogous.
Under this assumption, the operator $T$ admits a fixed point in each of the following relatively open and pairwise disjoint subsets $\mathcal{U}_1:= K_1(\alpha,\varphi,a,c)\times (K_2)_{r,R},\,\mathcal{U}_2:=\overline{\Omega}^\varphi_c(\phi,d)\times (K_2)_{r,R},$ and 
\[\mathcal{U}_3:=\left(\overline{\Omega}_c^\varphi\times (K_2)_{r,R}\right)\setminus\left(\overline{\mathcal{U}}_1\cup\overline{\mathcal{U}}_2\right).\]

First observe that, since $T_1\bigl(\overline{\Omega}^\varphi_c \times (\overline{K}_2)_{r,R}\bigr)
\subset \overline{\Omega}^\varphi_c,$ it follows that
\[
\varphi(T_1 u) \le c
\quad
\text{for all }
u \in K_1[\alpha,\varphi,a,c] \times (\overline{K}_2)_{r,R}.
\]
Hence, by Proposition~\ref{Prop_Kras_LW}, $i_{\overline{\Omega}^\varphi_c \times K_2}\bigl(T,\mathcal{U}_1\bigr)=1,$ and therefore $T$ admits a fixed point in $\mathcal{U}_1$.

Consider next the extension of $T$ given by
\[
\tilde{T}:
\overline{\Omega}^{\varphi}_c\times\overline{\Omega}^{\|\cdot\|_2}_R
\longrightarrow K,
\qquad
\tilde{T}(u_1,u_2)
:=
T\bigl(u_1, \rho_2(u_2)\bigr),
\]
where $\rho_2$ again denotes a retraction, as in the previous proof.

Note that $\tilde{T}_1\left(\overline{\Omega}^\varphi_c\times \overline{\Omega}^{\|\cdot\|_2}_R\right)
\subset
\overline{\Omega}^\varphi_c.$ Moreover, for $u\in \overline{\Omega}^\varphi_c[\phi,d]\times \overline{\Omega}^{\|\cdot\|_2}_R$ it follows from $(A)$--$(b)$ that $\phi(\tilde{T}_1u)<d$, and hence $\tilde{T}_1\left(\overline{\Omega}^\varphi_c[\phi,d]\times \overline{\Omega}^{\|\cdot\|_2}_R\right) \subset\overline{\Omega}^\varphi_c(\phi,d).$ In addition, for every
$u\in \overline{\Omega}^\varphi_c\times\overline{\Omega}^{\|\cdot\|_2}_R$,
\[
\|\tilde{T}_2u\|>\|u_2\|
\, \text{if } \|u_2\|=r,
\text{ and }
\|\tilde{T}_2u\|<\|u_2\|
\, \text{if } \|u_2\|=R.
\]

Arguing as in the previous proof, we obtain
\[
\inf_{u\in\overline{\Omega}^\varphi_c[\phi,d]\times \overline{\Omega}^{\|\cdot\|_2}_r}
\|\tilde{T}_2u\|
=
\inf_{u\in \overline{\Omega}^\varphi_c[\phi,d]\times \partial_{K_2}\overline{\Omega}^{\|\cdot\|_2}_r}\|T_2u\|\geq \inf_{u\in\overline{\Omega}_c^\varphi\times \partial_{K_2}\overline{\Omega}^{\|\cdot\|_2}_r}
\|T_2u\|
>r>0,
\]
together with
\[\begin{aligned}
	\tilde{T}_2u\neq \lambda u_2
	\quad &
	\text{for }
	u_1\in \overline{\Omega}^\varphi_c,\ \|u_2\|=R,\ \lambda\ge1; \text{ and }\\
	u_2-\tilde{T}_2u\neq \mu \tilde{T}_2u
	\quad &
	\text{for }
	u_1\in \overline{\Omega}^\varphi_c,\ \|u_2\|=r,\ \mu\ge0.
\end{aligned}\]

We can therefore apply Proposition~\ref{prop3} both $(i)$ and $(ii)$, to $\tilde{T}$ and its restriction to $\overline{\Omega}^{\varphi}_c[\phi,d]\times \overline{\Omega}^{\|\cdot\|_2}_r$.
Consequently,
\[
\begin{aligned}
&i_{\overline{\Omega}^\varphi_c\times K_2}
\bigl(\tilde{T},\overline{\Omega}^\varphi_c\times \Omega^{\|\cdot\|_2}_R\bigr)
=
i_{\overline{\Omega}^\varphi_c\times K_2}
\bigl(\tilde{T},\overline{\Omega}^\varphi_c(\phi,d)\times \Omega^{\|\cdot\|_2}_R\bigr)
=1; \, \text{ and }\\
& i_{\overline{\Omega}^\varphi_c\times K_2}
\bigl(\tilde{T},\overline{\Omega}^\varphi_c\times \Omega^{\|\cdot\|_2}_r\bigr)
=
i_{\overline{\Omega}^\varphi_c\times K_2}
\bigl(\tilde{T},\overline{\Omega}^\varphi_c(\phi,d)\times \Omega^{\|\cdot\|_2}_r\bigr)
=0.
\end{aligned}
\]

By the additivity property of the fixed point index, we arrive to
\[\begin{aligned}
	i_{\overline{\Omega}^\varphi_c\times K_2}\left(\tilde{T},\,\overline{\Omega}^\varphi_c\times (K_2)_{r,R}\right)&=i_{\overline{\Omega}^\varphi_c\times K_2}\left(\tilde{T},\overline{\Omega}^\varphi_c\times \Omega^{\|\cdot\|_2}_R\right)- i_{\overline{\Omega}^\varphi_c\times K_2}\left(\tilde{T},\overline{\Omega}^\varphi_c\times \Omega^{\|\cdot\|_2}_r\right)=1,\\
	 i_{\overline{\Omega}^\varphi_c\times K_2}\left(\tilde{T},\,\mathcal{U}_2\right)&=i_{\overline{\Omega}^\varphi_c\times K_2}\left(\tilde{T},\overline{\Omega}^\varphi_c(\phi,d)\times \Omega^{\|\cdot\|_2}_R\right)- i_{\overline{\Omega}^\varphi_c\times K_2}\left(\tilde{T},\overline{\Omega}^\varphi_c(\phi,d)\times \Omega^{\|\cdot\|_2}_r\right)=1.
\end{aligned}\]
Applying additivity once more yields
\[i_{\overline{\Omega}^\varphi_c\times K_2}\left(\tilde{T},\,\mathcal{U}_3\right)=i_{\overline{\Omega}^\varphi_c\times K_2}\left(\tilde{T},\,\overline{\Omega}^\varphi_c\times (K_2)_{r,R}\right)-i_{\overline{\Omega}^\varphi_c\times K_2}\left(\tilde{T},\,\mathcal{U}_1\right)-i_{\overline{\Omega}^\varphi_c\times K_2}\left(\tilde{T},\,\mathcal{U}_2\right)=-1.\]

Finally, since $\tilde{T}=T$ on
$\overline{\Omega}^\varphi_c\times (\overline{K}_2)_{r,R}$,
we obtain $i_{\overline{\Omega}^\varphi_c \times K_2}\bigl(T, \mathcal{U}_2\bigr)=1,$ and $i_{\overline{\Omega}^\varphi_c \times K_2}\bigl(T, \mathcal{U}_3\bigr)=-1$.
The existence property of the fixed point index provides the existence of the remaining two fixed points of $T$ in $\mathcal{U}_2$ and $\mathcal{U}_3$ ( see Figure~\ref{fig1}).
\qed

\begin{figure}[htbp]
	\centering
	\subfigure[$i_{\overline{\Omega}^\varphi_c\times K_2}(T,\mathcal{U}_1)=1$.]{
		\includegraphics[scale=0.6]{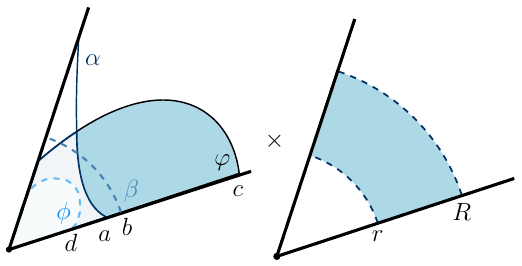}}
	\subfigure[$i_{\overline{\Omega}^\varphi_c\times K_2}(T,\mathcal{U}_2)=1$.]{\includegraphics[scale=0.6]{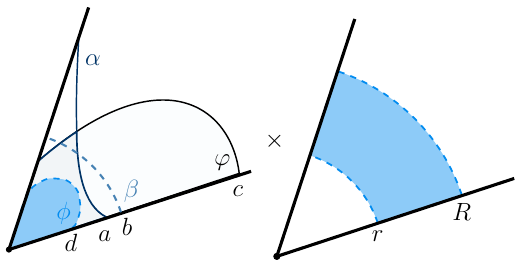}
	}
	\subfigure[$i_{\overline{\Omega}^\varphi_c\times K_2}(T,\mathcal{U}_3)=-1$.]{\includegraphics[scale=0.6]{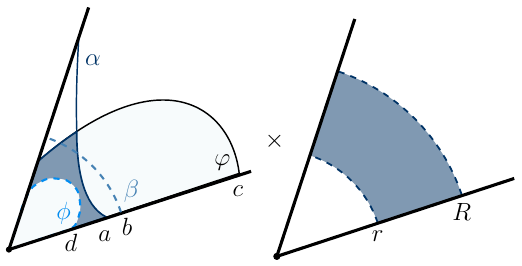}
	}
	\caption{Illustration of the three different sets for which a fixed point exists under the assumptions of Theorem~\ref{th_Kras_LW}.}
	\label{fig1}
\end{figure}

The requirement in Theorem~\ref{th_Kras_LW} that the image of $T_1$ lies in $\overline{\Omega}_c^\varphi$ can be removed, while still guaranteeing the existence of two solutions, one of which has both components nontrivial, as shown in the following result.

\begin{corollary}
		\label{Coro_Kras_LW}
	Let $a,c,d,r,R\in \mathbb{R}_+$ be such that $d<a<c$, $r<R$ and $\phi:K_1\rightarrow[0,+\infty)$ a continuous convex functional satisfying $\alpha(u_1)\le\phi(u_1)$, $u_1\in K_1$. Suppose that $T:\overline{\Omega}^\varphi_c\times(\overline{K}_2)_{r,R}\rightarrow K_1\times K_2$ is a compact operator satisfying the following conditions
	\begin{enumerate}[(A)]
		\item
		\begin{enumerate}[(a)]
			\item $\{u\in K_1[\alpha,\beta,a,c]\times (\overline{K}_2)_{r,R}: \alpha(u_1)>a\}\neq \emptyset$ and $\alpha(T_1u)>a$ for $u\in K_1[\alpha,\beta,a,c]\times (\overline{K}_2)_{r,R}$ with $\alpha(u_1)=a$;
			\item $\phi(T_1 u)< d$ for $u\in\overline{\Omega}^\varphi_c[\phi,d]\times (\overline{K}_2)_{r,R}$;
			\item $\alpha(T_1 u)>\frac{a}{c}\varphi(T_1u)$ for $u\in K_1[\alpha,\varphi,a,c]\times (\overline{K}_2)_{r,R}$ with $\alpha(u_1)=a$ and $\varphi(T_1u)>c$,
		\end{enumerate}
		\item For $u\in \overline{\Omega}^\varphi_c\times(\overline{K}_2)_{r,R}$
		\begin{enumerate}[(i)]
			\item 	$\|T_2 u\|>\|u_2\|$ if $\|u_2\|=r$ and $\|T_2 u\|<\|u_2\|$ if $\|u_2\|=R$; or
			\item	$\|T_2 u\|<\|u_2\|$ if $\|u_2\|=r$ and $\|T_2 u\|>\|u_2\|$ if $\|u_2\|=R$.
		\end{enumerate}
	\end{enumerate}
	Then, $T$ admits at least two distinct fixed points in $\overline{\Omega}^\varphi_c\times (K_2)_{r,R}$.
\end{corollary}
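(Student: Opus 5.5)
The plan is to reduce the statement to Theorem~\ref{th_Kras_LW}. The only thing missing from that theorem is the invariance $T_1\bigl(\overline{\Omega}^\varphi_c\times(\overline{K}_2)_{r,R}\bigr)\subset\overline{\Omega}^\varphi_c$, so I would push $T_1$ back into $\overline{\Omega}^\varphi_c$ by a radial retraction. I then expect condition $(A)$--$(c)$ to do the job that $(A)$--$(b)$ of Proposition~\ref{prop1} did before, and to show that the relevant fixed points of the modified operator are genuine fixed points of $T$.

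\emph{Step 1 (modified operator).} Define $\rho:K_1\to\overline{\Omega}^\varphi_c$ by $\rho(v)=v$ if $\varphi(v)\le c$ and $\rho(v)=\frac{c}{\varphi(v)}v$ otherwise. This map is continuous. It lands in $\overline{\Omega}^\varphi_c$ provided $\varphi(\lambda v)\le\lambda\varphi(v)$ for $\lambda\in[0,1]$, which holds by convexity when $\varphi(0)=0$; otherwise I would replace the factor $c/\varphi(v)$ by the first $\lambda$ on $[0,1]$ with $\varphi(\lambda v)=c$. Set $\hat T:=(\rho\circ T_1,\,T_2)$, which is compact and maps into $\overline{\Omega}^\varphi_c\times K_2$.

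Since $\alpha$ is concave and nonnegative, $\alpha(\lambda v)\ge\lambda\alpha(v)$ for $\lambda\in[0,1]$. Hence, whenever $\varphi(T_1u)>c$, condition $(A)$--$(c)$ gives
\[
\alpha(\hat T_1u)\ge \frac{c}{\varphi(T_1u)}\,\alpha(T_1u)>a .
\]
So the Leggett--Williams outward condition at the level $\alpha=a$ survives the retraction.

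\emph{Step 2 (hypotheses for $\hat T$).} Apply Theorem~\ref{th_Kras_LW}, or more economically Propositions~\ref{Prop_Kras_LW} and~\ref{prop3} together with additivity, to $\hat T$ with $b:=c$ and with the convex functional in the lower box taken to be $\varphi$. Hypothesis $(A)$--$(a)$ for $\hat T$ follows from the corollary's $(A)$--$(a)$ when $\varphi(T_1u)\le c$, and from Step 1 when $\varphi(T_1u)>c$. Hypothesis $(A)$--$(c)$ is again Step 1. For $(A)$--$(b)$ with $\phi$: $\phi(\hat T_1u)<d$ holds whenever $\hat T_1u=T_1u$. When $\hat T_1u$ is a rescaled multiple $\lambda T_1u$, convexity of $\phi$ gives $\phi(\lambda T_1u)\le\lambda\phi(T_1u)+(1-\lambda)\phi(0)$. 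This bound is adequate if $\phi(0)<d$; that is automatic in the model case $\phi(0)=0$, and I would assume or verify it. Conditions $(B)$ do not involve the first component and carry over unchanged. This gives
\[
i\bigl(\hat T,\mathcal U_1\bigr)=\pm1\quad\text{and}\quad i\bigl(\hat T,\mathcal U_2\bigr)=1 ,
\]
with $\mathcal U_1,\mathcal U_2$ as in the proof of Theorem~\ref{th_Kras_LW}.

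\emph{Step 3 (main obstacle): fixed points of $\hat T$ are fixed points of $T$.} Let $\hat Tu=u$ with $\varphi(T_1u)>c$. Then $u_1=\lambda T_1u$ with $\lambda<1$ and $\varphi(u_1)=c$, i.e.\ $T_1u=\lambda^{-1}u_1$ on the outer level set. This is not excluded by the stated hypotheses, and it is exactly why the corollary promises only two fixed points instead of three. To get around it, I would first try to exclude such points from $\mathcal U_2$. There $\phi(u_1)<d<a$, so $\alpha(u_1)<a$; combined with $\alpha(T_1u)>\frac ac\varphi(T_1u)$ this should be incompatible with $u_1$ being a positive multiple $\lambda T_1u$ lying in $\overline{\Omega}^\varphi_c[\phi,d]$. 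If that works, the fixed point of $\hat T$ in $\mathcal U_2$ is a fixed point of $T$.

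For $\mathcal U_1$ I would not go through $\hat T$. Instead I would compute $i(T,\cdot)$ directly on the smaller set $K_1(\alpha,\varphi,a,c)\times(K_2)_{r,R}$ via the homotopy $tT_1u+(1-t)\overline u_1$ of Proposition~\ref{prop1}, using $(A)$--$(c)$ exactly as in Step 1 to rule out fixed points on $\{\alpha=a\}$. The part $\{\varphi=c\}$ of the boundary is the part that still needs separate control. I expect this last boundary check to be the delicate point of the whole proof. The fixed points produced in $\mathcal U_1$ and $\mathcal U_2$ are distinct, since $\alpha(u_1)>a$ in $\mathcal U_1$ while $\alpha(u_1)\le\phi(u_1)<d<a$ in $\mathcal U_2$.
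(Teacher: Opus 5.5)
Your Steps~1--2 are the paper's proof: the same radial rescaling (the paper's $N$), checked against Theorem~\ref{th_Kras_LW} with $\beta=\varphi$, $b=c$. Your caveats $\varphi(0)=0$ and $\phi(0)<d$ are fair; the paper uses them tacitly.

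The gap is Step~3, which you leave open, and neither half of your plan can be completed. For $\mathcal{U}_2$ you want to play $\alpha(u_1)<a$ against $\alpha(T_1u)>\frac{a}{c}\varphi(T_1u)$. But (A)--(c) is assumed only at points with $\alpha(u_1)=a$, while in $\mathcal{U}_2$ one has $\alpha(u_1)\le\phi(u_1)<d<a$.

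For $\mathcal{U}_1$, nothing in the hypotheses controls $T_1$ on $\{\varphi(u_1)=c\}$, and that fixed point is genuinely lost. Take $K_1=K_2=[0,\infty)$, $\alpha=\beta=\varphi=\phi$ the identity, $(r,R)=(1,3)$, $T_2\equiv 2$, and any continuous $T_1\ge 0$ with $T_1<d$ on $[0,d]$ and $T_1(x)=x+1$ on $[a,c]$. All hypotheses hold, even with (A)--(c) required for every $u$. Yet $T$ has no fixed point in $\mathcal{U}_1$; the fixed point of $\hat T$ there is the rescaled point $u_1=c$. So $\mathcal{U}_1$ is the wrong set to aim at.

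The paper instead keeps the two fixed points of $N$ with $\alpha(\bar u_1)<a$, namely those in $\mathcal{U}_2$ and $\mathcal{U}_3$. It excludes $\varphi(T_1\bar u)>c$ via $a>\alpha(\bar u_1)=\alpha(N_1\bar u)\ge\frac{c}{\varphi(T_1\bar u)}\alpha(T_1\bar u)>a$. That last inequality is the same out-of-range use of (A)--(c) as in your $\mathcal{U}_2$ step, and it cannot be avoided, because the statement as written is false.

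Counterexample: take $K_1=\mathbb{R}^2_+$, $\alpha=\min$, $\beta=\varphi=\phi=\max$, $(d,a,c)=(1,2,4)$, $K_2,r,R,T_2$ as above, and $x_0=(\tfrac12,\tfrac12)$. Set $T_1x=x+t(x)\,\mathrm{Rot}_{\psi(x)}(x_0-x)$, where:
\begin{itemize}
\item $t(x)=\min\{1,1/(2|x-x_0|)\}$, with the Euclidean norm and $t(x_0)=1$;
\item $\mathrm{Rot}_\psi$ is rotation by the angle $\psi$;
\item $\psi$ is continuous (Tietze), with $\psi=0$ on $\{\min(x_1,x_2)\le\tfrac32\}$ and chosen on $[2,4]^2$ so that $T_1x=x+\tfrac{1}{2\sqrt2}(1,1)$ there.
\end{itemize}
On $\{\min(x_1,x_2)\le\tfrac32\}$ one has $T_1x=(1-t)x+tx_0$ with $t\in(0,1]$, which gives $T_1x\in\mathbb{R}^2_+$ and (A)--(b). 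On $(\tfrac32,4]^2$ the displacement has length $\tfrac12$, so positivity holds there too. (A)--(a) and (A)--(c) are immediate on $[2,4]^2$, and (B)--(i) holds. Yet $|T_1x-x|=\min\{|x-x_0|,\tfrac12\}$, so $(x_0,2)$ is the only fixed point.

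What the argument needs is (A)--(c) for every $u$ with $\varphi(T_1u)>c$, which is exactly what \eqref{cond_ig} verifies in Theorem~\ref{th5.3}. Under that hypothesis, plus your homogeneity caveats, the paper's route through $\mathcal{U}_2$ and $\mathcal{U}_3$ is complete, whereas $\mathcal{U}_1$ stays unrecoverable.
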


\noindent
{\bf Proof.} Let us define the auxiliary operator $N:\overline{\Omega}_c^\varphi\times(\overline{K}_2)_{r,R}
\longrightarrow \overline{\Omega}_c^\varphi\times K_2$ by
\begin{equation}
	N u=
	\begin{cases}
		T u, & \text{if } \varphi(T_1u)\leq c,\\[4pt]
		\left(\dfrac{c\,T_1 u}{\varphi(T_1 u)},\, T_2 u\right), & \text{if } \varphi(T_1u)> c.
	\end{cases}
\end{equation}

We claim that the operator $N$ satisfies all the assumptions of
Theorem~\ref{th_Kras_LW} with $\beta=\varphi$ and $b=c$.
Indeed, since $N_1\bigl(\overline{\Omega}_c^\varphi\times(\overline{K}_2)_{r,R}\bigr)
\subset \overline{\Omega}_c^\varphi$ there exists no element
$u\in K_1[\alpha,\varphi,a,c]\times(\overline{K}_2)_{r,R}$ such that
$\varphi(N_1u)>c$. Hence, condition \emph{(A)--(c)} is trivially satisfied.

Let $u_1\in \overline{\Omega}_c^\varphi[\phi,d]$ and
$u_2\in(\overline{K}_2)_{r,R}$. If $\varphi(T_1u)\leq c$, then
$N_1u=T_1u$ and, by condition \emph{(A)--(b)}, we obtain $\phi(N_1u)=\phi(T_1u)<d.$ On the other hand, if $\varphi(T_1u)>c$, then
$N_1u=\frac{c\,T_1u}{\varphi(T_1u)}$. By the convexity of $\phi$, it follows that
\[
\phi(N_1u)
=\phi\!\left(\frac{c\,T_1u}{\varphi(T_1u)}\right)
\leq \frac{c}{\varphi(T_1u)}\,\phi(T_1u)
<\phi(T_1u)<d.
\]
Therefore, operator $N$ satisfies condition \emph{(A)--(b)} of
Theorem~\ref{th_Kras_LW}.

Let now $u_1\in K_1[\alpha,\varphi,a,c]$ be such that $\alpha(u_1)=a$ and
$u_2\in(\overline{K}_2)_{r,R}$.
If $\varphi(T_1u)\leq c$, then $N_1u=T_1u$ and, by condition \emph{(A)--(a)}, $\alpha(N_1u)=\alpha(T_1u)>a$. If instead $\varphi(T_1u)>c$, we have
$N_1u=\dfrac{c\,T_1u}{\varphi(T_1u)}$ and, using the concavity of $\alpha$
together with condition \emph{(A)--(c)}, we infer that
\[
\alpha(N_1u)
=\alpha\!\left(\frac{c\,T_1u}{\varphi(T_1u)}\right)
\geq \frac{c}{\varphi(T_1u)}\,\alpha(T_1u)>a.
\]
Hence, condition \emph{(A)--(a)} of Theorem~\ref{th_Kras_LW}, with
$\beta=\varphi$ and $b=c$, is also fulfilled by operator $N$.

Condition \emph{(B)} of Theorem~\ref{th_Kras_LW} is clearly satisfied,
since $N_2u=T_2u$ for all
$u\in\overline{\Omega}_c^\varphi\times(\overline{K}_2)_{r,R}$.
Consequently, operator $N$ admits at least three fixed points in
$\overline{\Omega}_c^\varphi\times(K_2)_{r,R}$.

Moreover, the first component of two of these fixed points satisfies
$\alpha(u_1)<a$, as they are located in the sets $\overline{\Omega}_c^\varphi(\phi,d)\times(K_2)_{r,R}$ and $\bigl[\overline{\Omega}_c^\varphi(\alpha,a)\setminus
\overline{\Omega}_c^\varphi[\phi,d]\bigr]\times(K_2)_{r,R}$.

Let $\bar{u}$ be one of these two fixed points and assume, by contradiction,
that $\varphi(T_1\bar{u})>c$. Then
\[
a>\alpha(\bar{u}_1)
=\alpha(N_1\bar{u})
=\alpha\!\left(\frac{c\,T_1\bar{u}}{\varphi(T_1\bar{u})}\right)
\geq \frac{c}{\varphi(T_1\bar{u})}\,\alpha(T_1\bar{u})
>a,
\]
which is impossible. Therefore, necessarily
$\varphi(T_1\bar{u})\leq c$, and hence $\bar{u}=N\bar{u}=T\bar{u},$ showing that $\bar{u}$ is a fixed point of the original operator $T$. \qed

\begin{remark}
The conditions imposed on $T_1$ and $T_2$ in Proposition~\ref{Prop_Kras_LW}, Theorem~\ref{th_Kras_LW} and Corollary~\ref{Coro_Kras_LW} can evidently be interchanged without affecting the conclusion.
\end{remark}

\begin{remark}
	Note that in Theorem \ref{th_Kras_LW}  and Corollary~\ref{Coro_Kras_LW} it is possible to work with a more general domain for the second component, as discussed in \cite{Starshaped}. Specifically, one may consider a compact operator
	\[
	T:\overline{\Omega}^\varphi_c\times(\overline{\mathcal{O}}_2\setminus\Omega_2)\longrightarrow K_1\times K_2,
	\]
 where $\Omega_2$ and $\mathcal{O}_2$ are relatively open bounded subsets of the cone $K_2$, satisfying $0\in \Omega_2\subset\overline{\Omega}_2\subset \mathcal{O}_2$, and $\overline{\Omega}_2$ is a strictly star-shaped set in $K_2$. In this setting, condition $(B)$ must be adapted as follows:
	\begin{enumerate}[(B)]
		\item For $u\in \overline{\Omega}^\varphi_c\times(\overline{\mathcal{O}}_2\setminus\Omega_2)$:
		\begin{enumerate}[(i)]
			\item $\|T_2 u\|>\|u_2\|$ if $u_2\in\partial_{K_2}\Omega_2$ and $\|T_2 u\|<\|u_2\|$ if $u_2\in\partial_{K_2}\mathcal{O}_2$; or
			\item $\|T_2 u\|<\|u_2\|$ if $u_2\in\partial_{K_2}\Omega_2$ and $\|T_2 u\|>\|u_2\|$ if $u_2\in\partial_{K_2}\mathcal{O}_2$.
		\end{enumerate}
	\end{enumerate}
	
Moreover, although norm-type conditions are imposed, one may also consider other-types of compression-expansion conditions. Further details are provided in \cite[Remark 3.3]{Starshaped} (see also \cite[Corollary 3.1]{Starshaped}).
\end{remark}

\section{Leggett-Williams fixed point  theorem in product spaces}

In this section, we present a new multiplicity result under Leggett-Williams type conditions on both components. Accordingly, we introduce the following notation.

 For $j\in \{1,2\}$, let $\alpha_j,\beta_j:K_j\rightarrow [0,\infty)$ be continuous functionals on $K_j$, with $\alpha_j$ concave and $\beta_j$ convex. In addition, let $\phi_j,\varphi_j:K_j\rightarrow[0,\infty)$ be  continuous convex functionals  and suppose $\alpha_j(u)\le\phi_j(u)$, $u\in K_j$.
 
For $a_j,b_j, c_j \in \mathbb{R}_+$ satisfying $a_j <b_j\le c_j$, let us denote
 \[\begin{aligned}
 	K[\alpha,\varphi,a,c]&=K_1[\alpha_1,\varphi_1,a_1,c_1]
 	\times
 	K_2[\alpha_2,\varphi_2,a_2,c_2],\\
 	K(\alpha,\varphi,a,c) &=K_1(\alpha_1,\varphi_1,a_1,c_1)
 	\times
 	K_2(\alpha_2,\varphi_2,a_2,c_2),\\
 	\overline{\Omega}_c^\varphi[\alpha,\beta,a,b]&=\overline{\Omega}_{c_1}^{\varphi_1}[\alpha_1,\beta_1,a_1,b_1]\times \overline{\Omega}_{c_2}^{\varphi_2}[\alpha_2,\beta_2,a_2,b_2],\\
 	\overline{\Omega}_c^\varphi(\alpha,\beta,a,b)&=\overline{\Omega}_{c_1}^{\varphi_1}(\alpha_1,\beta_1,a_1,b_1)\times \overline{\Omega}_{c_2}^{\varphi_2}(\alpha_2,\beta_2,a_2,b_2).
 \end{aligned}\]
 
 Following a strategy similar to that of the previous section, we first compute the fixed point index of the operator on a set of the form $K(\alpha,\varphi,a,c)$, so that the existence of a fixed point is guaranteed.

\begin{theorem}
	\label{th1}
	Let $a_j,b_j,c_j\in \mathbb{R}_+$ be such that $a_j<b_j\leq c_j$ $(j=1,2)$. Suppose that $T:K[\alpha,\varphi,a,c]\rightarrow K$ is a compact operator such that for both $j\in\{1,2\}$ the following conditions are satisfied:
	\begin{enumerate}[(a)]
	\item $\{u_j\in \overline{\Omega}_{c_j}^{\varphi_j}[\alpha_j,\beta_j,a_j,b_j]: \alpha_j(u_j)>a_j\, \}\neq \emptyset$ and $\alpha_j(T_ju)>a_j$ for $u\in \overline{\Omega}_c^\varphi[\alpha,\beta,a,b]$ with $\alpha_j(u_j)=a_j$;
	\item $\varphi_j(T_j u)\leq c_j$ for $u\in K[\alpha,\varphi,a,c]$;
	\item $\alpha_j(T_j u)>a_j$ for $u\in K[\alpha,\varphi,a,c]$ with $\alpha_j(u_j)=a_j$ and $\beta_j(T_ju)>b_j$.
	\end{enumerate}
	Then $i_{\overline{\Omega}^{\varphi_1}_{c_1}\times \overline{\Omega}^{\varphi_2}_{c_2}}\left(T, K(\alpha,\varphi, a, c)\right)=1$. Consequently, $T$ admits at least one fixed point in $K(\alpha,\varphi, a, c)$.
\end{theorem}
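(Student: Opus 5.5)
The plan is to mirror the proof of Proposition~\ref{prop1}, now with a Leggett--Williams type homotopy in both components at once. Write $R:=\overline{\Omega}^{\varphi_1}_{c_1}\times \overline{\Omega}^{\varphi_2}_{c_2}$; this is a closed convex set, hence a retract. By (b), $T$ maps $K[\alpha,\varphi,a,c]$ into $R$. Let $D:=K[\alpha,\varphi,a,c]$; its relative interior in $R$ is $K(\alpha,\varphi,a,c)$. Its relative boundary in $R$ is
\[
\bigl(\{u_1:\alpha_1(u_1)=a_1\}\times K_2[\alpha_2,\varphi_2,a_2,c_2]\bigr)\cup\bigl(K_1[\alpha_1,\varphi_1,a_1,c_1]\times\{u_2:\alpha_2(u_2)=a_2\}\bigr),
\]
intersected with $R$. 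The face $\varphi_j=c_j$ is not boundary relative to $R$.

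The first step is to rule out fixed points on this boundary. Suppose $Tu=u$ with, say, $\alpha_1(u_1)=a_1$. There are two cases.
\begin{itemize}
\item If $\beta_1(u_1)\le b_1$: we would like to invoke (a), but (a) requires $u\in\overline{\Omega}_c^\varphi[\alpha,\beta,a,b]$, i.e.\ also $\beta_2(u_2)\le b_2$. If instead $\beta_2(u_2)>b_2$, then $\beta_2(T_2u)=\beta_2(u_2)>b_2$. Condition (c) for $j=1$, however, requires $\beta_1(T_1u)>b_1$, which fails here.
\item If $\beta_1(u_1)>b_1$: then $\beta_1(T_1u)>b_1$, so (c) gives $\alpha_1(T_1u)>a_1$, a contradiction.
\end{itemize}
So the mixed configuration $\alpha_1(u_1)=a_1$, $\beta_1(u_1)\le b_1<\beta_2(u_2)$ is the main obstacle. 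I would read (a) as applying whenever $u_j\in\overline{\Omega}^{\varphi_j}_{c_j}[\alpha_j,\beta_j,a_j,b_j]$, with the other component arbitrary in $K_{3-j}[\alpha,\varphi,a,c]$, which is the natural componentwise meaning. If the literal product reading is intended, I would first try to handle the mixed case via the homotopy argument below with an adapted $\overline u$.

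Next, fix $\overline u=(\overline u_1,\overline u_2)$ with $\overline u_j\in\overline{\Omega}^{\varphi_j}_{c_j}[\alpha_j,\beta_j,a_j,b_j]$ and $\alpha_j(\overline u_j)>a_j$; such points exist by (a). Define
\[
h(u,t)=t\,Tu+(1-t)\,\overline u .
\]
This is a compact homotopy into $R$, by convexity of $\varphi_j$ and (b). Suppose $h(u,t)=u$ on the boundary with $\alpha_j(u_j)=a_j$. Argue exactly as in Proposition~\ref{prop1}, componentwise in $j$:
\begin{itemize}
\item If $\beta_j(T_ju)>b_j$, then (c) gives $\alpha_j(T_ju)>a_j$, and concavity gives $\alpha_j(u_j)\ge t\alpha_j(T_ju)+(1-t)\alpha_j(\overline u_j)>a_j$.
\item If $\beta_j(T_ju)\le b_j$, convexity of $\beta_j$ gives $\beta_j(u_j)\le b_j$. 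Then (a) (componentwise reading) gives $\alpha_j(T_ju)>a_j$, and again $\alpha_j(u_j)>a_j$.
\end{itemize}
Both cases are contradictions. The case $t=0$ is excluded because $\alpha_j(\overline u_j)>a_j$.

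Homotopy invariance then yields
\[
i_R(T,K(\alpha,\varphi,a,c))=i_R(\overline u,K(\alpha,\varphi,a,c)).
\]
Since $\overline u\in K(\alpha,\varphi,a,c)$ (we have $\varphi_j(\overline u_j)\le c_j$ and $\alpha_j(\overline u_j)>a_j$), normalization gives index $1$. The existence property then provides a fixed point in $K(\alpha,\varphi,a,c)$.
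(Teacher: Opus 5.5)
Your proof is the paper's proof: exclude fixed points on $\{\alpha_1(u_1)=a_1\}\cup\{\alpha_2(u_2)=a_2\}$, deform $T$ to the constant $\bar u\in K(\alpha,\varphi,a,c)$ through $tTu+(1-t)\bar u$ using the two-case concavity/convexity argument of Proposition~\ref{prop1}, and conclude by homotopy invariance and normalization. Your componentwise reading of (a) is exactly what the paper's ``parallels Proposition~\ref{prop1}'' tacitly uses (it is the form of (a) in Theorem~\ref{th2}, where this result is applied), and your choice of $\bar u_j$ with $\beta_j(\bar u_j)\le b_j$ is what the convexity step needs, whereas the paper's text only requires $\bar u_j\in K_j[\alpha_j,\varphi_j,a_j,c_j]$; drop the hedge about rescuing the literal product reading with an adapted $\bar u$, since that version is false: with $K_j=[0,\infty)$, $\alpha_j=\beta_j=\varphi_j=\mathrm{id}$, $(a_j,b_j,c_j)=(1,2,3)$, $\rho=|u-(1,1)|$, $v(u)=(9/4-\rho^2,\,36/25-\rho^2)$ and $T_ju=\min\{u_j+\tfrac{1}{10}v_j(u),3\}$, the map satisfies (b), (c) and the product form of (a), yet has no fixed point in $[1,3]^2$ (because $v_1-v_2=81/100$, while $v_1<0$ on $u_1=3$ and $v_2<0$ on $u_2=3$), so its index on $(1,3]^2$ is $0$.
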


\noindent
{\bf Proof.} Observe that $T$ has no fixed points on
\[\begin{aligned}
\partial_{
	\overline{\Omega}^{\varphi_1}_{c_1}
	\times
	\overline{\Omega}^{\varphi_2}_{c_2}}
K(\alpha,\varphi,a,c),
\end{aligned}
\]
that is, there is no $u=(u_1,u_2) \in K[\alpha,\varphi,a,c]$ such that $Tu=u$ and either $\alpha_1(u_1)=a_1$ or $\alpha_2(u_2)=a_2$. The reasoning parallels that of Proposition~\ref{prop1} and is omitted. 

For each $j \in \{1,2\}$, choose $\bar u_j \in K_j[\alpha_j,\varphi_j,a_j,c_j]$ satisfying $\alpha_j(\bar u_j) > a_j$, and define the homotopy
\[
h : K[\alpha,\varphi,a,c] \times [0,1] \longrightarrow \overline{\Omega}^{\varphi_1}_{c_1}\times \overline{\Omega}^{\varphi_2}_{c_2}, \qquad
h(u_1,u_2,t) := \bigl( t T_1 u + (1-t) \bar u_1,\; t T_2 u + (1-t) \bar u_2 \bigr).
\]
This defines a compact homotopy. Additionally, by the same reasoning as in Proposition~\ref{prop1}, one verifies that $h(u,t) \neq u$ for all $u\in \partial_{
	\overline{\Omega}^{\varphi_1}_{c_1}
	\times
	\overline{\Omega}^{\varphi_2}_{c_2}}
K(\alpha,\varphi,a,c)$ and $t\in[0,1]$. It follows then from the homotopy invariance and normalization properties of the fixed point index that
\[
i_{\overline{\Omega}^{\varphi_1}_{c_1} \times \overline{\Omega}^{\varphi_2}_{c_2}} \bigl(T, K(\alpha,\varphi,a,c)\bigr)
= i_{\overline{\Omega}^{\varphi_1}_{c_1} \times \overline{\Omega}^{\varphi_2}_{c_2}} \bigl((\bar u_1, \bar u_2), K(\alpha,\varphi,a,c)\bigr)
= 1,
\]
which completes the proof. \qed

To prove the main theorem of this section, we first introduce the following result, which can be regarded as a consequence of Proposition \ref{prop1}.

\begin{lemma}
	\label{lemma}
	Let $a_1,b_1,c_1\in \mathbb{R}_+$ be such that $a_1<b_1\leq c_1$. Suppose that $T:\overline{\Omega}^{\varphi_1}_{c_1}\times \overline{\Omega}^{\varphi_2}_{c_2}\rightarrow K_1\times \overline{\Omega}^{\varphi_2}_{c_2}$ is a compact operator such that 
		\begin{enumerate}[(a)]
		\item $\{u_1\in \overline{\Omega}_{c_1}^{\varphi_1}[\alpha_1,\beta_1,a_1,b_1]: \alpha_1(u_1)>a_1\, \}\neq \emptyset$ and $\alpha_1(T_1u)>a_1$ for $u\in \overline{\Omega}_{c_1}^{\varphi_1}[\alpha_1,\beta_1,a_1,b_1]\times \overline{\Omega}_{c_2}^{\varphi_2}$ with $\alpha_1(u_1)=a_1$;
		\item $\varphi_1(T_1 u)\leq c_1$ for $u_1\in K_1[\alpha_1,\varphi_1,a_1,c_1]\times \overline{\Omega}^{\varphi_2}_{c_2}$;
		\item $\alpha_1(T_1 u)>a_1$ for $u_1\in K_1[\alpha_1,\varphi_1,a_1,c_1]\times \overline{\Omega}^{\varphi_2}_{c_2}$ with $\alpha_1(u_1)=a_1$ and $\beta_1(T_1u)>b_1$.
	\end{enumerate}
	Then 
	\[i_{\overline{\Omega}^{\varphi_1}_{c_1}\times \overline{\Omega}^{\varphi_2}_{c_2}}\left(T, K_1(\alpha_1,\varphi_1, a_1, c_1)\times\overline{\Omega}^{\varphi_2}_{c_2}\right)=1.\]
\end{lemma}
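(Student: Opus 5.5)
The plan is to reduce the lemma to a direct homotopy argument of the same kind as in Proposition~\ref{prop1}. The second-component condition is replaced by the invariance $T_2(\overline{\Omega}^{\varphi_1}_{c_1}\times\overline{\Omega}^{\varphi_2}_{c_2})\subset\overline{\Omega}^{\varphi_2}_{c_2}$. Since the index is computed relative to the retract $\overline{\Omega}^{\varphi_1}_{c_1}\times\overline{\Omega}^{\varphi_2}_{c_2}$, the set $K_1(\alpha_1,\varphi_1,a_1,c_1)\times\overline{\Omega}^{\varphi_2}_{c_2}$ is relatively open. Its relative boundary is only
\[
\{u_1\in\overline{\Omega}^{\varphi_1}_{c_1}:\alpha_1(u_1)=a_1\}\times\overline{\Omega}^{\varphi_2}_{c_2},
\]
because the second factor is the whole second component of the retract. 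Hence no boundary condition on $T_2$ is needed at all.

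\textbf{Step 1: the index is well defined.} I will show that $T$ maps $K_1[\alpha_1,\varphi_1,a_1,c_1]\times\overline{\Omega}^{\varphi_2}_{c_2}$ into the retract, using (b) for the first component and the hypothesis on the range for the second. Then I will exclude fixed points with $\alpha_1(u_1)=a_1$ exactly as in the first part of the proof of Proposition~\ref{prop1}. If $\beta_1(u_1)\le b_1$, condition (a) gives $\alpha_1(T_1u)>a_1=\alpha_1(u_1)$. If $\beta_1(u_1)>b_1$, then $\beta_1(T_1u)=\beta_1(u_1)>b_1$ and (c) gives the same contradiction.

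\textbf{Step 2: the homotopy.} I fix $\bar u_1\in\overline{\Omega}^{\varphi_1}_{c_1}[\alpha_1,\beta_1,a_1,b_1]$ with $\alpha_1(\bar u_1)>a_1$, which exists by (a), and any $\bar u_2\in\overline{\Omega}^{\varphi_2}_{c_2}$, for instance $\bar u_2=0$. Then I define
\[
h(u,t)=\bigl(tT_1u+(1-t)\bar u_1,\; tT_2u+(1-t)\bar u_2\bigr).
\]
By convexity of $\varphi_1,\varphi_2$, this takes values in $\overline{\Omega}^{\varphi_1}_{c_1}\times\overline{\Omega}^{\varphi_2}_{c_2}$, and it is compact. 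Admissibility on the boundary $\alpha_1(u_1)=a_1$ follows verbatim from the convexity/concavity argument in Proposition~\ref{prop1}. If $\beta_1(T_1u)>b_1$, condition (c) and concavity of $\alpha_1$ give $\alpha_1(u_1)>a_1$. Otherwise, convexity of $\beta_1$ yields $\beta_1(u_1)\le b_1$, and then (a) gives the same conclusion.

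\textbf{Step 3: conclusion.} Homotopy invariance and normalization then give index $1$, since the constant map $(\bar u_1,\bar u_2)$ lies in $K_1(\alpha_1,\varphi_1,a_1,c_1)\times\overline{\Omega}^{\varphi_2}_{c_2}$. The only delicate point, which I would check most carefully, is the openness and boundary description in the retract. Alternatively, this can be phrased as an application of Proposition~\ref{prop1} with $U=\Omega^{\varphi_2}_{c_2}$ plus excision. That route, however, needs condition (B) on $\partial U$, which may fail, so the direct homotopy is preferable.
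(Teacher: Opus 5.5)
Your proof is correct and is essentially the paper's argument: the Proposition~\ref{prop1} homotopy toward a constant map, with the index computed in the retract $\overline{\Omega}^{\varphi_1}_{c_1}\times\overline{\Omega}^{\varphi_2}_{c_2}$. The paper keeps $tT_2u$ in the second component and first checks $T_2u\neq\lambda u_2$ for $\varphi_2(u_2)=c_2$, $\lambda>1$. Your observation that $\{\varphi_2(u_2)=c_2\}$ contributes nothing to the relative boundary makes that check unnecessary, and your convex combination $tT_2u+(1-t)\bar u_2$ (as in Theorem~\ref{th1}) removes any need for $0\in\overline{\Omega}^{\varphi_2}_{c_2}$. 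For that reason, keep $\bar u_2$ arbitrary rather than choosing $\bar u_2=0$, unless $\varphi_2(0)\le c_2$.
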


\noindent
{\bf Proof.} Fix $u\in \overline{\Omega}^{\varphi_1}_{c_1}\times \overline{\Omega}^{\varphi_2}_{c_2}$. We claim that
\[
T_2 u \neq \lambda u_2
\quad \text{for } \varphi_2(u_2)=c_2
\text{ and all } \lambda > 1.
\]
Indeed, suppose by contradiction that there exist $\bar u\in\overline{\Omega}^{\varphi_1}_{c_1}\times \overline{\Omega}^{\varphi_2}_{c_2}$ with $\varphi_2(\bar{u}_2)=c_2$ and $\tilde\lambda> 1$ such that $T_2\bar u = \tilde\lambda \bar u_2$. Since $\varphi_2$ is convex, we obtain
\[
c_2 = \varphi_2(\bar u_2)
= \varphi_2\!\left(\frac{1}{\tilde\lambda}T_2\bar u\right)
\leq \frac{1}{\tilde\lambda}\varphi_2(T_2\bar u),
\]
which implies $\varphi_2(T_2\bar u)> c_2$, contradicting $T_2(\overline{\Omega}^{\varphi_1}_{c_1}\times \overline{\Omega}^{\varphi_2}_{c_2})\subset\overline{\Omega}^{\varphi_2}_{c_2}$. Thus, by applying the reasoning from the proof of Proposition \ref{prop1} and considering $\overline{\Omega}^{\varphi_1}_{c_1}\times \overline{\Omega}^{\varphi_2}_{c_2}$ as the retract for the computation of the index, the result follows. \qed

Using these computations, we now establish the vectorial version of the Leggett-Williams fixed point theorem, which ensures the existence of nine distinct fixed points.
\begin{theorem}
	\label{th2}
		\emph{(Vectorial version of Leggett-Williams fixed point theorem)}
	Let $a_j,b_j,c_j,d_j\in \mathbb{R}_+$ be such that $d_j<a_j<b_j\leq c_j$ $(j=1,2)$. Suppose that $T:\overline{\Omega}^{\varphi_1}_{c_1}\times \overline{\Omega}^{\varphi_2}_{c_2}\rightarrow \overline{\Omega}^{\varphi_1}_{c_1}\times \overline{\Omega}^{\varphi_2}_{c_2}$ is a compact operator such that for both $j\in\{1,2\}$ the following conditions are satisfied:
	\begin{enumerate}[(a)]
		\item $\{u_j\in \overline{\Omega}_{c_j}^{\varphi_j}[\alpha_j,\beta_j,a_j,b_j]: \alpha_j(u_j)>a_j\, \}\neq \emptyset$ and $\alpha_j(T_ju)>a_j$ for $u_j\in \overline{\Omega}_{c_j}^{\varphi_j}[\alpha_j,\beta_j,a_j,b_j]$, $ u_i\in \overline{\Omega}_{c_i}^{\varphi_i}$ with $\alpha_j(u_j)=a_j$ $(i\neq j)$;
		\item $\phi_j(T_j u)< d_j$ for $u_j\in\overline{\Omega}^{\varphi_j}_{c_j}[\phi_j,d_j]$, $u_i\in \overline{\Omega}^{\varphi_i}_{c_i}$ $(i\neq j)$;
		\item $\alpha_j(T_j u)>a_j$ for $u_j\in K_j[\alpha_j,\varphi_j,a_j,c_j]$, $u_i\in \overline{\Omega}^{\varphi_i}_{c_i}$ with $\alpha_j(u_j)=a_j$ and $\beta_j(T_ju)>b_j$ $(i\neq j)$.
	\end{enumerate}
	Then, $T$ admits at least nine distinct fixed points in $\overline{\Omega}^{\varphi_1}_{c_1}\times \overline{\Omega}^{\varphi_2}_{c_2}$.
\end{theorem}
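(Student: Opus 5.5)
The plan is to split each factor $\overline{\Omega}^{\varphi_j}_{c_j}$ into three relatively open, pairwise disjoint pieces, take products to get nine open subsets of $\overline{\Omega}^{\varphi_1}_{c_1}\times\overline{\Omega}^{\varphi_2}_{c_2}$, and show that the index of $T$ is nonzero on each of them. For $j\in\{1,2\}$ set
\[
V^j_1:=K_j(\alpha_j,\varphi_j,a_j,c_j),\qquad V^j_2:=\overline{\Omega}^{\varphi_j}_{c_j}(\phi_j,d_j),\qquad V^j_3:=\overline{\Omega}^{\varphi_j}_{c_j}\setminus\bigl(\overline{V^j_1}\cup\overline{V^j_2}\bigr).
\]
Since $\alpha_j\le\phi_j$ and $d_j<a_j$, the closures of $V^j_1$ and $V^j_2$ are disjoint. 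The nine candidate sets are then $V^1_k\times V^2_l$ for $k,l\in\{1,2,3\}$.

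First I compute the indices on the basic product sets.
\begin{enumerate}[(1)]
\item By Theorem~\ref{th1}, $i(T,V^1_1\times V^2_1)=1$. Hypothesis (b) of Theorem~\ref{th1} holds because $T$ maps into $\overline{\Omega}^{\varphi_1}_{c_1}\times\overline{\Omega}^{\varphi_2}_{c_2}$.
\item By Lemma~\ref{lemma} and its symmetric version, $i(T,V^1_1\times\overline{\Omega}^{\varphi_2}_{c_2})=1$ and $i(T,\overline{\Omega}^{\varphi_1}_{c_1}\times V^2_1)=1$.
\item By Proposition~\ref{prop_index}, applied with $R_1$ the whole product (which $T$ leaves invariant), the index on the whole product is $1$.
\item For the sets involving $V^j_2$, I use Proposition~\ref{prop3}(i) with $\psi=\phi_1$ and $d=d_1$. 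The required invariance $T_1(\overline{\Omega}^{\varphi_1}_{c_1}[\phi_1,d_1]\times U)\subset\overline{\Omega}^{\varphi_1}_{c_1}(\phi_1,d_1)$ is exactly hypothesis (b). In the second component I take either $U=\overline{\Omega}^{\varphi_2}_{c_2}$ (relatively open in the retract), or $U=V^2_2$. In the latter case condition \eqref{ec1} follows from (b) for $j=2$ by the convexity argument in the proof of Proposition~\ref{prop3}. This gives $i(T,V^1_2\times V^2_2)=1$ and $i(T,V^1_2\times\overline{\Omega}^{\varphi_2}_{c_2})=1$, and symmetrically for the other component.
\item For the mixed products $V^1_1\times V^2_2$ and $V^1_2\times V^2_1$, I redo the homotopy of Proposition~\ref{prop1}: homotope $T_1$ toward $\bar u_1$ as there, and $T_2$ by $tT_2$, using \eqref{ec1}/\eqref{ec2} for the $\phi_2$-part. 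This should give index $1$. Everything here is computed relative to the retract $\overline{\Omega}^{\varphi_1}_{c_1}\times\overline{\Omega}^{\varphi_2}_{c_2}$; since $0$ may not lie in $U$, the normalization step needs care, and I will check it.
\end{enumerate}

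Next I obtain the remaining indices by additivity. The boundary of each piece carries no fixed points: on $\alpha_j=a_j$ this follows from (a) and (c), as in Proposition~\ref{prop1}, and on $\phi_j=d_j$ from (b). Then, in one factor,
\[
\overline{\Omega}^{\varphi_j}_{c_j}=V^j_1\cup V^j_2\cup V^j_3 \quad\text{up to fixed-point-free boundaries}.
\]
Additivity gives $i(T,V^1_3\times\overline{\Omega}^{\varphi_2}_{c_2})=1-1-1=-1$, and likewise $i(T,V^1_3\times V^2_1)=-1$ and $i(T,V^1_3\times V^2_2)=-1$. Applying additivity once more in the second factor yields
\[
i(T,V^1_3\times V^2_3)=(-1)-(-1)-(-1)=1.
\]
In general $i(T,V^1_k\times V^2_l)=\varepsilon_k\varepsilon_l$ with $\varepsilon=(1,1,-1)$, so all nine indices are nonzero. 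The existence property then yields nine distinct fixed points, and four of them lie in $\bigl(V^1_1\cup V^1_3\bigr)\times\bigl(V^2_1\cup V^2_3\bigr)$, where both components are nontrivial.

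The main obstacle is the mixed index computations and the verification of admissibility in them: hypotheses (a) and (c) are stated only for $u_i$ ranging over the whole $\overline{\Omega}^{\varphi_i}_{c_i}$, so I must check that restricting the other component to $V^i_k$ keeps the boundaries fixed-point free. A cleaner route I would try first is to get every mixed index purely from additivity, starting from the "strip" indices (full factor in one slot), which avoids any new homotopy except in Theorem~\ref{th1}-type cases.
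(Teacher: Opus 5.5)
Your proposal follows the paper's proof: the same nine product pieces, index $1$ on $V^1_1\times V^2_1$, on the two strips and on the mixed sets $V^1_1\times V^2_2$, $V^1_2\times V^2_1$ via Theorem~\ref{th1}, Lemma~\ref{lemma} and Proposition~\ref{prop1}, then the same additivity bookkeeping giving $\varepsilon_k\varepsilon_l$. (The requirement $0\in V^2_2$ follows from the unstated normalization $\phi_j(0)=0$ that the convexity step $\phi_i(\tfrac{1}{\lambda}v)\le\tfrac{1}{\lambda}\phi_i(v)$ already relies on, and the paper handles the $\phi$-pieces and the whole product by Proposition~\ref{prop_index} rather than Proposition~\ref{prop3}.) Your announced obstacle is not real, because (a)--(c) are assumed for every $u_i\in\overline{\Omega}^{\varphi_i}_{c_i}$ and therefore hold on any sub-piece; but do not replace step (5) by the ``cleaner route'', since the row, column and total additivity relations together with $i(T,V^1_1\times V^2_1)=i(T,V^1_2\times V^2_2)=1$ leave $i(T,V^1_1\times V^2_2)$ and $i(T,V^1_2\times V^2_1)$ as two free parameters, so their direct computation is indispensable.
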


\noindent
{\bf Proof.} Denote $\mathcal{U}_1 := K(\alpha,\varphi,a,c)$. Observe that $T_j\bigl(\overline{\Omega}^{\varphi_1}_{c_1} \times \overline{\Omega}^{\varphi_2}_{c_2}\bigr) \subset \overline{\Omega}^{\varphi_j}_{c_j}, \, j \in \{1,2\}.$ Hence,
\[
\varphi_j(T_j u) \le c_j \quad \text{for } u_j \in K_j[\alpha_j,\varphi_j,a_j,c_j],\ u_i \in \overline{\Omega}^{\varphi_i}_{c_i}\ (i \neq j).
\]
It follows from Theorem~\ref{th1} that $i_{\overline{\Omega}^{\varphi_1}_{c_1} \times \overline{\Omega}^{\varphi_2}_{c_2}}(T, \mathcal{U}_1) = 1,$ ensuring the existence of a fixed point of $T$ in $\mathcal{U}_1$.

For the sake of clarity in the sequel, we shall employ the following notation
\[
\begin{aligned}
	&\mathcal{U}_2 := K_1(\alpha_1,\varphi_1,a_1,c_1)\times \overline{\Omega}^{\varphi_2}_{c_2}(\phi_2,d_2), \qquad
	&&\mathcal{U}_3 :=\overline{\Omega}^{\varphi_1}_{c_1}(\phi_1,d_1)\times K_2(\alpha_2,\varphi_2,a_2,c_2),\\
	&\mathcal{V}_1 := K_1(\alpha_1,\varphi_1,a_1,c_1)\times \overline{\Omega}^{\varphi_2}_{c_2}, \qquad
	&&\mathcal{V}_2 := \overline{\Omega}^{\varphi_1}_{c_1}\times K_2(\alpha_2,\varphi_2,a_2,c_2).
\end{aligned}
\]

Fix $j\in\{1,2\}$ and let $u_j\in \overline{\Omega}^{\varphi_j}_{c_j}$. It is clear that
\[
T_i u \neq \lambda u_i
\quad \text{for } \phi_i(u_i)=d_i
\text{ and all } \lambda \geq 1,\quad (i\neq j).
\]
Indeed, suppose by contradiction that there exist $\bar u\in \overline{\Omega}^{\varphi_1}_{c_1}\times \overline{\Omega}^{\varphi_2}_{c_2}$ with $\phi_i(\bar u_i)=d_i$ and $\tilde\lambda\geq 1$ such that $T_i\bar u = \tilde\lambda \bar u_i$. Then, since $\phi_i$ is convex, we obtain
\[
d_i = \phi_i(\bar u_i)
= \phi_i\!\left(\frac{1}{\tilde\lambda}T_i\bar u\right)
\leq \frac{1}{\tilde\lambda}\phi_i(T_i\bar u),
\]
which implies $\phi_i(T_i\bar u)\geq d_i$, contradicting assumption~$(b)$.  Hence, Proposition~\ref{prop1} applies to $T$, yielding
\[
	i_{\overline{\Omega}^{\varphi_1}_{c_1}\times \overline{\Omega}^{\varphi_2}_{c_2}}(T,\mathcal{U}_2)
	=i_{\overline{\Omega}^{\varphi_1}_{c_1}\times \overline{\Omega}^{\varphi_2}_{c_2}}(T,\mathcal{U}_3)
	=1.
\]

On the other hand, for a $j\in\{1,2\}$, conditions $(a)$--$(b)$--$(c)$ are satisfied together with $T_i(\overline{\Omega}^{\varphi_1}_{c_1}\times \overline{\Omega}^{\varphi_2}_{c_2})\subset\overline{\Omega}^{\varphi_i}_{c_i}$ $(i\neq j)$. Therefore, by applying Lemma \ref{lemma}, we find
\[i_{\overline{\Omega}^{\varphi_1}_{c_1}\times \overline{\Omega}^{\varphi_2}_{c_2}}(T,\mathcal{V}_1)
=i_{\overline{\Omega}^{\varphi_1}_{c_1}\times \overline{\Omega}^{\varphi_2}_{c_2}}(T,\mathcal{V}_2)=1.\]

Let us introduce the following notation for what follows
\[
	\mathcal{U}_4 :=\overline{\Omega}^{\varphi_1}_{c_1}(\phi_1,d_1)\times \overline{\Omega}^{\varphi_2}_{c_2}(\phi_2,d_2), \,
	\mathcal{V}_3 := \overline{\Omega}^{\varphi_1}_{c_1}(\phi_1,d_1)\times \overline{\Omega}^{\varphi_2}_{c_2},\,
	\mathcal{V}_4 := \overline{\Omega}^{\varphi_1}_{c_1}\times \overline{\Omega}^{\varphi_2}_{c_2}(\phi_2,d_2), \,
	\mathcal{V}_5 := \overline{\Omega}^{\varphi_1}_{c_1}\times \overline{\Omega}^{\varphi_2}_{c_2}.
\]

Note that $\overline{\mathcal{U}}_4$, $\overline{\mathcal{V}}_3$, $\overline{\mathcal{V}}_4$, and $\overline{\mathcal{V}}_5$ are all retracts of $\overline{\Omega}^{\varphi_1}_{c_1} \times \overline{\Omega}^{\varphi_2}_{c_2}$. It then follows from $(b)$ that Proposition~\ref{prop_index} can be applied to $T$, obtaining
\[
	i_{\overline{\Omega}^{\varphi_1}_{c_1}\times \overline{\Omega}^{\varphi_2}_{c_2}}(T,\mathcal{U}_4)
	=i_{\overline{\Omega}^{\varphi_1}_{c_1}\times \overline{\Omega}^{\varphi_2}_{c_2}}(T,\mathcal{V}_3)
	=i_{\overline{\Omega}^{\varphi_1}_{c_1}\times \overline{\Omega}^{\varphi_2}_{c_2}}(T,\mathcal{V}_4)
	=i_{\overline{\Omega}^{\varphi_1}_{c_1}\times \overline{\Omega}^{\varphi_2}_{c_2}}(T,\mathcal{V}_5)
	=1.
\]

We now compute the fixed point index of $T$ on the sets (see Figure \ref{fig2})
\[
\begin{aligned}
	&\mathcal{U}_5 := \mathcal{V}_1\setminus(\overline{\mathcal{U}}_1\cup \overline{\mathcal{U}}_2), \qquad
	\mathcal{U}_6 := \mathcal{V}_2\setminus(\overline{\mathcal{U}}_1\cup \overline{\mathcal{U}}_3), \qquad
	\mathcal{U}_7 := \mathcal{V}_3\setminus(\overline{\mathcal{U}}_3\cup \overline{\mathcal{U}}_4), \\
	&\mathcal{U}_8 := \mathcal{V}_4\setminus(\overline{\mathcal{U}}_2\cup \overline{\mathcal{U}}_4), \qquad
	\mathcal{V}_6 := \mathcal{V}_5\setminus(\overline{\mathcal{V}}_1\cup \overline{\mathcal{V}}_3), \qquad
	\mathcal{U}_9 := \mathcal{V}_6\setminus(\overline{\mathcal{U}}_6\cup \overline{\mathcal{U}}_8),
\end{aligned}
\]
by means of the additivity property of the fixed point index
\[\begin{aligned}
	&i_{\overline{\Omega}^{\varphi_1}_{c_1}\times\overline{\Omega}^{\varphi_2}_{c_2}}(T,\mathcal{U}_5)=i_{\overline{\Omega}^{\varphi_1}_{c_1}\times\overline{\Omega}^{\varphi_2}_{c_2}}(T,\mathcal{V}_1)-i_{\overline{\Omega}^{\varphi_1}_{c_1}\times\overline{\Omega}^{\varphi_2}_{c_2}}(T,\mathcal{U}_1)-i_{\overline{\Omega}^{\varphi_1}_{c_1}\times\overline{\Omega}^{\varphi_2}_{c_2}}(T,\mathcal{U}_2)=-1,\\
	&i_{\overline{\Omega}^{\varphi_1}_{c_1}\times\overline{\Omega}^{\varphi_2}_{c_2}}(T,\mathcal{U}_6)=i_{\overline{\Omega}^{\varphi_1}_{c_1}\times\overline{\Omega}^{\varphi_2}_{c_2}}(T,\mathcal{V}_2)-i_{\overline{\Omega}^{\varphi_1}_{c_1}\times\overline{\Omega}^{\varphi_2}_{c_2}}(T,\mathcal{U}_1)-i_{\overline{\Omega}^{\varphi_1}_{c_1}\times\overline{\Omega}^{\varphi_2}_{c_2}}(T,\mathcal{U}_3)=-1,\\
	&i_{\overline{\Omega}^{\varphi_1}_{c_1}\times\overline{\Omega}^{\varphi_2}_{c_2}}(T,\mathcal{U}_7)=i_{\overline{\Omega}^{\varphi_1}_{c_1}\times\overline{\Omega}^{\varphi_2}_{c_2}}(T,\mathcal{V}_3)-i_{\overline{\Omega}^{\varphi_1}_{c_1}\times\overline{\Omega}^{\varphi_2}_{c_2}}(T,\mathcal{U}_3)-i_{\overline{\Omega}^{\varphi_1}_{c_1}\times\overline{\Omega}^{\varphi_2}_{c_2}}(T,\mathcal{U}_4)=-1,\\
	&i_{\overline{\Omega}^{\varphi_1}_{c_1}\times\overline{\Omega}^{\varphi_2}_{c_2}}(T,\mathcal{U}_8)=i_{\overline{\Omega}^{\varphi_1}_{c_1}\times\overline{\Omega}^{\varphi_2}_{c_2}}(T,\mathcal{V}_4)-i_{\overline{\Omega}^{\varphi_1}_{c_1}\times\overline{\Omega}^{\varphi_2}_{c_2}}(T,\mathcal{U}_2)-i_{\overline{\Omega}^{\varphi_1}_{c_1}\times\overline{\Omega}^{\varphi_2}_{c_2}}(T,\mathcal{U}_4)=-1,\\
	&i_{\overline{\Omega}^{\varphi_1}_{c_1}\times\overline{\Omega}^{\varphi_2}_{c_2}}(T,\mathcal{V}_6)=i_{\overline{\Omega}^{\varphi_1}_{c_1}\times\overline{\Omega}^{\varphi_2}_{c_2}}(T,\mathcal{V}_5)-i_{\overline{\Omega}^{\varphi_1}_{c_1}\times\overline{\Omega}^{\varphi_2}_{c_2}}(T,\mathcal{V}_1)-i_{\overline{\Omega}^{\varphi_1}_{c_1}\times\overline{\Omega}^{\varphi_2}_{c_2}}(T,\mathcal{V}_3)=-1,\\
	&i_{\overline{\Omega}^{\varphi_1}_{c_1}\times\overline{\Omega}^{\varphi_2}_{c_2}}(T,\mathcal{U}_9)=i_{\overline{\Omega}^{\varphi_1}_{c_1}\times\overline{\Omega}^{\varphi_2}_{c_2}}(T,\mathcal{V}_6)-i_{\overline{\Omega}^{\varphi_1}_{c_1}\times\overline{\Omega}^{\varphi_2}_{c_2}}(T,\mathcal{U}_6)-i_{\overline{\Omega}^{\varphi_1}_{c_1}\times\overline{\Omega}^{\varphi_2}_{c_2}}(T,\mathcal{U}_8)=1.
\end{aligned}\]

Finally, the existence property of the fixed point index guarantees the existence of at least one fixed point of $T$ in each of the sets $\mathcal{U}_k$, $k\in\{1,\dots,9\}$. Moreover, all these sets are pairwise distinct. \qed

\begin{figure}[h]
	\centering
	\subfigure[$i_{\overline{\Omega}^{\varphi_1}_{c_1}\times\overline{\Omega}^{\varphi_2}_{c_2}}(T,\mathcal{U}_1)=1$.]{
		\includegraphics[scale=0.58]{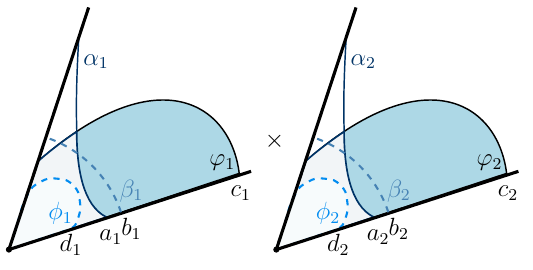}}
	\subfigure[$i_{\overline{\Omega}^{\varphi_1}_{c_1}\times\overline{\Omega}^{\varphi_2}_{c_2}}(T,\mathcal{U}_2)=1$.]{\includegraphics[scale=0.58]{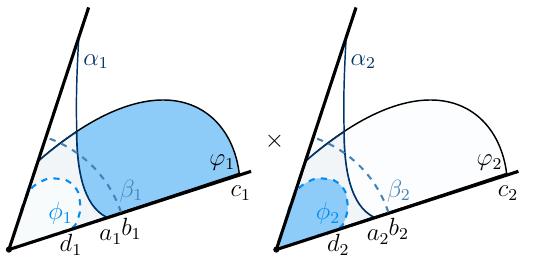}
	}
	\subfigure[$i_{\overline{\Omega}^{\varphi_1}_{c_1}\times\overline{\Omega}^{\varphi_2}_{c_2}}(T,\mathcal{U}_3)=1$.]{\includegraphics[scale=0.58]{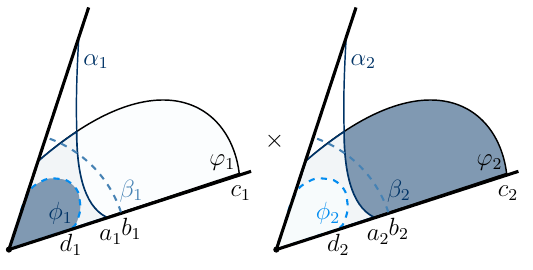}
	}
	\subfigure[$i_{\overline{\Omega}^{\varphi_1}_{c_1}\times\overline{\Omega}^{\varphi_2}_{c_2}}(T,\mathcal{U}_4)=1$.]{
		\includegraphics[scale=0.58]{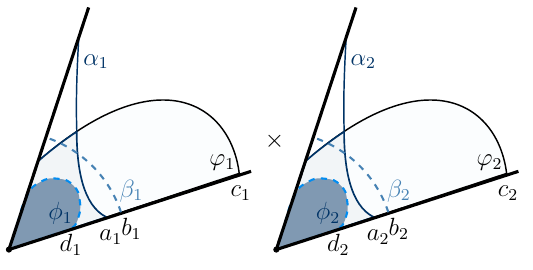}}
	\subfigure[$i_{\overline{\Omega}^{\varphi_1}_{c_1}\times\overline{\Omega}^{\varphi_2}_{c_2}}(T,\mathcal{U}_5)=-1$.]{\includegraphics[scale=0.58]{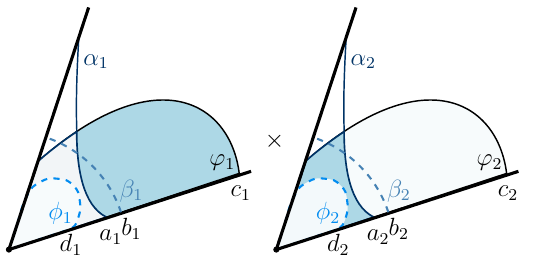}
	}
	\subfigure[$i_{\overline{\Omega}^{\varphi_1}_{c_1}\times\overline{\Omega}^{\varphi_2}_{c_2}}(T,\mathcal{U}_6)=-1$.]{\includegraphics[scale=0.58]{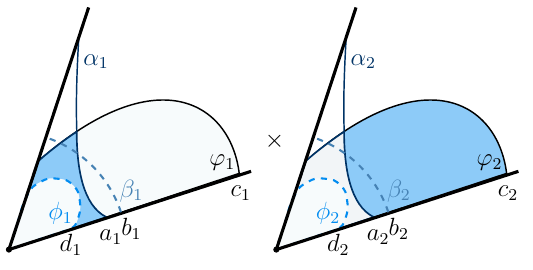}
	}
	\subfigure[$i_{\overline{\Omega}^{\varphi_1}_{c_1}\times\overline{\Omega}^{\varphi_2}_{c_2}}(T,\mathcal{U}_7)=-1$.]{
		\includegraphics[scale=0.58]{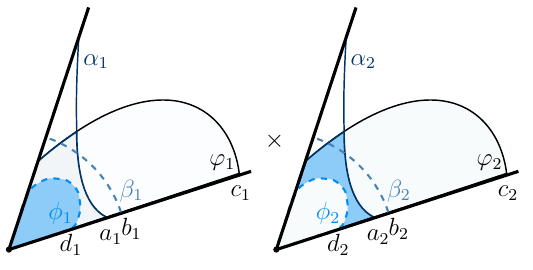}}
	\subfigure[$i_{\overline{\Omega}^{\varphi_1}_{c_1}\times\overline{\Omega}^{\varphi_2}_{c_2}}(T,\mathcal{U}_8)=-1$.]{\includegraphics[scale=0.58]{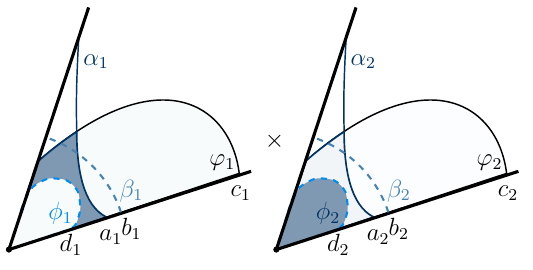}
	}
	\subfigure[$i_{\overline{\Omega}^{\varphi_1}_{c_1}\times\overline{\Omega}^{\varphi_2}_{c_2}}(T,\mathcal{U}_9)=1$.]{\includegraphics[scale=0.58]{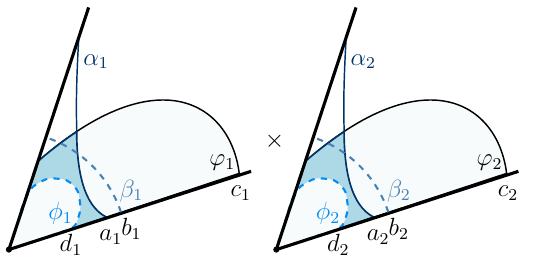}
	}
	\caption{Localization of the nine fixed points under the assumptions of Theorem~\ref{th2}.}
	\label{fig2}
\end{figure}

\begin{remark}
	Under the assumptions of Theorem~\ref{th2}, the operator possesses at least four coexistence fixed points, which are localized in \(\mathcal{U}_1\), \(\mathcal{U}_5\), \(\mathcal{U}_6\), and \(\mathcal{U}_9\), respectively. This is clearly illustrated by the sets depicted in Figure~\ref{fig2}.
\end{remark}

As in the previous section, in Theorem~\ref{th2} we may relax the requirement $T\bigl(\overline{\Omega}_{c_1}^{\varphi_1}\times\overline{\Omega}_{c_2}^{\varphi_2}\bigr)\subset\overline{\Omega}_{c_1}^{\varphi_1}\times\overline{\Omega}_{c_2}^{\varphi_2},$ while still ensuring four solutions, one with both components nontrivial, as established in the following corollary by the appropriate modification of condition \emph{(c)}.

\begin{corollary}
		\label{coro4.1}
Let $a_j,c_j,d_j\in \mathbb{R}_+$ be such that $d_j<a_j<c_j$ $(j=1,2)$. Suppose that $T:\overline{\Omega}^{\varphi_1}_{c_1}\times \overline{\Omega}^{\varphi_2}_{c_2}\rightarrow K_1\times K_2$ is a compact operator such that for both $j\in\{1,2\}$ the following conditions are satisfied:
\begin{enumerate}[(a)]
	\item $\{u_j\in K_j[\alpha_j,\beta_j,a_j,c_j]: \alpha_j(u_j)>a_j\, \}\neq \emptyset$ and $\alpha_j(T_ju)>a_j$ for $u_j\in K_j[\alpha_j,\beta_j,a_j,c_j]$, $ u_i\in \overline{\Omega}_{c_i}^{\varphi_i}$ with $\alpha_j(u_j)=a_j$, $(i\neq j)$;
	\item $\phi_j(T_j u)< d_j$ for $u_j\in\overline{\Omega}^{\varphi_j}_{c_j}[\phi_j,d_j]$, $u_i\in \overline{\Omega}^{\varphi_i}_{c_i}$ $(i\neq j)$;
	\item $\alpha_j(T_j u)>\frac{a_j}{c_j}\varphi_j(T_j u)$ for $u_j\in K_j[\alpha_j,\varphi_j,a_j,c_j]$, $u_i\in \overline{\Omega}^{\varphi_i}_{c_i}$ with $\alpha_j(u_j)=a_j$ and $\varphi_j(T_ju)>c_j$ $(i\neq j)$.
\end{enumerate}
Then, $T$ admits at least four distinct fixed points in $\overline{\Omega}^{\varphi_1}_{c_1}\times \overline{\Omega}^{\varphi_2}_{c_2}$.
\end{corollary}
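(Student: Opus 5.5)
We follow the strategy of Corollary~\ref{Coro_Kras_LW}, now applied component-wise. Define the truncated operator $N:\overline{\Omega}^{\varphi_1}_{c_1}\times \overline{\Omega}^{\varphi_2}_{c_2}\rightarrow \overline{\Omega}^{\varphi_1}_{c_1}\times \overline{\Omega}^{\varphi_2}_{c_2}$ by setting, for each $j\in\{1,2\}$,
\[
N_j u=\begin{cases} T_j u, & \text{if } \varphi_j(T_ju)\le c_j,\\[2pt] \dfrac{c_j\,T_j u}{\varphi_j(T_j u)}, & \text{if } \varphi_j(T_ju)> c_j.\end{cases}
\]
$N$ is compact, because it is the composition of $T$ with a continuous map that sends bounded sets to bounded sets; the set $T(\overline{\Omega}^{\varphi_1}_{c_1}\times \overline{\Omega}^{\varphi_2}_{c_2})$ is relatively compact, and $\varphi_j$ is continuous. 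Also, $\varphi_j(N_ju)\le c_j$, since $\varphi_j(\lambda v)\le\lambda\varphi_j(v)$ for $\lambda\in[0,1]$ by convexity of $\varphi_j$ and $\varphi_j(0)\ge0$. Strictly speaking, positive homogeneity of $\varphi_j$ is needed here, or at least the inequality, which suffices. Hence $N$ maps the product into itself.

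We then verify the hypotheses of Theorem~\ref{th2} for $N$ with $\beta_j=\varphi_j$ and $b_j=c_j$.
\begin{enumerate}[(i)]
\item Condition (c) holds vacuously, because $\varphi_j(N_ju)\le c_j$ always.
\item For (b), take $u_j$ with $\phi_j(u_j)\le d_j$. If no truncation occurs, then $\phi_j(N_ju)=\phi_j(T_ju)<d_j$. Otherwise convexity gives $\phi_j(N_ju)\le \frac{c_j}{\varphi_j(T_ju)}\phi_j(T_ju)<d_j$.
\item For (a), note that $\overline{\Omega}_{c_j}^{\varphi_j}[\alpha_j,\varphi_j,a_j,c_j]=K_j[\alpha_j,\varphi_j,a_j,c_j]$. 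Here I read $K_j[\alpha_j,\beta_j,a_j,c_j]$ in hypothesis (a) of the corollary with the relevant $\beta_j$, and I will check that the nonemptiness condition transfers. Take $\alpha_j(u_j)=a_j$. Without truncation we get $\alpha_j(N_ju)=\alpha_j(T_ju)>a_j$ by (a). With truncation, concavity of $\alpha_j$ together with $\alpha_j(0)\ge0$ gives $\alpha_j(N_ju)\ge\frac{c_j}{\varphi_j(T_ju)}\alpha_j(T_ju)>a_j$ by (c).
\end{enumerate}
Theorem~\ref{th2} then yields nine fixed points of $N$, one in each $\mathcal{U}_k$.

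Next we identify which of these fixed points are genuine fixed points of $T$. In each component, a truncated fixed point $\bar u$ cannot have $\alpha_j(\bar u_j)<a_j$. Indeed, if $\varphi_j(T_j\bar u)>c_j$, then
\[
\alpha_j(\bar u_j)=\alpha_j(N_j\bar u)\ge \frac{c_j}{\varphi_j(T_j\bar u)}\alpha_j(T_j\bar u)>a_j,
\]
where the last step uses (c) of the corollary. For this we need (c) to apply at the point in question, but (c) is only assumed when $\alpha_j(u_j)=a_j$, not for general $u_j$. In Corollary~\ref{Coro_Kras_LW} the same computation was used, so I follow that argument, reading it as: a fixed point with $\alpha_j(\bar u_j)<a_j$ and $\varphi_j(T_j\bar u)>c_j$ yields the contradiction above. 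Consequently, in every component $j$ where $\alpha_j(\bar u_j)<a_j$, we have $N_j\bar u=T_j\bar u$.

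The sets $\mathcal{U}_4,\mathcal{U}_7,\mathcal{U}_8,\mathcal{U}_9$ are exactly those in which both components satisfy $\alpha_j<a_j$. The fixed points of $N$ in them are therefore fixed points of $T$, which gives four distinct fixed points. The one in $\mathcal{U}_9$ is a coexistence fixed point: its components lie in $\overline{\Omega}^{\varphi_j}_{c_j}(\alpha_j,a_j)\setminus\overline{\Omega}^{\varphi_j}_{c_j}[\phi_j,d_j]$, so $\phi_j>d_j>0$ and both components are nonzero.

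The main obstacle is the last step, namely justifying that truncation cannot occur at these four fixed points. This requires exactly the same use of (c) as in Corollary~\ref{Coro_Kras_LW}; I would mirror that argument component by component. A secondary point is compactness and continuity of $N$ across the switching set $\varphi_j(T_ju)=c_j$, where the two formulas agree, so $N$ is continuous there.
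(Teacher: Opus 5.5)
Your construction of $N$ and your check that it satisfies the hypotheses of Theorem~\ref{th2} (with $\beta_j=\varphi_j$, $b_j=c_j$) coincide with the paper's proof. The gap is the step you flagged yourself, and it is a real one.

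To rule out truncation at a fixed point $\bar u$ of $N$ with $\alpha_j(\bar u_j)<a_j$, you need $\alpha_j(T_j\bar u)>\frac{a_j}{c_j}\varphi_j(T_j\bar u)$. Hypothesis (c) grants this only when $\alpha_j(u_j)=a_j$. Appealing to Corollary~\ref{Coro_Kras_LW} does not help: the paper's proofs of both corollaries take exactly this unjustified step in their final chain of inequalities. Nothing in (a)--(c) controls $T_j$ at points with $\varphi_j(u_j)=c_j$ and $\alpha_j(u_j)<a_j$. So $N$ can have a fixed point there with $T_j\bar u=\lambda\bar u_j$ and $\lambda>1$, and such a point is not a fixed point of $T$.

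The step cannot be patched, because the statement is false as written. Here is a counterexample.
\begin{itemize}
\item Take $X=Y=\mathbb{R}^2$ and $K_j=\mathbb{R}^2_+$. For $u=(x,y)$ set $\alpha_j(u)=x$ and $\varphi_j(u)=\phi_j(u)=\beta_j(u)=x+y$, with $d_j=1$, $a_j=2$, $c_j=10$.
\item Let $S(x,y)=(p(x),q(x,y))$, where $p(x)=x/2$ on $[0,1]$, $p(x)=\tfrac{5}{2}x-2$ on $[1,2]$, and $p(x)=x+1$ on $[2,10]$.
\item Let $q(x,y)=(1-\theta(x))\tfrac{y}{2}+\theta(x)(y+1)$, with $\theta$ continuous, $\theta=0$ on $[0,1]$ and $\theta=1$ on $[\tfrac{6}{5},10]$. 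Put $T(u_1,u_2)=(Su_1,Su_2)$, which is continuous and hence compact.
\end{itemize}

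The hypotheses hold. If $\alpha_j(u_j)=2$, then $T_ju=(3,y+1)$ with $0\le y\le 8$. Hence $\alpha_j(T_ju)=3>2$, and also $3>\tfrac{1}{5}(y+4)=\tfrac{a_j}{c_j}\varphi_j(T_ju)$, which gives (a) and (c). If $x+y\le 1$, then $\phi_j(T_ju)=\tfrac{x+y}{2}<1$, which gives (b).

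However, $p(x)=x$ only for $x\in\{0,\tfrac{4}{3}\}$, and $q(\tfrac{4}{3},y)=y+1\neq y$. So $S$ fixes only $0$, and $T$ has exactly one fixed point.

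The truncation of $S$ has exactly the fixed points $0$, $(5,5)$ and $\bar u\approx(1.446,\,8.554)$. At $\bar u$ one has $S\bar u\approx 1.117\,\bar u$ and $\alpha(S\bar u)\approx 1.62<\tfrac{a}{c}\varphi(S\bar u)\approx 2.23$. So $N$ has nine fixed points, consistent with Theorem~\ref{th2}. The four lying in $\mathcal{U}_4,\mathcal{U}_7,\mathcal{U}_8,\mathcal{U}_9$ are $(0,0)$, $(0,\bar u)$, $(\bar u,0)$ and $(\bar u,\bar u)$, and only $(0,0)$ is fixed by $T$.

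The repair is to require the inequality in (c) for every $u\in\overline{\Omega}^{\varphi_1}_{c_1}\times\overline{\Omega}^{\varphi_2}_{c_2}$ with $\varphi_j(T_ju)>c_j$; it suffices to require it whenever $\alpha_j(u_j)\le a_j$. With that hypothesis your final display is legitimate and the proof closes. This stronger form is what the estimate in the proof of Theorem~\ref{th5.3} actually establishes, since that estimate only uses $f_j\ge 0$.

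A smaller point: $\varphi_j(\lambda v)\le\lambda\varphi_j(v)$ for $\lambda\in[0,1]$ requires $\varphi_j(0)=0$, not merely $\varphi_j(0)\ge 0$. Convexity only gives the bound $\lambda\varphi_j(v)+(1-\lambda)\varphi_j(0)$. The same is needed for $\phi_j$ in your check of (b), and the paper uses it tacitly as well.
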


\noindent
{\bf Proof.} Consider the auxiliary operator given by 
\begin{equation}
	N u=
	\begin{cases}
		T u, & \text{if } \varphi_j(T_ju)\leq c_j, \, j\in\{1,2\},\\[8pt]
		\left(\dfrac{c_1\,T_1 u}{\varphi_1(T_1 u)},\, T_2 u\right), & \text{if } \varphi_1(T_1u)> c_1, \,\varphi_2(T_2 u)\le c_2,\\[8pt]
		\left(T_1 u, \dfrac{c_2\,T_2 u}{\varphi_2(T_2 u)}\right),& \text{if } \varphi_1(T_1u)\le c_1, \,\varphi_2(T_2 u)> c_2,\\[8pt]
		\left(\dfrac{c_1\,T_1 u}{\varphi_1(T_1 u)}, \dfrac{c_2\,T_2 u}{\varphi_2(T_2 u)}\right), & \text{if } \varphi_j(T_ju)> c_j,\,  j\in\{1,2\}.
	\end{cases}
\end{equation}

It is straightforward to verify that each component of the operator $N$
fulfills the conditions of Theorem~\ref{th2} with
$\beta_j=\varphi_j$ and $b_j=c_j$ for $j=1,2$, by similar reasoning to that in
Corollary~\ref{Coro_Kras_LW}.
Consequently, operator $N$ admits nine fixed points, four of which satisfy
$\alpha_j(u_j)<a_j$ for both $j\in\{1,2\}$ (i.e., $u_j$ denotes the $j$-th
component of a fixed point).

Let $\bar{u}$ be one of these four fixed points and suppose, by contradiction,
that $\varphi_j(T_j\bar{u})>c_j$ for $j\in\{1,2\}$. The concavity of the functional $\alpha_j$ yields
\[
a_j>\alpha_j(\bar{u}_j)
=\alpha_j(N_j\bar{u})
=\alpha_j\!\left(\frac{c_j\,T_j\bar{u}}{\varphi_j(T_j\bar{u})}\right)
\geq \frac{c_j}{\varphi_j(T_j\bar{u})}\,\alpha_j(T_j\bar{u})
>a_j,
\]
which is impossible. Therefore, necessarily
$\varphi_j(T_j\bar{u})\leq c_j$ for $j\in\{1,2\}$, and hence $\bar{u}=N\bar{u}=T\bar{u},$ showing that $\bar{u}$ is a fixed point of the original operator $T$. \qed

\section{Applications to systems of nonlinear differential equations}

First, we consider the following system of second-order differential equations
\begin{equation}
	\label{5.3}
	\left\{
	\begin{aligned}
		& u_1''(t) + f_1(u_1(t), u_2(t)) = 0, \quad t\in[0,1], \\
		& u_2''(t) + f_2(u_1(t), u_2(t)) = 0, \quad t\in[0,1],\\
		& u_1(0) = u_1'(1) = 0= u_2(0) = u_2'(1), 
	\end{aligned}
	\right.
\end{equation}
where $f_1, f_2:[0,\infty)^2   \to [0,\infty)$ are continuous functions.

Let \(\mathcal{X} := \mathcal{C}([0,1]; \mathbb{R})\) be the Banach space of continuous functions on \([0,1]\) endowed with the supremum norm $\|\cdot\|_\infty$, and let \(\mathcal{X}_+ := \mathcal{C}_+([0,1]; \mathbb{R})\) denote the cone of nonnegative continuous functions. It is well known that solutions of the system \eqref{5.3} are precisely the fixed points of the operator $T = (T_1, T_2): \mathcal{X}_+  \times \mathcal{X}_+  \to \mathcal{X} \times \mathcal{X}$ given by
\begin{equation}
	\label{5.4}
	\begin{aligned}
		T_{1}(u_{1}, u_{2})(t) &= \int_{0}^{1} G_{1}(t, s) f_{1}(u_{1}(s), u_{2}(s)) \, ds, \quad t \in [0, 1]; \\
		T_{2}(u_{1}, u_{2})(t) &= \int_{0}^{1} G_{2}(t, s) f_{2}(u_{1}(s), u_{2}(s)) \, ds, \quad t \in [0, 1],
	\end{aligned}
\end{equation}
where the Green's functions are of the form
\[G_1(t,s)=G_2(t,s)=\min\{t,s\}, \text{ for }t\in[0,1] \text{ and } s\in[0,1].\]

\begin{remark}
	\label{re5.1}
	Under the continuity assumption on the nonlinearities, the operator \(T\) is completely continuous. 
	Furthermore, let $\mathcal{P}:= \left\{ u \in \mathcal{X}_+  : u(t) \geq \frac{1}{2}\|u\|_\infty, \, t \in \left[\frac{1}{2},1\right] \right\}$ be a new cone in \(\mathcal{X}\). 
	The nonnegativity of $f_1$ and \(f_2\) implies that $T(\mathcal{X}_+  \times \mathcal{P}) \subset \mathcal{X}_+  \times \mathcal{P}$. Indeed, note that \(T_j u(t) \ge 0\) for all \(t \in [0,1]\) ($j=1,2$). Moreover, since for each \(s \in [0,1]\) the function \(t \mapsto G_2(t,s)\) is nondecreasing, we have
	\begin{equation*}
		\begin{aligned}
			\min_{\frac{1}{2} \leq t \leq 1} T_{2}u(t) &= \int_{0}^{1} G_{2}\left(\frac{1}{2}, s\right) f_{2}(u_{1}(s), u_{2}(s)) \, ds \\
			&= \int_{0}^{1/2} s f_{2}(u_{1}(s), u_{2}(s)) \, ds + \int_{1/2}^{1} \frac{1}{2} f_{2}(u_{1}(s), u_{2}(s)) \, ds \\
			&\geq \frac{1}{2} \left[ \int_{0}^{1/2} s f_{2}(u_{1}(s), u_{2}(s)) \, ds + \int_{1/2}^{1} s f_{2}(u_{1}(s), u_{2}(s)) \, ds \right] \\
			&= \frac{1}{2} \int_{0}^{1} s f_{2}(u_{1}(s), u_{2}(s)) \, ds = \frac{T_{2}u(1)}{2} = \frac{\|T_{2}u\|_\infty}{2}.
		\end{aligned}
	\end{equation*}
\end{remark}

Now, in order to determine conditions for the existence of three solutions for the system \eqref{5.3}, we apply Theorem \ref{th_Kras_LW}, treating the second component in the expansive case. Note that, one can find conditions for this purpose by applying the classical Leggett-Williams theorem, however it will not ensure that at least two of the solutions have both components nontrivial. Observe also that, while the classical Leggett-Williams theorem imposes compression on the exterior boundary, Theorem \ref{th_Kras_LW} has opened the possibility of expansion in the second component, allowing the second nonlinearity to exhibit different behaviours.

\begin{theorem}
	\label{th5.1}
	Assume that there exists constants $a,c,d,r,R\in\mathbb{R}$ satisfying $0<d<a$, $2a<c$ and $0<2r<R$ such that 
	\begin{enumerate}[(a)]
		\item $f_1(x_1,x_2)\leq 2c$, for $0\leq x_1\leq c$, $0\leq x_2\leq R$;
		\item $f_1(x_1,x_2)< 2d$, for $0\leq x_1\leq d$, $0\leq x_2\leq R$;
		\item $f_1(x_1,x_2)> 4a$, for $a\leq x_1\leq 2a$, $\frac{1}{2}r\leq x_2\leq R$;
		\item $f_2(x_1,x_2)<2r$, for $0\leq x_1\leq c$, $0\leq x_2\leq r$;
		\item $f_2(x_1,x_2)>\frac{8}{3}R$, for $0\leq x_1\leq c$, $\frac{1}{2}R\leq x_2\leq R$.
	\end{enumerate}
Then, the system \eqref{5.3} has at least three solutions, at least two of them with both components nontrivial.
\end{theorem}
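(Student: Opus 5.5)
The plan is to apply Theorem~\ref{th_Kras_LW} in the expansive case $(B)$--$(ii)$. I will work in $X=Y=\mathcal{X}$ with $K_1=\mathcal{X}_+$ and $K_2=\mathcal{P}$ (the cone of Remark~\ref{re5.1}), and choose the functionals
\[
\varphi(u)=\phi(u)=\beta(u)=\|u\|_\infty,\qquad \alpha(u)=\min_{t\in[1/2,1]}u(t),
\]
with $b:=2a$, so that $d<a<b<c$ because $2a<c$. Here $\alpha$ is continuous and concave, $\varphi,\phi,\beta$ are continuous and convex, and $\alpha\le\phi$. By Remark~\ref{re5.1}, $T$ is compact and $T_2$ takes values in $\mathcal{P}$. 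Since $f_1,f_2$ are only relevant for $0\le x_1\le c$, $0\le x_2\le R$, all hypotheses concern $u_1\in\overline{\Omega}^\varphi_c$ and $u_2\in(\overline{K}_2)_{r,R}$, where $\|u_2\|_\infty\le R$. The elementary facts I will use repeatedly are:
\begin{itemize}
\item each $T_ju$ is concave and nondecreasing with $T_ju(0)=0$, so $\|T_ju\|_\infty=T_ju(1)=\int_0^1 s f_j\,ds$;
\item $\alpha(T_1u)=T_1u(1/2)\ge \tfrac12\|T_1u\|_\infty$;
\item $\int_0^1 s\,ds=\tfrac12$ and $\int_{1/2}^1 s\,ds=\tfrac38$.
\end{itemize}

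\emph{Invariance and $(A)$.} Hypothesis $(a)$ gives $\|T_1u\|_\infty\le \tfrac12\cdot 2c=c$, so $T$ maps into $\overline{\Omega}^\varphi_c\times\mathcal{P}$. Hypothesis $(b)$ gives $\phi(T_1u)<\tfrac12\cdot 2d=d$ whenever $\|u_1\|_\infty\le d$, which is $(A)$--$(b)$.

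For $(A)$--$(a)$, the constant function $\tfrac32 a$ shows the required set is nonempty. Now take $\|u_1\|_\infty\le 2a$ with $\alpha(u_1)=a$. Then $a\le u_1(s)\le 2a$ on $[1/2,1]$. Also $u_2\in\mathcal{P}$ with $\|u_2\|\ge r$ gives $u_2(s)\ge r/2$ there. Hypothesis $(c)$ then yields
\[
\alpha(T_1u)=\int_0^1\min\{\tfrac12,s\}f_1\,ds>\tfrac12\cdot\tfrac12\cdot 4a=a .
\]
The strict inequality holds because $f_1>4a$ on the whole interval $[1/2,1]$ and the integrand is continuous.

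For $(A)$--$(c)$, if $\beta(T_1u)>2a$ then $\alpha(T_1u)\ge\tfrac12\|T_1u\|_\infty>a$ directly by concavity and monotonicity.

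\emph{Condition $(B)$--$(ii)$.} If $\|u_2\|=r$, then $u_2\le r$ and $u_1\le c$, so hypothesis $(d)$ gives $\|T_2u\|=\int_0^1 sf_2\,ds<2r\cdot\tfrac12=r$. If $\|u_2\|=R$, then $u_2\ge R/2$ on $[1/2,1]$ since $u_2\in\mathcal{P}$, so hypothesis $(e)$ gives $\|T_2u\|\ge\int_{1/2}^1 sf_2\,ds>\tfrac83 R\cdot\tfrac38=R$. Note that $2r<R$ ensures $r<R$.

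\emph{Conclusion and nontriviality.} Theorem~\ref{th_Kras_LW} then yields three fixed points, lying in $\mathcal{U}_1$, $\mathcal{U}_2$ and $\mathcal{U}_3$ respectively. Every fixed point has $\|u_2\|>r>0$. The fixed point in $\mathcal{U}_1$ has $\alpha(u_1)>a>0$. The fixed point in $\mathcal{U}_3$ lies outside $\overline{\mathcal{U}}_2$, hence $\|u_1\|_\infty>d>0$. So these two solutions are coexistence solutions.

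The main point of care is the strict inequality in $(A)$--$(a)$ at the exact threshold $a$, together with keeping the $\mathcal{P}$-cone lower bound $u_2\ge\|u_2\|/2$ on $[1/2,1]$ available for the $f_1$ estimate. Both are handled above by the strict pointwise bounds on $f_1$ and $f_2$ and by the fact that $T_2$ maps into $\mathcal{P}$.
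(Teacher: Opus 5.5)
Your proposal is correct and follows the paper's proof essentially step by step. You use the same cone $\mathcal{X}_+\times\mathcal{P}$, the same functionals $\varphi=\phi=\beta=\|\cdot\|_\infty$ and $\alpha=\min_{[1/2,1]}$ with $b=2a$, and the same verifications of $(A)$ and $(B)$--$(ii)$ in Theorem~\ref{th_Kras_LW}. Your explicit identification of the two coexistence solutions as those in $\mathcal{U}_1$ (where $\alpha(u_1)>a$) and in $\mathcal{U}_3$ (where $\|u_1\|_\infty>d$) is a welcome detail that the paper leaves implicit.
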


\noindent
{\bf Proof.} Note that the norm $\|\cdot\|_\infty:\mathcal{X}_+ \to[0,\infty)$ is a convex functional. Let $\alpha:\mathcal{X}_+ \to [0,\infty)$ be the concave funtional defined by
\[\alpha(u_1)=\min_{\frac{1}{2} \leq t \leq 1}u_1(t).\]  

Condition \emph{(a)} implies $T_1(\overline{\Omega}^{\|\cdot\|_\infty}_c\times (\overline{\mathcal{P}})_{r,R})\subset\overline{\Omega}^{\|\cdot\|_\infty}_c$. Indeed, let $u=(u_1,u_2)\in \overline{\Omega}^{\|\cdot\|_\infty}_c\times (\overline{\mathcal{P}})_{r,R}$. Then $0\leq u_1(t)\leq c$ and $0\leq u_2(t)\leq R$ for $t\in[0,1]$. Consequently, $0\leq T_1 u(t)$ for all $t\in[0,1]$, and 
\[\|T_1 u\|_\infty=T_1 u (1)=\int_{0}^{1} G_1(1,s)f_1(u_1(s),u_2(s))ds\leq2c\int_{0}^{1}sds=c.\]

We proceed to the verification of the conditions concerning the first component in Theorem \ref{th_Kras_LW}. Let $u_1\in\overline{\Omega}^{\|\cdot\|_\infty}_d$ and $u_2\in(\overline{\mathcal{P}})_{r,R}$. Then $0\leq u_1(t)\leq d$ and $0\leq u_2(t)\leq R$ for $t\in[0,1]$. By condition \emph{(b)} we obtain
\[\|T_1 u\|_\infty=T_1 u (1)=\int_{0}^{1} G_1(1,s)f_1(u_1(s),u_2(s))ds<2d\int_{0}^{1}sds=d.\]
Hence, condition \emph{(A)--(b)} of Theorem \ref{th_Kras_LW} is satisfied.

It is clear that there exists $u=(u_1,u_2)\in \mathcal{X}_+ [\alpha, \|\cdot\|_\infty, a, 2a]\times (\overline{\mathcal{P}})_{r,R}$ such that $\alpha(u_1)>a$. In addition, if $u=(u_1,u_2)\in \mathcal{X}_+ [\alpha, \|\cdot\|_\infty, a, 2a]\times (\overline{\mathcal{P}})_{r,R}$ with $\alpha(u_1)=a$, then $a\leq u_1(t)\leq 2a$ and $\frac{1}{2}r\leq u_2(t)\leq R$ for $t\in[1/2,1]$. Therefore, using condition \emph{(c)} we find
\[
\begin{aligned}
\alpha(T_1u)=\min_{\frac{1}{2} \leq t \leq 1}\int_{0}^{1}G_1(t,s)f_1(u_1(s),u_2(s))ds&=\int_{0}^{1}G_1\left(\frac{1}{2},s\right)f_1(u_1(s),u_2(s))ds\\
&\geq \int_{1/2}^{1}G_1\left(\frac{1}{2},s\right)f_1(u_1(s),u_2(s))ds>4a\int_{1/2}^{1}\frac{1}{2}ds=a.
\end{aligned}
\]
Thus, condition \emph{(A)--(a)} of Theorem \ref{th_Kras_LW} is satisfied for $b=2a$.

By reasoning as in Remark \ref{re5.1}, we have $\alpha(T_1u) \ge \frac{1}{2} \|T_1u\|_\infty.$ Hence, for any 
$u=(u_1,u_2) \in \mathcal{X}_+ [\alpha, \|\cdot\|_\infty, a, c] \times (\overline{\mathcal{P}})_{r,R}$ 
with $\alpha(u_1)=a$ and $\|T_1u\|_\infty > 2a$, it follows that $\alpha(T_1u) > a$. Then, condition \emph{(A)--(c)} of Theorem \ref{th_Kras_LW} holds.

We now proceed to verify condition \emph{(B)-(ii)} of Theorem \ref{th_Kras_LW}, which concerns the second component. Let $u=(u_1,u_2) \in \overline{\Omega}^{\|\cdot\|_\infty}_c \times (\overline{\mathcal{P}})_{r,R}$. If $\|u_2\|_\infty = r$, then $0 \le u_1(t) \le c$ and $0 \le u_2(t) \le r$ for all $t \in [0,1]$.  
By condition \emph{(d)}, we have
\[
\|T_2u\|_\infty = T_2u(1) = \int_0^1 G_2(1,s) f_2(u_1(s),u_2(s)) \, ds 
= \int_0^1 s f_2(u_1(s),u_2(s)) \, ds < 2r \int_0^1 s \, ds = r = \|u_2\|_\infty.
\]

Similarly, if $\|u_2\|_\infty = R$, then $0 \le u_1(t) \le c$ and $\frac{1}{2}R \le u_2(t) \le R$ for all $t \in [1/2,1]$.  
Condition \emph{(e)} ensures
\[
\begin{aligned}
	\|T_2u\|_\infty = T_2u(1) = \int_0^1 G_2(1,s) f_2(u_1(s),u_2(s)) \, ds &\ge \int_{1/2}^1 G_2(1,s) f_2(u_1(s),u_2(s)) \, ds \\
	&= \int_{1/2}^1 s f_2(u_1(s),u_2(s)) \, ds > \frac{8}{3} R \int_{1/2}^1 s \, ds = R = \|u_2\|_\infty.
\end{aligned}
\]

As a result, Theorem \ref{th_Kras_LW} guarantees the existence of at least three fixed points for the operator $T$ in 
$\overline{\Omega}^{\|\cdot\|_\infty}_c \times (\overline{\mathcal{P}})_{r,R}$, two of which are coexistence fixed points.  
Accordingly, system \eqref{5.3} has three solutions in this set, including at least two with both components nontrivial, as desired. \qed

\begin{example}
The system 
\begin{equation}
	\label{5.5}
	\left\{
	\begin{aligned}
		& x''(t) + \frac{1}{2}+ 5\varphi(x(t))\psi(y(t)) = 0, \quad t\in[0,1], \\
		& y''(t) + e^{y^2(t)/32}+\frac{1}{10}\cos(\pi x(t))= 0, \quad t\in[0,1],\\
		& x(0) = x'(1) = 0 = y(0) = y'(1), 
	\end{aligned}
	\right.
\end{equation}
where
\begin{equation}
	\label{phi_psi}
\varphi(z) =
\begin{cases}
	0, & 0 \le z \le 1/2,\\
	2z-1, & 1/2 < z \le 1,\\
	1, & z>1,
\end{cases}
\qquad \text{and} \qquad
\psi(z) =
\begin{cases}
	z, & 0 \le z \le 1,\\
	1, & z>1,
\end{cases}
\end{equation}
satisfies the assumptions of Theorem \ref{th5.1} with the constants $a=1$, $c=5$, $d=1/2$, $r=2$, and $R=5$. 
Moreover, it is readily seen that, for any solution \((x,y)\) of \eqref{5.5}, the component \(x\) cannot be identically zero (i.e, the function $x(t)$ cannot vanish for all $t\in[0,1]$). 
Consequently, system \eqref{5.5} admits at least three solutions, each with both components nontrivial.
\end{example}

Conditions on the nonlinearities that ensure the existence of at least nine solutions for the system \eqref{5.3} are detailed in the following result, which is derived from the application of Theorem \ref{th2}.
\begin{theorem}
		\label{th5.2}
		Assume that there exists constants $a_j,c_j,d_j\in\mathbb{R}$ ($j=1,2$) satisfying $0<d_j<a_j$ and $2a_j\le c_j$ such that the following conditions hold for each $j\in\{1,2\}$ 
		\begin{enumerate}[(a)]
			\item $ f_j(x_1,x_2)\leq 2c_j$, for $0\leq x_1\leq c_1$, $0\leq x_2\leq c_2$;
			\item $f_j(x_1,x_2)< 2d_j$, for $0\leq x_j\leq d_j$, $0\leq x_i\leq c_i$, $(i\neq j)$;
			\item $f_j(x_1,x_2)> 4a_j$, for $a_j\leq x_j\leq 2a_j$, $0\leq x_i\leq c_i$, $(i\neq j)$;
		\end{enumerate}
		Then, the system \eqref{5.3} has at least nine solutions, at least four of them with both components nontrivial.
\end{theorem}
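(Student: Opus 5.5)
The plan is to apply Theorem~\ref{th2} to the operator $T=(T_1,T_2)$ from \eqref{5.4}, working in the cone $\mathcal{X}_+\times\mathcal{X}_+$. In each component $j\in\{1,2\}$ we take $\varphi_j=\phi_j=\|\cdot\|_\infty$, $\beta_j=\|\cdot\|_\infty$, and $\alpha_j(u_j)=\min_{1/2\le t\le 1}u_j(t)$, which is a continuous concave functional satisfying $\alpha_j\le\phi_j$. The constants are $a_j,c_j,d_j$ as given, with $b_j=2a_j\le c_j$. Since $G_j(t,s)=\min\{t,s\}$, we have $\|T_ju\|_\infty=T_ju(1)=\int_0^1 s f_j(u_1(s),u_2(s))\,ds$ and $\int_0^1 s\,ds=1/2$. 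Consequently hypothesis (a) gives $\|T_ju\|_\infty\le c_j$ whenever $\|u_1\|_\infty\le c_1$ and $\|u_2\|_\infty\le c_2$. This yields the invariance $T(\overline{\Omega}_{c_1}\times\overline{\Omega}_{c_2})\subset \overline{\Omega}_{c_1}\times\overline{\Omega}_{c_2}$, and complete continuity comes from Remark~\ref{re5.1}.

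Next I verify conditions (a)--(c) of Theorem~\ref{th2} one at a time.

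\emph{Condition (b).} If $\|u_j\|_\infty\le d_j$ and $\|u_i\|_\infty\le c_i$, then hypothesis (b) gives $\|T_ju\|_\infty<2d_j\cdot\frac12=d_j$.

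\emph{Condition (a).} The constant function $u_j\equiv\frac32 a_j$ shows the required set is nonempty. Now take $u_j$ with $\alpha_j(u_j)=a_j$ and $\|u_j\|_\infty\le 2a_j$. Then $a_j\le u_j(t)\le 2a_j$ on $[1/2,1]$, so hypothesis (c) applies on that interval, with no restriction needed on the other component. As in the proof of Theorem~\ref{th5.1},
\[
\alpha_j(T_ju)=\int_0^1 G_j\bigl(\tfrac12,s\bigr)f_j\,ds\ge\int_{1/2}^1\tfrac12 f_j\,ds>4a_j\cdot\tfrac14=a_j .
\]

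\emph{Condition (c).} The computation in Remark~\ref{re5.1} uses only $f_j\ge0$, so it holds for either component and gives $\alpha_j(T_ju)\ge\frac12\|T_ju\|_\infty$. Hence $\|T_ju\|_\infty>b_j=2a_j$ implies $\alpha_j(T_ju)>a_j$.

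With all hypotheses checked, Theorem~\ref{th2} gives nine fixed points of $T$, which are nine solutions of \eqref{5.3}. Four of them lie in $\mathcal U_1,\mathcal U_5,\mathcal U_6,\mathcal U_9$, where both components satisfy $\|u_j\|_\infty>d_j>0$ (or $\alpha_j(u_j)>a_j$). These components are therefore nontrivial, which gives the coexistence claim.

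The main point needing care is the localization. On the set where hypothesis (c) is invoked, the first component $u_j$ is controlled only on $[1/2,1]$, while $u_i$ merely satisfies $0\le u_i\le c_i$. Hypothesis (c) is stated for exactly that range of $x_i$, so it fits. I also have to confirm that $b_j=2a_j\le c_j$, which is where the assumption $2a_j\le c_j$ is used.
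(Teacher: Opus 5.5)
Your proposal is correct and follows essentially the same route as the paper: you apply Theorem~\ref{th2} with $\varphi_j=\phi_j=\beta_j=\|\cdot\|_\infty$, $\alpha_j(u)=\min_{1/2\le t\le 1}u(t)$ and $b_j=2a_j$, and you check invariance and condition (b) via $\int_0^1 s\,ds=\tfrac12$, condition (a) via the minimum of $T_ju$ on $[1/2,1]$ being attained at $t=\tfrac12$, and condition (c) via $\alpha(T_ju)\ge\tfrac12\|T_ju\|_\infty$. You read off the coexistence claim from $\mathcal{U}_1,\mathcal{U}_5,\mathcal{U}_6,\mathcal{U}_9$, as in the remark after Theorem~\ref{th2}, and your witness $u_j\equiv\tfrac32 a_j$ merely makes explicit the nonemptiness that the paper calls clear.
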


\noindent
{\bf Proof.} As in the proof of Theorem~\ref{th5.1}, we use $\|\cdot\|_\infty$ and $\alpha$
as the convex and concave functionals, respectively. For $j=1,2$, we set $\overline{\Omega}_{c_j}^{\|\cdot\|_\infty}
=\{u\in\mathcal{X}_+:\ \|u\|_\infty\le c_j\}$.

Fix $j\in\{1,2\}$. By condition~\emph{(a)}, $T_j\bigl(\overline{\Omega}_{c_1}^{\|\cdot\|_\infty}
\times \overline{\Omega}_{c_2}^{\|\cdot\|_\infty}\bigr)
\subset \overline{\Omega}_{c_j}^{\|\cdot\|_\infty}$. Indeed, for $u=(u_1,u_2)\in \overline{\Omega}_{c_1}^{\|\cdot\|_\infty}
\times \overline{\Omega}_{c_2}^{\|\cdot\|_\infty}$ we have
$0\le u_i(t)\le c_i$ for $t\in[0,1]$, $i=1,2$, and hence
$0\le T_j u(t)$ for all $t\in[0,1]$. Moreover,
\[
\|T_j u\|_\infty
= T_j u(1)
= \int_0^1 G_j(1,s) f_j(u_1(s),u_2(s))\,ds
\le 2c_j \int_0^1 s\,ds
= c_j.
\]

Next, let $u_j\in \overline{\Omega}^{\|\cdot\|_\infty}_{d_j}$ and
$u_i\in \overline{\Omega}^{\|\cdot\|_\infty}_{c_i}$, with $i\neq j$.
Then $0\le u_j(t)\le d_j$ and $0\le u_i(t)\le c_i$ for all $t\in[0,1]$.
Condition~\emph{(b)} yields
\[
\|T_j u\|_\infty
= T_j u(1)
= \int_0^1 G_j(1,s) f_j(u_1(s),u_2(s))\,ds
< 2d_j \int_0^1 s\,ds
= d_j,
\]
so that condition~\emph{(A)--(b)} of Theorem~\ref{th2} holds for $j$.

It is clear that there exists
$u_j\in \mathcal{X}_+[\alpha,\|\cdot\|_\infty,a_j,2a_j]$
such that $\alpha(u_j)>a_j$. Moreover, let
$u_j\in \mathcal{X}_+[\alpha,\|\cdot\|_\infty,a_j,2a_j]$ and
$u_i\in \overline{\Omega}^{\|\cdot\|_\infty}_{c_i}$, $i\neq j$, with
$\alpha(u_j)=a_j$. Then
$a_j\le u_j(t)\le 2a_j$ and $0\le u_i(t)\le c_i$ for
$t\in[1/2,1]$. By condition~\emph{(c)}, we obtain
\[
\begin{aligned}
	\alpha(T_j u)
	= \min_{1/2\le t\le 1}
	\int_0^1 G_j(t,s) f_j(u_1(s),u_2(s))\,ds& = \int_0^1 G_j\!\left(\tfrac12,s\right)
	f_j(u_1(s),u_2(s))\,ds \\
	&\ge \int_{1/2}^1 G_j\!\left(\tfrac12,s\right)
	f_j(u_1(s),u_2(s))\,ds
	> 4a_j \int_{1/2}^1 \tfrac12\,ds
	= a_j.
\end{aligned}
\]
Thus, condition~\emph{(A)--(a)} of Theorem~\ref{th2} is satisfied for $j$
by choosing $b_j=2a_j$.

Finally, since $\alpha(T_j u)\ge \tfrac12\|T_j u\|_\infty$, for
$u_j\in \mathcal{X}_+[\alpha,\|\cdot\|_\infty,a_j,c_j]$ and
$u_i\in \overline{\Omega}^{\|\cdot\|_\infty}_{c_i}$, $i\neq j$, with
$\alpha(u_j)=a_j$ and $\|T_j u\|_\infty>2a_j$, we infer that
$\alpha(T_j u)>a_j$. Hence, condition~\emph{(A)--(c)} of
Theorem~\ref{th2} also holds for $j$.

Therefore, the operator $T$ satisfies all the assumptions of
Theorem~\ref{th2}, which ensures the existence of at least nine solutions
in $\overline{\Omega}_{c_1}^{\|\cdot\|_\infty}
\times \overline{\Omega}_{c_2}^{\|\cdot\|_\infty}$, four of them being
coexistence fixed points. Consequently, system~\eqref{5.3} admits at
least nine solutions, including at least four with both components
nontrivial. \qed

\begin{example}
		\label{symmetric_systems}
Consider the symmetric system of the form
\begin{equation}
	\label{symmetric}
	\left\{
	\begin{aligned}
		& x''(t) + f(x(t), y(t)) = 0, \quad t\in[0,1], \\
		& y''(t) + f(y(t), x(t)) = 0, \quad t\in[0,1],\\
		& x(0) = x'(1) = 0 = y(0) = y'(1), 
	\end{aligned}
	\right.
\end{equation}
where $f:[0,\infty)^2\to [0,\infty)$ is a continuous function. Theorem \ref{th5.2} ensures that the system \eqref{symmetric} admits nine distinct solutions, four of which have both components nontrivial, provided that there exist $a,c,d\in\mathbb{R}$ such that $0<d<a$ and $2a<c$, and the following conditions hold:
\begin{enumerate}[(a)]
	\item $ f(x,y)\leq 2c$, for $0\leq x\leq c$, $0\leq y\leq c$;
	\item $f(x,y)< 2d$, for $0\leq x\leq d$, $0\leq y\leq c$;
	\item $f(x,y)> 4a$, for $a\leq x\leq 2a$, $0\leq y\leq c$.
\end{enumerate}

Every function of the form
\begin{equation}
	\label{example}
	f(x,y)=Lg(x)h(y),
\end{equation}
where $h:[0,\infty)\to (0,\infty)$ is non-increasing and $g:[0,\infty)\to [0,\infty)$ is non-decreasing and fulfills
\[
\lim_{x\to 0^+}\frac{g(x)}{x}=0
\quad \text{and} \quad
\lim_{x\to +\infty} \frac{g(x)}{x}=0,
\]
satisfies the above conditions provided that $L$ is sufficiently large.

Indeed, since $g$ is non-decreasing and $\lim_{x\to +\infty} g(x)/x=0$, there exists $c>0$ large enough such that
\[\dfrac{g(c)}{c}\leq \dfrac{2}{L h(0)} \]
and thus
\[g(x)\leq g(c)\leq \dfrac{2}{L h(0)}c \quad \text{for } 0\leq x\leq c. \]
By the monotonicity of $h$, it follows that
\[L g(x)h(y)\leq \dfrac{2}{h(0)}h(y) c\leq 2c \quad \text{for } 0\leq x\leq c, \, 0\leq y, \]
which yields condition (a).

Now, suppose that the inequality
\begin{equation}
	\label{L_cond}
	L>\frac{4a}{g(a) h(c)}
\end{equation}
holds for some $a\in(0,c/2)$. Then $L g(a)h(c)>4a$ and since $g$ is non-decreasing, we have
\[
L g(x) h(c)>4a, \quad \text{for } a\leq x\leq 2a.
\]
Moreover, since $h(y)\geq h(c)$ for all $0\leq y\leq c$, it follows that
\[
L g(x) h(y)>4a, \quad \text{for } a\leq x\leq 2a, \, 0\leq y\leq c,
\]
which implies condition (c).

On the other hand, since
\[
\lim_{x\to 0^+} \frac{g(x)}{x}=0,
\]
there exists $d\in(0,a)$ such that
\[
L g(x)<\frac{2}{h(0)}x, \quad \text{for } 0\leq x\leq d.
\]
Using that $h$ is non-increasing, we obtain
\[
Lg(x)h(y)<2d, \quad \text{for } 0\leq x\leq d, \, 0\leq y,
\]
and this implies condition (b).

Typical examples of functions $g$ satisfying the above conditions include 
\[g(x)=\dfrac{x^2}{1+x^2}, \quad g(x)=\ln(1+x^2), \quad g(x)=\min\{x^p,x^q \} \quad \text{ and } \quad g(x)=1-e^{-x^p}\]
with $p>1$ and $0<q<1$.
Similarly, functions $h$ of the form $h(y)=(1+y)^{-k}$ or $ h(y)=e^{-k y}$ with $k>0$ are admissible choices in \eqref{example}.
\end{example}

In some cases, it is possible to argue that the nine solutions provided by Theorem \ref{th5.2} have both components nontrivial. This is illustrated by the following example.

\begin{example}
			Consider the following symmetric system
		\begin{equation}
			\label{5.61}
			\left\{
			\begin{aligned}
				& x''(t) + \frac{9}{2} + 5\varphi(x(t))\psi(y(t))-4\phi(x(t)) = 0, \quad t \in [0, 1], \\
				& y''(t) + \frac{9}{2} + 5\varphi(y(t))\psi(x(t))-4\phi(y(t)) = 0, \quad t \in [0, 1], \\
				& x(0) = x'(1) = 0, \quad y(0) = y'(1) = 0, 
			\end{aligned}
			\right.
		\end{equation}
		where the nonlinearities $\varphi$ and $\psi$ are defined as in \eqref{phi_psi} and
		\[\phi(z) =
		\begin{cases}
			1, & 0 \le z \le 1/2,\\
			2-2z, & 1/2 < z \le 1,\\
			0, & z>1.
		\end{cases}\]
		
		 By choosing the constants $a_j=1$, $c_j=5$, and $d_j=1/2$ for $j=1,2$, it can be verified that the conditions (a)--(c) of Theorem~\ref{th5.2} are satisfied. Consequently, the system \eqref{5.61} possesses at least nine distinct solutions. Furthermore, due to the presence of the positive constant term $9/2$ in the differential equations, it is evident that every component of any solution is strictly positive. Therefore, all nine solutions have both nontrivial components.
\end{example}

Next, we turn our attention to the following system of differential equations
\begin{equation}
	\label{5.6}
	\left\{
	\begin{aligned}
		& \beta_1 u_1''(t) - u_1'(t) + f_1(u_1(t), u_2(t)) = 0, \quad t\in[0,1], \\
		& \beta_2 u_2''(t) - u_2'(t) + f_2(u_1(t), u_2(t)) = 0, \quad t\in[0,1],\\
		& \beta_1 u_1'(0) - u_1(0) = 0, \quad u_1'(1) = 0, \\
		& \beta_2 u_2'(0) - u_2(0) = 0, \quad u_2'(1) = 0,
	\end{aligned}
	\right.
\end{equation}
where $\beta_1,\beta_2>0$ and $f_1,f_2:[0,\infty)^2\to\mathbb{R}$ are continuous functions.

Systems of the form \eqref{5.6} arise naturally in reaction-convection-diffusion models, with applications to chemical reactor dynamics and biological mass transport, among others \cite{Leggett-Williams, Deim, varma}.

The solutions of system \eqref{5.6} are in one-to-one correspondence with the fixed points of an operator of the form given in \eqref{5.4}. For each $j \in \{1,2\}$, the associated Green's functions in this case (see, e.g. \cite{infante2, Leggett-Williams}) are given by
\begin{equation}
	\label{green2}
	G_j(t,s)=
	\begin{cases}
		e^{\frac{t-s}{\beta_j}}, & 0 \le t \le s \le 1,\\[4pt]
		1, & 0 \le s \le t \le 1.
	\end{cases}
\end{equation}
Hereafter, we shall again denote this operator by $T$.

Note that, as before, for each $j \in \{1,2\}$ and each fixed $s \in [0,1]$, the mapping $t \mapsto G_j(t,s)$, with $G_j$ defined by \eqref{green2}, is nondecreasing on $[0,1]$.

In the following theorem, we establish sufficient conditions for the existence of at least four distinct solutions for system \eqref{5.6} by applying Corollary \ref{coro4.1} to the operator $T$. 

\begin{theorem}
	\label{th5.3}
Assume that there exist constants $a_j, c_j, d_j \in \mathbb{R}$, with $0 < d_j < a_j$ and 
$a_j e^{1/\beta_j} < c_j$ ($j = 1,2$), such that the following conditions hold for each $j \in \{1,2\}$:
\begin{enumerate}[(a)]
	\item $0\leq f_j(x_1,x_2)$, for $0\leq x_1\leq c_1$, $0\leq x_2\leq c_2$;
	\item  $f_j(x_1,x_2)<d_j$, for $0\leq x_j\leq d_j$, $0\leq x_i\leq c_i$ ($j\neq i$);
	\item $f_j(x_1,x_2)>a_j(\beta_j-\beta_je^{-1/\beta_j})^{-1}$, for $a_j\leq x_j\leq c_j$, $0\leq x_i\leq c_i$ ($j\neq i$).
\end{enumerate}
	Then, the system \eqref{5.5} has at least four solutions, at least one of them with both components nontrivial.
\end{theorem}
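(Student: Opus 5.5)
We apply Corollary~\ref{coro4.1} to the operator $T$ given by \eqref{5.4} with the Green's functions \eqref{green2}. We work on the cone $\mathcal{X}_+$ in each component, with $\varphi_j=\phi_j=\beta_j=\|\cdot\|_\infty$ and $\alpha_j=\alpha$ for $j=1,2$, where
\[
\alpha(u)=\min_{0\le t\le 1}u(t)=u(0),
\]
the last equality holding for nondecreasing $u$. Then $\alpha\le\|\cdot\|_\infty$. Outside the sign-preserving region we replace $f_j$ by its truncation $f_j(\min\{x_1,c_1\},\min\{x_2,c_2\})$; this does not change $T$ on $\overline{\Omega}_{c_1}\times\overline{\Omega}_{c_2}$. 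By (a), $T$ maps $\overline{\Omega}^{\|\cdot\|_\infty}_{c_1}\times\overline{\Omega}^{\|\cdot\|_\infty}_{c_2}$ into $\mathcal{X}_+\times\mathcal{X}_+$, and it is compact by continuity.

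Since $t\mapsto G_j(t,s)$ is nondecreasing, each $T_ju$ is nondecreasing. Hence $\|T_ju\|_\infty=T_ju(1)=\int_0^1 f_j\,ds$, while
\[
\alpha(T_ju)=T_ju(0)=\int_0^1 e^{-s/\beta_j}f_j\,ds\ \ge\ e^{-1/\beta_j}\|T_ju\|_\infty .
\]
\emph{Condition (b) of Corollary~\ref{coro4.1}.} If $\|u_j\|_\infty\le d_j$ and $\|u_i\|\le c_i$, then hypothesis (b) gives $\|T_ju\|_\infty=\int_0^1 f_j<d_j$.

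\emph{Condition (a).} The set $\{u_j: a_j\le\alpha(u_j),\ \|u_j\|\le c_j\}$ contains constants larger than $a_j$ because $a_j<c_j$, so it is nonempty. If $\alpha(u_j)=a_j$ and $\|u_j\|\le c_j$, then $a_j\le u_j(s)\le c_j$ for all $s$. Hypothesis (c) then yields
\[
\alpha(T_ju)>a_j(\beta_j-\beta_je^{-1/\beta_j})^{-1}\int_0^1e^{-s/\beta_j}ds=a_j,
\]
since $\int_0^1 e^{-s/\beta_j}ds=\beta_j(1-e^{-1/\beta_j})$.

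\emph{Condition (c).} If $\|T_ju\|_\infty>c_j$, then
\[
\alpha(T_ju)\ge e^{-1/\beta_j}\|T_ju\|_\infty>\frac{a_j}{c_j}\|T_ju\|_\infty,
\]
because $a_je^{1/\beta_j}<c_j$. This is exactly what is required.

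The main subtlety is that here $\alpha$ is the minimum over all of $[0,1]$, which need not equal $u(0)$ for a general $u_j$ in the cone $\mathcal{X}_+$. This is harmless in condition (a): $\alpha(u_j)=a_j$ still forces $u_j\ge a_j$ everywhere, and the upper bound $c_j$ comes from the norm. We only use $\alpha(T_ju)=T_ju(0)$, and that is valid because $T_ju$ is nondecreasing. Corollary~\ref{coro4.1} then yields four fixed points, each a solution of \eqref{5.6}.

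For the coexistence claim, we track where the proof of Corollary~\ref{coro4.1} locates the points. One of them comes from $\mathcal{U}_1=K(\alpha,\varphi,a,c)$, which is transferred to $T$ through the argument of Theorem~\ref{th2}. To be safe we instead take the point in $\mathcal{U}_9$, which satisfies $\phi_j(u_j)>d_j$ for both $j$, so both components are nontrivial. Equivalently, one can observe that all four retained points avoid the sets where $u_j\equiv0$ in at least the $\mathcal{U}_9$ case. The step I expect to need most care is checking that the retained fixed points of $N$ in the corollary include one whose components are both nonzero; I would verify this via the set $\mathcal{U}_9$.
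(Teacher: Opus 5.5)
Your proposal is correct and is essentially the paper's proof. It uses the same functionals ($\|\cdot\|_\infty$ and $\min_{[0,1]}$), the same monotonicity of $t\mapsto G_j(t,s)$ to get $\alpha_j(T_ju)=T_ju(0)\ge e^{-1/\beta_j}\|T_ju\|_\infty$, and the same verification of (a)--(c) of Corollary~\ref{coro4.1}. Your bound for (c) in fact holds at every $u$ in the domain, not only when $\alpha_j(u_j)=a_j$, and this global form is what the proof of that corollary actually invokes at the retained fixed points, where $\alpha_j(\bar u_j)<a_j$.

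Your localization of the coexistence point in $\mathcal{U}_9$ is right and makes explicit what the paper leaves implicit, since the four fixed points of $N$ retained in Corollary~\ref{coro4.1} lie in $\mathcal{U}_4,\mathcal{U}_7,\mathcal{U}_8,\mathcal{U}_9$. You should drop two side remarks. The fixed point of $N$ in $\mathcal{U}_1$ need not be a fixed point of $T$, so it does not transfer. And it is not true that all four retained points avoid $u_j\equiv0$: those in $\mathcal{U}_4,\mathcal{U}_7,\mathcal{U}_8$ may have a trivial component.
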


\noindent
{\bf Proof.} Again, we will be working with the norm $\|\cdot\|_\infty:\mathcal{X}_+ \to[0,\infty)$ as a convex functional. For each $j\in\{1,2\}$ we introduce $\alpha_j:\mathcal{X}_+ \to [0,\infty)$ the concave funtional defined by
\[\alpha_j(u_j)=\min_{0 \leq t \leq 1}u_j(t).\]

Observe that, from condition \emph{(a)}, one has $T_j\bigl(\overline{\Omega}^{\|\cdot\|_\infty}_{c_1}\times
\overline{\Omega}^{\|\cdot\|_\infty}_{c_2}\bigr)
\subset \mathcal{X}_+$  for both \(j\in\{1,2\}\), where $\overline{\Omega}^{\|\cdot\|_\infty}_{c_j} :=\{u_j\in\mathcal{X}_+ : \|u_j\|_\infty \le c_j\}$. 

Fix \(j\in\{1,2\}\). Let \(u_j\in \overline{\Omega}^{\|\cdot\|_\infty}_{d_j}\) and
\(u_i\in \overline{\Omega}^{\|\cdot\|_\infty}_{c_i}\), with \(i\neq j\).
Then $0 \le u_j(t) \le d_j$ and $0 \le u_i(t) \le c_i$ for all $t\in[0,1]$. Hence, by condition \emph{(b)} for $j$, it follows that
\[
\|T_j u\|_\infty
= T_j u(1)
= \int_0^1 G_j(1,s)\, f_j\bigl(u_1(s),u_2(s)\bigr)\, ds
< d_j \int_0^1 G_j(1,s)\, ds
= d_j.
\]
Hence, condition \emph{(b)} of Corollary \ref{coro4.1} is satisfied for both $j=1,2$.

Let $u_j\in\mathcal{X}_+[\alpha_j,\|\cdot\|_\infty,a_j,c_j]$ with $\alpha_j(u_j)=a_j$ and $u_i\in\overline{\Omega}^{\|\cdot\|_\infty}_{c_i}$, $i\neq j$. Then $a_j\leq u_j(t)\leq c_j$ and $0\leq u_i(t)\leq c_i$ for $t\in[0,1]$. Therefore, using condition \emph{(c)} for $j$ we find
\[
\begin{aligned}
\alpha_j(T_j u)=T_j u(0)&=\int_{0}^{1} G_j(0,s)f_j(u_1(s),u_2(s))ds\\
&=\int_{0}^{1}e^{-\frac{s}{\beta_j}}f_j(u_1(s),u_2(s))ds>a_j(\beta_j-\beta_je^{-\frac{1}{\beta_j}})^{-1}\int_{0}^{1}e^{-\frac{s}{\beta_j}}ds=a_j.
\end{aligned}
\]
Thus, condition \emph{(a)} of Corollary \ref{coro4.1} holds for both $j=1,2$.

Finally, assume that $u_j\in\mathcal{X}_+[\alpha_j,\|\cdot\|_\infty,a_j,c_j]$ with $\alpha_j(u_j)=a_j$ and $u_i\in\overline{\Omega}^{\|\cdot\|_\infty}_{c_i}$, $i\neq j$, such that $\|T_j u\|_\infty>c_j$. We have
\begin{equation}
	\label{cond_ig}
\begin{aligned}
	\alpha_j(T_j u)=T_j u(0)&=\int_{0}^{1}e^{-\frac{s}{\beta_j}}f_j(u_1(s),u_2(s))ds\\
	&\ge e^{-\frac{1}{\beta_j}}\int_{0}^{1} f_j(u_1(s),u_2(s))ds=e^{-\frac{1}{\beta_j}} T_j u (1)=e^{-\frac{1}{\beta_j}}\|T_j u\|_\infty>\frac{a_j}{c_j} \|T_j u\|_\infty.
\end{aligned}
\end{equation}
 Hence, condition \emph{(c)} of Corollary~\ref{coro4.1} is satisfied for both
 \(j\in\{1,2\}\). 
 
Therefore, Corollary~\ref{coro4.1} guarantees the existence of at least four fixed points of the operator \(T\) in $\overline{\Omega}^{\|\cdot\|_\infty}_{c_1}\times
\overline{\Omega}^{\|\cdot\|_\infty}_{c_2},$ at least one of which is a coexistence fixed point. Consequently, system~\eqref{5.6} admits at least four solutions, one of them having both components nontrivial. \qed

\begin{remark}
		\label{re5.2}
	Note that, whenever the nonlinearities are strictly positive on $[0,c_1] \times [0,c_2]$, we can choose $a_j e^{1/\beta_j} = c_j$ for $j=1,2$ in Theorem \ref{th5.3}, since in \eqref{cond_ig} the first inequality is then strict and strictness is no longer required in the last inequality.
\end{remark}

We now apply Theorem \ref{th5.3} to the following system with parameters
\begin{equation}
	\label{5.9}
	\left\{
	\begin{aligned}
		& \beta_1 u_1''(t) - u_1'(t)
		+ p_1 \bigl(q_1 - u_2(t)\bigr)\, \exp\!\left(\frac{-k_1}{1+u_1(t)}\right) = 0,
		\quad t\in[0,1], \\
		& \beta_2 u_2''(t) - u_2'(t)
		+ p_2 \bigl(q_2 - u_1(t)\bigr)\, \exp\!\left(\frac{-k_2}{1+u_2(t)}\right) = 0,
		\quad t\in[0,1], \\
		& \beta_1 u_1'(0) - u_1(0) = 0,
		\qquad u_1'(1) = 0, \\
		& \beta_2 u_2'(0) - u_2(0) = 0,
		\qquad u_2'(1) = 0,
	\end{aligned}
	\right.
\end{equation}
where \(\beta_1,\beta_2,p_1,p_2,q_1,q_2,k_1,k_2>0\). This system is of the form \eqref{5.6}, where the nonlinearities are given by
\begin{equation*}
	f_1(u_1,u_2) = p_1 (q_1 - u_2)\,  \exp\!\left(\frac{-k_1}{1+u_1}\right)
	\quad \text{ and } \quad
	f_2(u_1,u_2) = p_2 (q_2 - u_1)\,  \exp\!\left(\frac{-k_2}{1+u_2}\right).
\end{equation*}

For $k\in\mathbb{R}_+$, we define the function $g_k:(0,\infty)\to \mathbb{R}$ given by
\begin{equation}
	g_k(z)=\frac{1}{z}\exp\left(\frac{-k}{1+z}\right),
\end{equation}
together with the following numbers
\[s(k)=\frac{(k-2)-\sqrt{k(k-4)}}{2} \quad \text{ and } \quad \tilde{s}(k)=\frac{(k-2)+\sqrt{k(k-4)}}{2}. \]
Understanding the increasing and decreasing behavior of \(g\) is essential for the subsequent analysis. In this regard, we obtain the following lemma.
\begin{lemma}
	\label{le5.1}
	For $k\in\mathbb{R}_+$, let \(g_k:(0,\infty)\to\mathbb{R}\) be defined as above. Then $g_k$ is nonincreasing on $(0,\infty)$ if $k\le 4$. Otherwise, if \(k>4\), \(g_k\) is decreasing on \((0,s(k))\cup(\tilde{s}(k),\infty)\), increasing on \((s(k),\tilde{s}(k))\), attaining a relative minimum and maximun at \(s(k)\) and \(\tilde{s}(k)\), respectively.
\end{lemma}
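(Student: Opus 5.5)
The plan is to differentiate $g_k$ directly and study the sign of $g_k'$ on $(0,\infty)$. Writing $g_k(z)=z^{-1}e^{-k/(1+z)}$, the product rule gives
\[
g_k'(z)=e^{-k/(1+z)}\left(-\frac{1}{z^2}+\frac{k}{z(1+z)^2}\right)
=\frac{e^{-k/(1+z)}}{z^2(1+z)^2}\bigl(kz-(1+z)^2\bigr).
\]
The exponential and the denominator are strictly positive, so the sign of $g_k'(z)$ is the sign of the quadratic $q(z):=kz-(1+z)^2=-\bigl(z^2+(2-k)z+1\bigr)$. The whole lemma therefore reduces to locating the positive roots of $p(z):=z^2-(k-2)z+1$.

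The discriminant of $p$ is $(k-2)^2-4=k(k-4)$.

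\textbf{Case $k<4$.} Here $p$ has no real roots, so $p>0$ and $g_k'<0$ everywhere.

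\textbf{Case $k=4$.} Here $p(z)=(z-1)^2\ge 0$, so $g_k'\le 0$ and vanishes only at $z=1$. In both cases $g_k$ is nonincreasing on $(0,\infty)$; in fact it is strictly decreasing, since $g_k'$ vanishes at most at an isolated point.

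\textbf{Case $k>4$.} The roots of $p$ are exactly $s(k)$ and $\tilde s(k)$, with $s(k)<\tilde s(k)$. I then check that both lie in $(0,\infty)$. Their product is $1>0$ and their sum is $k-2>0$, so both are positive; alternatively, $(k-2)^2>k(k-4)$ gives $s(k)>0$ directly.

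Since $p$ is an upward parabola, $p>0$ on $(0,s(k))\cup(\tilde s(k),\infty)$ and $p<0$ on $(s(k),\tilde s(k))$. Hence $g_k'<0$ on the outer intervals and $g_k'>0$ on the middle one. This gives the stated monotonicity. The first-derivative test then gives a relative minimum at $s(k)$ (decreasing to increasing) and a relative maximum at $\tilde s(k)$ (increasing to decreasing).

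There is no real obstacle here. The only points needing care are the algebraic simplification of $g_k'$ into the factored form above, and verifying that both roots are positive so that the three intervals genuinely lie in $(0,\infty)$.
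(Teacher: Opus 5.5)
Your proposal is correct: the factorization $g_k'(z)=-\frac{e^{-k/(1+z)}}{z^2(1+z)^2}\bigl(z^2-(k-2)z+1\bigr)$, the discriminant $k(k-4)$, the positivity of both roots (product $1$, sum $k-2$) and the resulting sign pattern give exactly the stated monotonicity and extrema. The paper states this lemma without a written proof, illustrating it only with Figure~\ref{fig3}. Your derivative sign analysis is the computation the statement rests on, since $s(k)$ and $\tilde s(k)$ are defined as precisely these two roots.
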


\begin{figure}[h]
	\centering
		\hspace{-1.1cm}\includegraphics[scale=0.8]{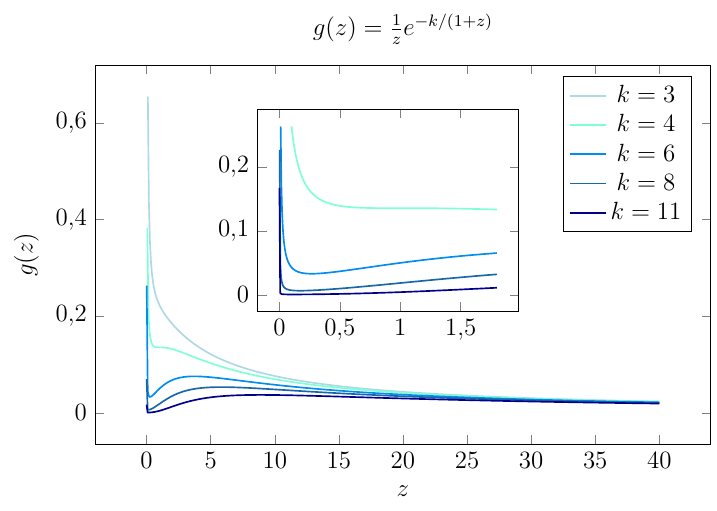}
		\vspace{-0.5cm}
	\caption{Graphical illustration of the behavior of \(g_k\) for selected values of \(k\): \(k=3,4,6,8,11\).}
	\label{fig3}
\end{figure}

\begin{proposition}
	\label{prop5.1}
For system~\eqref{5.9}, let \(\beta_1, \beta_2 > 0\), and assume that
\begin{equation}
	\label{5.11}
	\exp\bigl(-\sqrt{k_1(k_1-4)}\,\bigr)
	< \frac{s(k_1)}{\tilde{s}(k_1)}\,\frac{k_2-1}{k_2}, 
	\qquad
	\exp\bigl(-\sqrt{k_2(k_2-4)}\,\bigr)
	< \frac{s(k_2)}{\tilde{s}(k_2)}\,\frac{k_1-1}{k_1},
\end{equation}
together with $q_1 = r_2 \, \tilde{s}(k_2)\, e^{1/\beta_2},$ and $q_2 = r_1 \, \tilde{s}(k_1)\, e^{1/\beta_1},$ where \(r_1 \ge k_1\) and \(r_2 \ge k_2\). 

Then, there exist \(m_1,m_2\in\mathbb{R}_+\) such that, setting $p_1 = m_2 \, e^{-1/\beta_2}$ and $p_2 = m_1 \, e^{-1/\beta_1},$ the following inequalities hold:
\begin{equation}
	\label{5.12}
\frac{f_1(s(k_1),0)}{s(k_1)} < 1, \qquad
\frac{f_1(\tilde{s}(k_1),\tilde{s}(k_2)e^{1/\beta_2})}{\tilde{s}(k_1)} > 1,
\end{equation}
\begin{equation}
	\label{5.13}
	\frac{f_2(0,s(k_2))}{s(k_2)} < 1, \qquad
	\frac{f_2(\tilde{s}(k_1)e^{1/\beta_1}, \tilde{s}(k_2))}{\tilde{s}(k_2)} > 1.
\end{equation}

Moreover, the inequalities \eqref{5.12} and \eqref{5.13} are satisfied for any pair $(m_1, m_2) \in \mathbb{R}_+^2$ fulfilling
\begin{equation}
	\label{5.14}
	\frac{\tilde{s}(k_2)}{(r_1 - 1) \tilde{s}(k_1)} \exp \left( \frac{k_2}{1 + \tilde{s}(k_2)} \right) < m_1 < \frac{s(k_2)}{r_1 \tilde{s}(k_1)} \exp \left( \frac{k_2}{1 + s(k_2)} \right),
\end{equation}
\begin{equation}
	\label{5.15}
		\frac{\tilde{s}(k_1)}{(r_2 - 1) \tilde{s}(k_2)} \exp \left( \frac{k_1}{1 + \tilde{s}(k_1)} \right) < m_2 < \frac{s(k_1)}{r_2 \tilde{s}(k_2)} \exp \left( \frac{k_1}{1 + s(k_1)} \right).
\end{equation}
\end{proposition}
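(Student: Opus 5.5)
The plan is to substitute the explicit nonlinearities and the prescribed values of $p_j,q_j$ into \eqref{5.12}--\eqref{5.13}. Each of the four inequalities should then become a one-sided bound on $m_1$ or $m_2$. After that, \eqref{5.11} should be exactly what makes the resulting intervals \eqref{5.14}--\eqref{5.15} nonempty. Throughout I assume $k_1,k_2>4$, which is implicit in the statement since $s(k_j),\tilde s(k_j)$ must be real and positive. Then $r_j\ge k_j>4>1$, so every factor $r_j-1$ below is positive.

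\emph{Step 1 (reduction to bounds on $m_2$).} Write $f_1(x_1,x_2)/x_1 = p_1(q_1-x_2)\,g_{k_1}(x_1)$. With $p_1q_1 = m_2 r_2\tilde s(k_2)$ (the factors $e^{\mp1/\beta_2}$ cancel), the first inequality in \eqref{5.12} reads
\[ m_2\, r_2\,\tilde s(k_2)\,\frac{1}{s(k_1)}\exp\Bigl(\frac{-k_1}{1+s(k_1)}\Bigr)<1, \]
which is exactly the upper bound in \eqref{5.15}. For the second inequality in \eqref{5.12} we have $q_1-\tilde s(k_2)e^{1/\beta_2}=(r_2-1)\tilde s(k_2)e^{1/\beta_2}$. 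It therefore reads
\[ m_2\,(r_2-1)\,\tilde s(k_2)\,\frac{1}{\tilde s(k_1)}\exp\Bigl(\frac{-k_1}{1+\tilde s(k_1)}\Bigr)>1, \]
which is the lower bound in \eqref{5.15}.

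\emph{Step 2 (the symmetric computation).} The symmetric computation with $f_2$, $p_2q_2=m_1r_1\tilde s(k_1)$ and $q_2-\tilde s(k_1)e^{1/\beta_1}=(r_1-1)\tilde s(k_1)e^{1/\beta_1}$ turns \eqref{5.13} into \eqref{5.14}. This proves the ``moreover'' part.

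\emph{Step 3 (nonemptiness of the intervals).} The key identity is that, for $D=k(k-4)$, one has $1+s(k)=(k-\sqrt D)/2$ and $1+\tilde s(k)=(k+\sqrt D)/2$. Hence
\[ \frac{k}{1+s(k)}-\frac{k}{1+\tilde s(k)} = \frac{4k\sqrt D}{k^2-D}=\sqrt{k(k-4)}. \]
Using this, the interval in \eqref{5.15} is nonempty if and only if
\[ \exp\bigl(-\sqrt{k_1(k_1-4)}\bigr)<\frac{s(k_1)}{\tilde s(k_1)}\,\frac{r_2-1}{r_2}. \]
Since $r\mapsto (r-1)/r$ is increasing and $r_2\ge k_2$, this follows from the first condition in \eqref{5.11}. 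Likewise, the second condition in \eqref{5.11} makes the interval in \eqref{5.14} nonempty. Choosing any $m_1,m_2$ in these intervals gives the existence claim.

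I expect the main obstacle to be the algebra behind the identity in Step 3. Once it is in hand, everything else is direct substitution; note that the monotonicity of $g_k$ from Lemma \ref{le5.1} is not even needed for these pointwise inequalities.
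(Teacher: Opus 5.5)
Your proposal is correct and follows the paper's proof essentially step for step. You substitute $p_1q_1=m_2r_2\tilde{s}(k_2)$ and $q_1-\tilde{s}(k_2)e^{1/\beta_2}=(r_2-1)\tilde{s}(k_2)e^{1/\beta_2}$, which turns \eqref{5.12} into the two bounds of \eqref{5.15}, and symmetrically \eqref{5.13} into \eqref{5.14}; nonemptiness of the intervals then reduces to \eqref{5.11} via $(r_2-1)/r_2\ge (k_2-1)/k_2$. Your identity $\frac{k}{1+s(k)}-\frac{k}{1+\tilde{s}(k)}=\sqrt{k(k-4)}$ fills in the ``equivalently'' step that the paper leaves unverified, and you are right that the appeal to Lemma~\ref{le5.1} is not needed for these pointwise inequalities.
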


\noindent
{\bf Proof.} It should be noted that the inequalities in \eqref{5.11} are meaningful only for \(k_1, k_2 > 4\); accordingly, we assume this without further mention. 

Now fix \(j\in\{1,2\}\), and let \(x_i \in [0, \tilde{s}(k_i) e^{1/\beta_i}]\) with \(i\neq j\). 
Then, by Lemma~\ref{le5.1}, as clearly $q_j-x_i>0$, the function
\[
x_j \mapsto \frac{f_j(x)}{x_j} = p_j (q_j - x_i)\, \frac{1}{x_j} \exp\!\left(\frac{-k_j}{1+x_j}\right)
\]
is decreasing on \((0, s(k_j)) \cup (\tilde{s}(k_j), \infty)\), increasing on \((s(k_j), \tilde{s}(k_j))\), and attains a relative minimum and maximum at \(s(k_j)\) and \(\tilde{s}(k_j)\), respectively.

For \eqref{5.12} to be satisfied, it is required that
\[\frac{m_2 r_2 \tilde{s}(k_2)}{s(k_1)}\exp  \left( \frac{-k_1}{1 +s(k_1)} \right)<1\quad  \text{ and } \quad \frac{m_2 (r_2-1) \tilde{s}(k_2)}{\tilde{s}(k_1)}\exp  \left( \frac{-k_1}{1 + \tilde{s}(k_1)} \right)>1.\]

Therefore, it is sufficient to select $m_2$ fulfilling
\begin{equation*}
	\frac{\tilde{s}(k_1)}{(r_2 - 1) \tilde{s}(k_2)} \exp \left( \frac{k_1}{1 + \tilde{s}(k_1)} \right) < m_2 < \frac{s(k_1)}{r_2 \tilde{s}(k_2)} \exp \left( \frac{k_1}{1 + s(k_1)} \right).
\end{equation*}

The existence of such $m_2$ is guaranteed provided that 
\begin{equation*}
	\frac{\tilde{s}(k_1)}{(r_2 - 1) \tilde{s}(k_2)} \exp \left( \frac{k_1}{1 + \tilde{s}(k_1)} \right) < \frac{s(k_1)}{r_2 \tilde{s}(k_2)} \exp \left( \frac{k_1}{1 + s(k_1)} \right),
\end{equation*}
or, equivalently,
\begin{equation*}
	\exp\left(-\sqrt{k_1(k_1-4)}\right) < \frac{s(k_1)}{\tilde{s}(k_1)} \frac{r_2 - 1}{r_2}.
\end{equation*}
The latter estimate holds as a consequence of the first inequality in \eqref{5.11} together with the condition $k_2 \leq r_2$.

An analogous argument, accounting for the second inequality in \eqref{5.11} and $k_1 \leq r_1$, yields the existence of a constant $m_1$ fulfilling \eqref{5.13}. \qed

\begin{remark}
	Since we assume \(k_1,k_2>4\) so that \eqref{5.11} is meaningful, we immediately obtain
	\[
	\frac{k_j}{k_j-1}<\frac{4}{3}, \qquad j=1,2.
	\]
	Hence, inequality \eqref{5.11} is satisfied whenever
	\[
	\frac{s(k_j)}{\tilde{s}(k_j)}e^{\sqrt{k_j(k_j-4)}}>\frac{4}{3}, \qquad j=1,2.
	\]
	
	Consider the function \(h:[4,\infty)\to\mathbb{R}\) defined by
	\[
	h(z)=\frac{s(z)}{\tilde{s}(z)}e^{\sqrt{z(z-4)}}-\frac{4}{3}.
	\]
	One readily verifies that \(h\) is monotone increasing on its domain. Moreover,
	numerical computations show that \(h\) admits a zero \(z_0<5\). As a consequence,
	\(h(z)>0\) for all \(z>z_0\), and therefore it is sufficient to choose
	\(k_1,k_2\) sufficiently large (for instance, \(k_1,k_2>5\)) in order to ensure
	that \eqref{5.11} holds.
\end{remark}

\begin{figure}[h]
	\centering
	\hspace{-1.1cm}\includegraphics[scale=0.8]{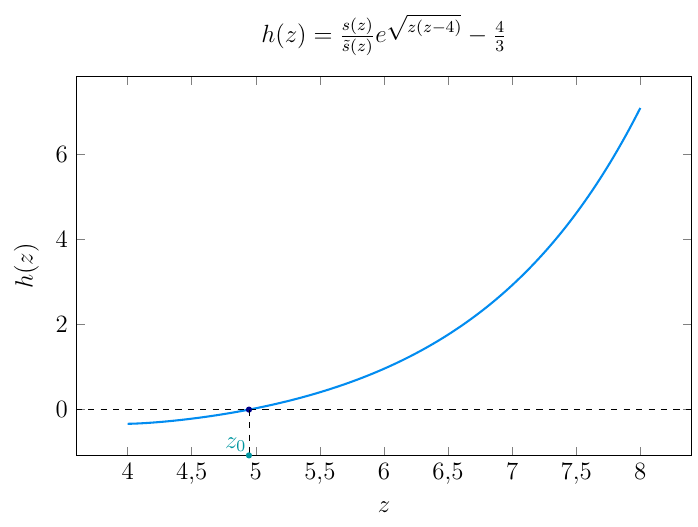}
	\vspace{-0.5cm}
	\caption{Graphical illustration of \(h\) in $[4,8]$.}
	\label{fig4}
\end{figure}

\begin{theorem}
	\label{last_th}
Under the assumptions of Proposition~\ref{prop5.1}, set $p_1 = m_2 e^{-1/\beta_2}$ and $p_2 = m_1 e^{-1/\beta_1}$, where
$m_1, m_2 \in \mathbb{R}$ satisfy \eqref{5.14} and \eqref{5.15}, respectively.
Then system~\eqref{5.9} admits at least four solutions, each with
both components nontrivial, for all $\beta_1$ and $\beta_2$ satisfying
\begin{equation}
	\label{5.16}
	\beta_1 - \beta_1 e^{-1/\beta_1}
	> \frac{\tilde{s}(k_1)}{f_1\bigl(\tilde{s}(k_1), \tilde{s}(k_2) e^{1/\beta_2}\bigr)}
	\quad \text{and} \quad
	\beta_2 - \beta_2 e^{-1/\beta_2}
	> \frac{\tilde{s}(k_2)}{f_2\bigl(\tilde{s}(k_1) e^{1/\beta_1}, \tilde{s}(k_2)\bigr)}.
\end{equation}
\end{theorem}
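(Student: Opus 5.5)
The plan is to deduce the statement from Theorem~\ref{th5.3}, in its borderline form given by Remark~\ref{re5.2}, with the constants
\[
d_j:=s(k_j),\qquad a_j:=\tilde{s}(k_j),\qquad c_j:=\tilde{s}(k_j)\,e^{1/\beta_j},\qquad j=1,2.
\]
As noted in the proof of Proposition~\ref{prop5.1}, (\ref{5.11}) forces $k_j>4$, so $0<s(k_j)<\tilde{s}(k_j)$. Hence $0<d_j<a_j$ and $a_je^{1/\beta_j}=c_j$. The equality case is allowed by Remark~\ref{re5.2} once the nonlinearities are shown to be strictly positive on $[0,c_1]\times[0,c_2]$.

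\textbf{Step 1: positivity, condition (a).} Since $q_1=r_2\tilde{s}(k_2)e^{1/\beta_2}=r_2c_2$ with $r_2\ge k_2>4>1$, we get $q_1-x_2\ge (r_2-1)c_2>0$ for $0\le x_2\le c_2$. The exponential factor is positive, so $f_1>0$ on the box, and symmetrically $f_2>0$. This gives condition (a) of Theorem~\ref{th5.3} and also justifies Remark~\ref{re5.2}.

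\textbf{Step 2: monotonicity, conditions (b) and (c).} On the box, $f_j$ is increasing in $x_j$, because $x\mapsto\exp(-k_j/(1+x))$ is increasing and $q_j-x_i>0$. It is decreasing in $x_i$, because $q_j-x_i$ is decreasing.
\begin{itemize}
\item For $0\le x_j\le d_j=s(k_j)$ and $0\le x_i\le c_i$, this gives $f_j(x_1,x_2)\le f_j$ evaluated at $x_j=s(k_j)$, $x_i=0$. The first inequalities in (\ref{5.12})--(\ref{5.13}) say this value is $<s(k_j)=d_j$, which is condition (b).
\item For $a_j\le x_j\le c_j$ and $0\le x_i\le c_i$, this gives $f_j(x_1,x_2)\ge f_j$ evaluated at $x_j=\tilde{s}(k_j)$, $x_i=c_i=\tilde{s}(k_i)e^{1/\beta_i}$. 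This is exactly the quantity in (\ref{5.16}), which gives $f_j>\tilde{s}(k_j)(\beta_j-\beta_je^{-1/\beta_j})^{-1}=a_j(\beta_j-\beta_je^{-1/\beta_j})^{-1}$. That is condition (c).
\end{itemize}
Proposition~\ref{prop5.1} ensures that such $p_1,p_2$ exist. Note that (\ref{5.12})--(\ref{5.13}) are used only through the first (``small'') inequalities. The second ones serve to make (\ref{5.16}) attainable for some $\beta_j$, but are not logically needed here.

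\textbf{Step 3: borderline case and nontriviality.} This is the step requiring care. Because $c_j=a_je^{1/\beta_j}$ rather than strictly larger, I will re-check the estimate (\ref{cond_ig}) directly. The inequality $\int_0^1 e^{-s/\beta_j}f_j\,ds> e^{-1/\beta_j}\int_0^1 f_j\,ds$ is strict, since $e^{-s/\beta_j}>e^{-1/\beta_j}$ on $[0,1)$ and $f_j>0$ by Step~1. Hence $\alpha_j(T_ju)>\frac{a_j}{c_j}\|T_ju\|_\infty$, so condition (c) of Corollary~\ref{coro4.1} holds, and that corollary yields four fixed points in $\overline{\Omega}^{\|\cdot\|_\infty}_{c_1}\times\overline{\Omega}^{\|\cdot\|_\infty}_{c_2}$.

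Finally, I upgrade ``one coexistence solution'' to ``all four''. For any such fixed point, $u_j(t)=T_ju(t)\ge T_ju(0)=\int_0^1e^{-s/\beta_j}f_j(u_1(s),u_2(s))\,ds>0$, because $G_j(t,s)$ is nondecreasing in $t$ and $f_j>0$ on the box. So every component of every one of the four solutions is strictly positive.
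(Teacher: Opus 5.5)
Your proposal is correct and follows the paper's proof. You apply Theorem~\ref{th5.3} with $d_j=s(k_j)$, $a_j=\tilde{s}(k_j)$, $c_j=\tilde{s}(k_j)e^{1/\beta_j}$, and you get (a)--(c) from the monotonicity of $f_j$ in each variable, the first inequalities of \eqref{5.12}--\eqref{5.13}, and \eqref{5.16}.

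There are two differences from the paper:
\begin{enumerate}
\item You explicitly justify the equality case $c_j=a_je^{1/\beta_j}$ through strict positivity of $f_j$ on the box. The paper relies on this tacitly, through Remark~\ref{re5.2}, while recording only $f_1\ge 0$.
\item You obtain nontriviality from $u_j(t)\ge T_ju(0)>0$. The paper instead argues directly from the equations: $\bar x\equiv 0$ forces $\bar y\equiv q_1$, which contradicts the second equation. That argument covers every solution of \eqref{5.9}, not only those lying in $\overline{\Omega}^{\|\cdot\|_\infty}_{c_1}\times\overline{\Omega}^{\|\cdot\|_\infty}_{c_2}$.
\end{enumerate}
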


\noindent
{\bf Proof.} Under the stated assumptions, Theorem~\ref{th5.3} is applicable with $d_j = s(k_j)$, $a_j = \tilde{s}(k_j)$, and $c_j = \tilde{s}(k_j) e^{1/\beta_j}$ for $j=1,2$, which ensures the existence of the desired four solutions. We check here conditions \emph{(a)--(b)--(c)} for $j=1$; the case $j=2$ is symmetric.

Recall that we are implicitly assuming $k_1,k_2>4$. As  $q_1= r_2 \, \tilde{s}(k_2)\, e^{1/\beta_2}$ with $k_2\leq r_2$ and $p_1>0$ we have
\[0\leq f_1(x_1,x_2) \quad \text{ for } 0\leq x_1\leq \tilde{s}(k_1)e^{1/\beta_1} \, 0\leq x_2\leq \tilde{s}(k_2)e^{1/\beta_2}.\]
Thus, condition \emph{(a)} in Theorem \ref{th5.3} is satisfied.

Note that, for any fixed $x_2 \in[0,\tilde{s}(k_2)e^{1/\beta_2}]$, $f_1(\cdot, x_2)$ is an increasing function on $[0,\infty)$, while for any fixed $x_1 \geq 0$, $f_1(x_1, \cdot)$ is a decreasing function on $[0,\infty)$. 

Under the assumptions,
\[
\frac{f_1(s(k_1),0)}{s(k_1)} < 1,
\]
and by the monotonicity of $f_1$ in each variable, we deduce
\[
f_1(x_1,x_2) < s(k_1), \quad 
0 \leq x_1 \leq s(k_1), \; 0 \leq x_2 \leq \tilde{s}(k_2) e^{1/\beta_2},
\]
which corresponds with condition \emph{(b)} in Theorem \ref{th5.3}.

Finally, let $\beta_1$ satisfy the first inequality in \eqref{5.16}. Such a $\beta_1$ exists since the right-hand side is always less than one and $\lim_{t \to \infty} t - te^{-1/t} = 1.$ This inequality corresponds to $f_1\bigl(\tilde{s}(k_1), \tilde{s}(k_2) e^{1/\beta_2}\bigr) >\tilde{s}(k_1) (\beta_1 - \beta_1 e^{-1/\beta_1})^{-1}$. Once again, the monotonicity of $f_1$ ensures that
\[
f_1(x_1,x_2) > \tilde{s}(k_1)(\beta_1 - \beta_1 e^{-1/\beta_1})^{-1}, \quad 
\tilde{s}(k_1) \leq x_1 \leq \tilde{s}(k_1)e^{1/\beta_1}, \; 0 \leq x_2 \leq \tilde{s}(k_2) e^{1/\beta_2}.
\]
Hence condition \emph{(c)} in Theorem \eqref{th5.3} holds.

Finally, suppose that $(\bar{x},\bar{y})$ is a solution of system \eqref{5.9} with $\bar{x}(t) = 0, \, t\in[0,1]$. Then the first equation forces $\bar{y}(t) = q_1,\, t\in[0,1]$,  which does not satisfy the second equation. A symmetric argument shows that the second component $\bar{y}$ cannot be identically zero either. Therefore, both components are necessarily nontrivial.  \qed

We conclude with an example illustrating Theorem \ref{last_th}.

\begin{example}
	The following system 
	\begin{equation}
		\begin{cases}
			\beta_1 x''(t) - x'(t) + e^{-1/\beta_2} \Big( 10(4 + \sqrt{15})e^{1/\beta_2} - y(t) \Big) \exp\Big( \frac{-8}{1 + x(t)} \Big) = 0,\quad t\in[0,1],\\[2mm]
			\beta_2 y''(t) - y'(t) + 3 e^{-1/\beta_1} \Big( 8(3 + \sqrt{8})e^{1/\beta_1} - x(t) \Big) \exp\Big( \frac{-10}{1 + y(t)} \Big) = 0,\quad t\in[0,1],\\[1mm]
			\beta_1 x'(0) - x(0) = 0, \quad x'(1) = 0,\\
			\beta_2 y'(0) - y(0) = 0, \quad y'(1) = 0
		\end{cases}
	\end{equation}
	has at least four solutions, each with both components nontrivial, provided that
	\[
	\beta_1 - \beta_1 e^{-1/\beta_1} > \frac{3+2\sqrt{2}}{9(4 + \sqrt{15})}\exp\left(\frac{4}{2+\sqrt{2}}\right), 
	\qquad
	\beta_2 - \beta_2 e^{-1/\beta_2} >\frac{4+\sqrt{15}}{21(3 + 2\sqrt{2})}\exp\left(\frac{10}{5+\sqrt{15}}\right).
	\]
	
	In this example, we take $	k_1 = 8,\, k_2 = 10,\,
	q_1 = 10 \tilde{s}(10)e^{1/\beta_2}, \, q_2 = 8 \tilde{s}(8)e^{1/\beta_1}, \, 
	p_1 = e^{-1/\beta_2} $ and $ p_2 = 3 e^{-1/\beta_1}.$
\end{example}

\section*{Acknowledgements}

The author is grateful to Professor Jorge Rodríguez-López for his guidance and insightful comments during the preparation of this article and acknowledges financial support from the Spanish Ministry of Science, Innovation and Universities (Grant FPU24/01545).

\end{document}